\def \to{\rightarrow}
\def \norm{\|}
\def \T {\mathbb{T}}
\def \E {\mathbb{E}}
\def \R {\mathbb{R}}
\def \Z {\mathbb{Z}}
\def \price{P}
\def \statespace{\T^d}
\def\1B{{\bf  1}}
\newcommand\be{\begin{equation}}
\newcommand\ee{\end{equation}}
\newcommand{\myprod}[2]
{ \left\langle #1 , #2 \right\rangle}
\def \smint {{\textstyle \int }}
\def \densities {\mathcal{D}_1(\statespace)}
\def\namedlabel#1#2{\begingroup
    #2%
    \def\@currentlabel{#2}%
    \phantomsection\label{#1}\endgroup
}
\begin{document}

\title{Schauder Estimates for a Class of Potential Mean Field Games of Controls\thanks{The first author acknowledges support from the FiME Lab (Institut Europlace de Finance).  The first two authors acknowledges support from the PGMO project ``Optimal control of conservation equations", itself supported by iCODE(IDEX Paris-Saclay) and the Hadamard Mathematics LabEx.\\ Conflict of Interest: The authors declare that they have no conflict of interest.
}
}


\author{J. Fr\'ed\'eric Bonnans \and Saeed Hadikhanloo \and Laurent Pfeiffer}


\institute{J. Fr\'ed\'eric Bonnans \at
              Inria-Saclay and Ecole Polytechnique, France \\
              \email{frederic.bonnans@inria.fr}           
           \and
           Saeed Hadikhanloo \at
              Inria-Saclay and Ecole Polytechnique, France \\
              \email{saeed.hadikhanloo@inria.fr} 
              \and
              Laurent Pfeiffer (corresponding author) \at
              University of Graz, Austria \\
              \email{laurent.pfeiffer@uni-graz.at}
}

\date{Received: date / Accepted: date}

\maketitle

\begin{abstract}
An existence result for a class of mean field games of controls is provided. In the considered model, the cost functional to be minimized by each agent involves a price depending at a given time on the controls of all agents and a congestion term. The existence of a classical solution is demonstrated with the Leray-Schauder theorem; the proof relies in particular on a priori bounds for the solution, which are obtained with the help of a potential formulation of the problem.
\keywords{Mean field games of controls \and extended mean field games \and strongly coupled mean field games \and potential formulation \and H\"older estimates.}
\subclass{91A13 \and 49N70}
\end{abstract}

\section{Introduction}

The goal of this work is to prove the existence and uniqueness of a classical solution to
the following system of partial differential equations:
\begin{equation}\label{MFGC}
\left\lbrace 
\begin{array}{l l l}
(i) \quad &-\partial_t u -\sigma \Delta u + H(x,t, \nabla u(x,t) +
            \phi(x,t)^\intercal \price (t) )  \quad & \\[5pt]
             & \qquad \qquad \qquad \qquad \qquad \qquad \qquad \qquad = f(x,t,m(t))\quad &(x,t) \in Q, \\[5pt]
(ii) \quad &\partial_t m - \sigma \Delta m + \mathrm{div} (v m)= 0 \quad &(x,t) \in Q, \\[5pt]
(iii) \quad &\price(t) = \Psi \left(t, \int_{ \statespace} \phi(x,t) v(x,t) m(x,t) \; \mathrm{d} x \right) \quad &t \in [0,T],\\[5pt]
(iv) \quad & v(x,t) = - H_p(x,t, \nabla u (x,t) + \phi(x,t)^\intercal \price(t) )  &(x,t) \in Q, \\[5pt]
(v) \quad &m(x,0)= m_0(x), \quad u(x,T) = g(x) \quad & x \in \statespace,\\[5pt]
\end{array} 
\tag{MFGC} \right.
\end{equation}
where $u=u(x,t) \in \R$, $m=m(x,t) \in \R$, $v=v(x,t) \in \R^d$, $P= P(t) \in \R^k$,
with $(x,t) \in Q:=\statespace \times [0,T]$. The parameters $T>0$,
$\sigma >0$ are given and
\begin{align*}
\begin{array}{ll}
H \colon (x,t,p) \in Q \times \R^d \to \R, \qquad
& \Psi \colon (t,z) \in [0,T] \times \R^k \to \R^k, \\[5pt]
\phi \colon (x,t) \in Q \to \R^{k \times d},
& f \colon (x,t,m) \in Q \times \mathcal{D}_1(\statespace)\to \R, \\[5pt]
m_0 \in \densities,
& g \colon x \in \statespace\to \R
\end{array}
\end{align*}
are given data. The set $\mathcal{D}_1(\statespace)$ is defined as
\begin{equation} \label{eq:densities}
\mathcal{D}_1(\statespace)= \Big\{ m \in L^\infty(\statespace) \,|\, m \geq 0,\, \int_{\statespace} m(x) \; \mathrm{d} x =1 \Big\}.
\end{equation}
We work with $\Z^d$-periodic data and we set the
state set as the $d$-dimensional torus $\T^d$, that is a quotient set
$\R^d / \Z^d$. The Hamiltonian $H$ is assumed to be such that
$H(x,t,p)= L^*(x,t,-p)$, for some mapping $L$,
where $L^*(x,t,p)$
denotes the Fenchel transform with respect to $p$:
$$
  H(x,t,p) := \sup_{v \in \R^d } -\langle p,v\rangle - L(x,t,v).
$$
The mapping $L$ is assumed to be convex in its third variable.

The function $u$, as a solution to the
Hamilton-Jacobi-Bellman (HJB) in equation $(i)$\eqref{MFGC}
is the value function corresponding to the stochastic optimal control problem:
\begin{align}
 u(x,t)= \inf_\alpha \, \E \Big[ & \int_{t}^{T}
{L(X_s ,s, \alpha_s)} + \langle \phi(X_s , s)^\intercal P (s),\alpha_s
\rangle \; \mathrm{d} s \notag \\
& \qquad + \int_t^T f(X_s,s,m(s)) \; \mathrm{d} s + g(X_T)  \Big], \label{cost}
\end{align}
subject to the stochastic dynamics $\mathrm{d} X_s = \alpha_s \, \mathrm{d} s + \sqrt{2\sigma} \, \mathrm{d} B_s , \; X_t = x \in \statespace$. The feedback law $v$ given by $(iv)$\eqref{MFGC} is then optimal for this stochastic optimal control problem. Equation $(ii)\eqref{MFGC}$ is the Fokker-Planck equation which describes the evolution of the distribution $m(t)$ of the agents, when the optimal feedback law is employed.
At last, $(iii)\eqref{MFGC}$ makes the quantity $\price(t)$ endogenous. 

An interpretation of the system \eqref{MFGC} is as follows. Consider a
stock trading market. A typical trader, with an initial level of stock
$X_0=x$, controls its level of stock $(X_t)_{ t \in [0,T]}$ through
the purchasing rate $\alpha_t$ with stochastic dynamic $\mathrm{d} X_t =
\alpha_t \mathrm{d}  t+ \sqrt{2 \sigma} \mathrm{d} B_t$. The agent aims at minimizing
the expected cost \eqref{cost} where $\price(t)$ is the price of the
stock at time $t$. The agent is considered to be infinitesimal and has
no impact on $\price(t)$, so it assumes the price as given in its
optimization problem. On the other hand, in the equilibrium
configuration, the price $\price(t) \; (t \in [0,T])$ becomes
endogenous and indeed, is a function of the optimal behaviour of the
whole population of agents as formulated in $(iii)\eqref{MFGC}$.
The expression $D(t) := \int_{ \statespace} \phi(x,t) v(x,t) m(x,t) \;
\mathrm{d} x$ can be considered as a weighted net
demand formulation and the relation $P = \Psi (D)$ is the result of
supply-demand relation which determines the price of the good at the
market.
Concerning the role of the mapping $\phi$, one can think for example to the case of two exchangeable goods,
    i.e. $x \in \R^2$, with a price given by
  $P(t)= \Psi( \int_{\statespace} (\phi_1(x,t)v_1(x,t)+\phi_2(x,t) v_2(x,t)) m(x,t) \, \text{d} x )$, where $\Psi \colon
  \R \rightarrow \R$. The use of a mapping $\phi$, which is
  valued in $\R^{1 \times 2}$ and whose values depend on the scale
  chosen for the goods, is in such a situation necessary.
Thus, the system \eqref{MFGC} captures an equilibrium configuration. Similar models have been proposed in the electrical engineering literature, see for example \cite{ABTM18,CPTD12,PAS16} and the references therein.

In most mean field game models, the individual players interact through their position only, that is, via the variable $m$. The problem that we consider belongs to the more general class of problems, called \emph{extended mean field games}, for which the players interact through the joint probability distribution $\mu$ of states and controls.
Several existence results have been obtained for such models: in \cite{GPV14} for stationary mean field games, in \cite{GV16} for deterministic mean field games. In \cite[Section 5]{CL17}, a class of problems where $\mu$ enters in the drift and the integral cost of the agents is considered. We adopt the terminology \emph{mean field games of controls} employed by the authors of the latter reference. 
Let us mention that our existence proof is different from the one of \cite{CL17}, which includes control bounds.
In \cite[Section 1]{BLL18}, a model where the drift of the players depends on $\mu$ is analyzed.
In \cite{GS18}, a mean field game model is considered where at all time $t$, the average control (with respect to all players) is prescribed. 
We finally mention that extended mean field games have been studied with a probabilistic approach in \cite{ABVC18,CL15} and in \cite[Section 4.6]{CD18}, and that a class of linear-quadratic extended mean field games has been analyzed in \cite{PW18}.

A difficulty in the study of mean field games of controls, directly related to the supply-demand relation mentioned above, is the fact that the control variable, at a given time $t$, cannot be expressed in an explicit fashion as a function of $m(\cdot,t)$ and $u(\cdot,t)$. Instead, one has to analyze the well-posedness and the stability of a fixed point equation (see for example \cite[Lemma 5.2]{CL17}).
In our model, if we combine $(iii)$ and $(iv)$\eqref{MFGC}, we obtain the fixed point equation
\begin{equation} \label{eq:fixed_point}
v= -H_p(\nabla u + \Psi(\smallint \phi v m ))
\end{equation}
for the control variable $v$. A central idea of the present article is the following: equation \eqref{eq:fixed_point} is equivalent to the optimality conditions of a convex optimization problem, when $L$ is convex and $\Psi$ is the gradient of a convex function $\Phi$. This observation allows to show the existence and uniqueness of a solution $v$ (to equation \eqref{eq:fixed_point}) and to investigate its dependence with respect to $\nabla u$ and $m$ in a natural way. More precisely, we prove that this dependence is locally H\"older continuous.

The existence of a classical solution of \eqref{MFGC} is established with the Leray-Schauder theorem and classical estimates for parabolic equations. A similar approach has been employed in \cite{graber2015existence}, \cite{graber2017variational}, and \cite{graber2019} for the analysis of a mean field game problem proposed by Chan and Sircar in \cite{chan2017fracking}. In this model, each agent exploits an exhaustible resource and fixes its price. The evolution of the capacity of a given producer depends on the price set by the producer, but also on the average price (with respect to all producers).

The application of the Leray-Schauder theorem relies on a priori
bounds for fixed points. These bounds are obtained in particular with
a potential formulation of the mean field game problem: we prove that
all solutions to \eqref{MFGC} are also solutions to an optimal control
problem of the Fokker-Planck equation. We are not aware of any other
publication making use of such a potential formulation for a mean
field game of controls, with the exception of
\cite{graber2017variational} for the Chan and Sircar model. Let us
mention that besides the derivation of a priori bounds, the potential
formulation of the problem can be very helpful for the numerical
resolution of the problem and the analysis of learning procedures
(which are out of the scope of the present work).

The article is structured as follows. We list in Section \ref{section:assumption} the assumptions employed all along. The main result (Theorem \ref{theo:main}) is stated in Section \ref{section:result}. We provide in Section \ref{section:potential} a first incomplete potential formulation of the problem, incomplete in so far as the term $f(m)$ is not integrated.
We also introduce some auxiliary mappings, which allow to express $P$ and $v$ as functions of $m$ and $u$. We give some regularity properties for these mappings in Section \ref{section:auxiliary}. In Section \ref{section:apriori} we establish some a priori bounds for solutions to the coupled system. We prove our main result in Section \ref{section:leray-schauder}. In Section \ref{section:duality}, we give a full potential formulation of the problem, prove the uniqueness of the solution to \eqref{MFGC} and prove that $(u,P,f(m))$ is the solution to an optimal control problem of the HJB equation, under an additional monotonicity condition on $f$. Some parabolic estimates, used all along the article, are provided and proved in the appendix.

\section{Assumptions on data} \label{section:assumption}


Let us introduce the main notation used in the article.
Recall that $\densities$ was defined in \eqref{eq:densities}.
For all $m \in \densities$, for all measurable functions $v \colon \statespace \rightarrow \R^d$ such that $|v(\cdot)|^2 m(\cdot)$ is integrable, the following inequality holds true,
\begin{equation} \label{eq:jensen}
\Big| \int_{\statespace} v(x) m(x) \; \mathrm{d} x \Big|^2 \leq \int_{\statespace} |v(x)|^2 m(x) \; \mathrm{d} x,
\end{equation}
by the Cauchy-Schwarz inequality.

The gradient of the data functions with respect to some variable is denoted with an index, for example, $H_p$ denotes the gradient of $H$ with respect to $p$. The same notation is used for the Hessian matrix. The gradient of $u$ with respect to $x$ is denoted by $\nabla u$. Let us mention that very often, the variables $x$ and $t$ are omitted, to alleviate the calculations. We also denote by $\int \phi v m$ the integral $\int_{\statespace} \phi v m \; \mathrm{d} x$ when used as a second argument of $\Psi$. For a given normed space $X$, the ball of center 0 and radius $R$ is denoted $B(X,R)$.

Along the article, we use the following H\"older spaces:
$\mathcal{C}^\alpha(Q)$, $\mathcal{C}^{2+ \alpha}(\statespace)$, and
$\mathcal{C}^{2+ \alpha,1+\alpha/2}(Q)$, defined as usual with $\alpha
\in (0,1)$. Sobolev spaces are denoted by $W^{k,p}$, the order of
derivation $k$ being possibly non-integral (see their definition in \cite[section II.2]{LSU}).
We fix now a real number $p$ such that
\begin{equation*}
p > d+2.
\end{equation*}
We will also make use of the following Banach space:
\begin{equation*}
W^{2,1,p}(Q)= L^p(0,T;W^{2,p}(\statespace)) \cap W^{1,p}(Q).
\end{equation*}

\paragraph{Convexity assumptions}

We collect below the required assumptions on the data. As announced in the introduction, $H$ is related to the convex conjugate of a mapping $L \colon Q \times \R^d \rightarrow \R$ as follows:
\begin{equation} \label{eq:conjugate}
H(x,t,p)= L^*(x,t,-p)= \sup_{v \in \R^d} - \langle p,v \rangle - L(x,t,v).
\end{equation}
The mapping $L$ is assumed to be strongly convex in its third variable, uniformly in $x$ and $t$, that is, we assume that $L$ is differentiable with respect to $v$ and that there exists $C> 0$ such that
\begin{equation*}
\label{eq:grad_monotony} \tag{A1}
\langle L_v(x,t,v_2) -L_v(x,t,v_1), v_2 -v_1 \rangle \geq \frac{1}{C} |v_2- v_1|^2,
\end{equation*}
for all $(x,t) \in Q$ and for all $v_1$ and $v_2 \in \R^d$. This ensures that $H$ takes finite values and that $H$ is continuously differentiable with respect to $p$, as can be easily checked. Moreover, the supremum in \eqref{eq:conjugate} is reached for a unique $v$, which is then given by $v= -H_p(x,t,p)$, i.e.
\begin{equation} \label{eq:v_eq_minus_Hp}
H(x,t,p)+ L(x,t,v) + \langle p,v \rangle = 0
\Longleftrightarrow
v= -H_p(x,t,p),
\end{equation}
for all $(x,t,p,v) \in Q \times \R^d \times \R^d$.

We also assume that $\Psi$ has a potential, that is, there exists a mapping $\Phi \colon [0,T] \times \R^k \rightarrow \R$, differentiable in its second argument, such that
\begin{equation} \label{eq:Phi_convex}
\Psi(t,z)= \Phi_z(t,z), \quad \forall (t,z) \in [0,T] \times \R^k.
\end{equation}

\paragraph{Regularity assumptions}

We assume that $L_v$ is differentiable with respect to $x$ and $v$ and that $\phi$ is differentiable with respect to $x$.
All along the article, we make use of the following assumptions. \vspace{0.2mm}

\noindent \emph{Growth assumptions} There exists $C>0$ such that for all $(x,t) \in Q$, $y \in \statespace$, $v \in \R^d$, $z \in \R^k$, and $m \in \densities$,
\begin{align*}
& \bullet \quad L(x,t,v) \leq C |v|^2 + C \label{ass_L_quad_growth2} \tag{A2} \\
& \bullet \quad |L(y,t,v)-L(x,t,v)| \leq C |y-x| (1 + |v|^2) \label{ass_L_Lipschitz} \tag{A3} \\
& \bullet \quad |\Psi(t,z)| \leq C |z| + C \label{ass_Psi_lin_growth} \tag{A4} \\
& \bullet \quad |f(x,t,m)| \leq C. \label{ass_f_bounded} \tag{A5} \\
\intertext{\emph{H\"older continuity assumptions}}
& \bullet \quad \text{For all $R>0$, there exists $\alpha \in (0,1)$ such that} \\
& \qquad
\begin{cases}
\begin{array}{rl}
L \in & \mathcal{C}^{\alpha}(B_R), \\
L_v \in & \mathcal{C}^{\alpha}(B_R, \R^d), \\
L_{vx} \in & \mathcal{C}^{\alpha}(B_R, \R^{d \times d}), \\
L_{vv} \in &  \mathcal{C}^{\alpha}(B_R, \R^{d \times d}),
\end{array}
\end{cases}
\quad
\begin{cases}
\begin{array}{rl}
\Psi \in & \mathcal{C}^{\alpha}(B_R',\R^d), \\
\phi \in & \mathcal{C}^{\alpha}(Q,\R^{k \times d}), \\
  D_x \phi \in & \mathcal{C}^{\alpha}(Q,\R^{k \times d \times d}),
\end{array}
\end{cases} \qquad \label{ass_holder} \tag{A6} \\[0.5em]
& \phantom{\bullet} \quad \ \, \text{where $B_R= Q \times B(\R^d,R)$ and $B_R'= [0,T] \times B(\R^k,R)$.} \\
& \bullet \quad \text{There exists $\alpha \in (0,1)$ and $C>0$ such that} \\
& \qquad \qquad | f(x_2,t_2,m_2) - f(x_1,t_1,m_1) | \\
& \qquad \qquad \qquad \leq C \big( |x_2 - x_1| + |t_2 - t_1|^\alpha + \| m_2 - m_1 \|_{L^{\infty}(\statespace)}^\alpha \big), \label{ass_hold_f} \tag{A7} \\
& \phantom{\bullet} \quad \ \, \text{for all $(x_1,t_1)$ and $(x_2,t_2) \in Q$ and for all $m_1$ and $m_2 \in \densities$.} \\
& \bullet \quad \text{There exists $\alpha \in (0,1)$ such that $m_0 \in \mathcal{C}^{2+ \alpha}(\statespace)$, $g \in \mathcal{C}^{2 +\alpha}(\statespace)$}. \label{ass_init_cond} \tag{A8}
\end{align*}

Let us mention here that the variables $C>0$ and $\alpha \in (0,1)$ used all along the article are generic constants. The value of $C$ may increase from an inequality to the next one and the value of the exponent $\alpha$ may decrease.

Some lower bounds for $L$ and for $\Phi$ can be easily deduced from
the convexity assumptions. By assumption \eqref{ass_holder},
$L(x,t,0)$ and $L_v(x,t,0)$ are bounded. It follows then from the
strong convexity assumption \eqref{eq:grad_monotony}
that there exists a constant $C> 0$ such that
\begin{equation} \label{L_quad_growth1}
  \frac{1}{C} |v|^2 - C \leq L(x,t,v), \quad
  \text{ for all }(x,t,v) \in Q \times \R^d. 
  \quad 
\end{equation}
Without loss of generality, we can assume that $\Phi(t,0)=0$, for all
$t \in [0,T]$. Since $\Phi$ is convex, we have that $\Phi(t,z) \geq
\langle \Psi(t,0), z \rangle$, for all $z \in \R^k$. We deduce then
from assumption \eqref{ass_Psi_lin_growth} that
\begin{equation} \label{eq:phi_linear_growth}
\Phi(t,z) \geq -C | z |, \quad \text{for all $z \in \R^k$},
\end{equation}
where $C$ is independent of $t$ and $z$.

Some regularity properties for the Hamiltonian can be deduced from the convexity assumption \eqref{eq:grad_monotony} and the H\"older continuity of $L$ and its derivatives (assumption \eqref{ass_holder}). They are collected in the following lemma.

\begin{lemma} \label{lemma:reg_H}
The Hamiltonian $H$ is differentiable with respect to $p$ and $H_p$ is differentiable with respect to $x$ and $p$. Moreover, for all $R>0$, there exists $\alpha \in (0,1)$ such that $H \in \mathcal{C}^{\alpha}(B_R)$, $H_p \in \mathcal{C}^{\alpha}(B_R, \R^d)$, $H_{px} \in \mathcal{C}^{\alpha}(B_R, \R^{d \times d})$, and $H_{pp} \in \mathcal{C}^{\alpha}(B_R, \R^{d \times d})$
\end{lemma}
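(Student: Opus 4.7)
The strategy is to parametrize the Hamiltonian via the maximizer $v^\ast(x,t,p) := \argmax_{v \in \R^d}\bigl\{-\langle p,v\rangle - L(x,t,v)\bigr\}$. By the strong convexity assumption \eqref{eq:grad_monotony}, the supremum defining $H$ is uniquely attained, $v^\ast$ is characterized by the first-order condition $L_v(x,t,v^\ast) = -p$, and the envelope identity $H_p(x,t,p) = -v^\ast(x,t,p)$ already recorded in \eqref{eq:v_eq_minus_Hp} holds. All regularity properties of $H$ will then be transferred from regularity properties of $v^\ast$.

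First I would establish a linear a priori bound $|v^\ast(x,t,p)| \leq C(1+|p|)$. Pairing the identity $L_v(x,t,v^\ast) - L_v(x,t,0) = -p - L_v(x,t,0)$ with $v^\ast$ and invoking \eqref{eq:grad_monotony} yields $|v^\ast|^2/C \leq |v^\ast|\,(|p| + \|L_v(\cdot,\cdot,0)\|_\infty)$, where the latter norm is finite because $Q$ is compact and $L_v \in \C^\alpha(B_R,\R^d)$ by \eqref{ass_holder}. Next I would establish local H\"older continuity of $v^\ast$. For two evaluation points, applying \eqref{eq:grad_monotony} at the frozen arguments $(x_1,t_1)$ and using $L_v(x_i,t_i,v_i^\ast) = -p_i$ gives
\begin{equation*}
\tfrac{1}{C}|v_2^\ast - v_1^\ast|^2 \leq \bigl\langle \bigl(L_v(x_1,t_1,v_2^\ast) - L_v(x_2,t_2,v_2^\ast)\bigr) + (p_1 - p_2),\, v_2^\ast - v_1^\ast\bigr\rangle,
\end{equation*}
so $|v_2^\ast - v_1^\ast| \leq C\bigl(|L_v(x_1,t_1,v_2^\ast) - L_v(x_2,t_2,v_2^\ast)| + |p_2 - p_1|\bigr)$. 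Combining with the a priori bound, the local H\"older regularity of $L_v$ from \eqref{ass_holder} gives local H\"older continuity of $v^\ast$ in all of $(x,t,p)$. Since $H(x,t,p) = -\langle p,v^\ast\rangle - L(x,t,v^\ast)$ and $H_p = -v^\ast$, this already yields the claimed local H\"older regularity of $H$ and $H_p$.

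For $H_{pp}$ and $H_{px}$ I would apply the implicit function theorem to the equation $F(x,t,p,v) := L_v(x,t,v) + p = 0$. By \eqref{eq:grad_monotony} the Jacobian $F_v = L_{vv}(x,t,v)$ is symmetric and satisfies $L_{vv} \succeq \tfrac{1}{C}\mathrm{Id}$ uniformly, hence is invertible with $\|L_{vv}^{-1}\| \leq C$. Thanks to \eqref{ass_holder}, $L_{vv}$ and $L_{vx}$ exist and are locally H\"older. The implicit function theorem gives that $v^\ast$ is continuously differentiable in $x$ and $p$ with
\begin{equation*}
v^\ast_p(x,t,p) = -L_{vv}(x,t,v^\ast)^{-1}, \qquad v^\ast_x(x,t,p) = -L_{vv}(x,t,v^\ast)^{-1} L_{vx}(x,t,v^\ast),
\end{equation*}
and differentiating the envelope identity yields $H_{pp} = L_{vv}(x,t,v^\ast)^{-1}$ and $H_{px} = L_{vv}(x,t,v^\ast)^{-1} L_{vx}(x,t,v^\ast)$. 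Local H\"older continuity of these expressions then follows by composing the H\"older mapping $(x,t,p) \mapsto v^\ast$ with the H\"older mappings $L_{vv}$, $L_{vx}$ and with the matrix inversion map, which is smooth on the open cone of positive-definite matrices on which $L_{vv}$ takes its values with uniform lower bound.

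The only genuine obstacle is careful bookkeeping: one must verify that on each bounded set $B_R$ the range of $v^\ast$ stays in a single bounded set $B(\R^d,R')$ — which is exactly what the a priori bound provides — so that the various local H\"older exponents furnished by \eqref{ass_holder} can be combined (by taking their minimum) into a single exponent $\alpha \in (0,1)$ valid for all four functions $H,\,H_p,\,H_{px},\,H_{pp}$ on $B_R$.
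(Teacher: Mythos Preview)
Your proposal is correct and follows essentially the same approach as the paper: introduce the maximizer $v^\ast(x,t,p)$, bound it linearly in $p$, derive its H\"older continuity from the strong convexity \eqref{eq:grad_monotony} and the first-order condition, then differentiate the relation $L_v(x,t,v^\ast)+p=0$ to obtain $H_{pp}=L_{vv}^{-1}$ and $H_{px}=L_{vv}^{-1}L_{vx}$, whose H\"older regularity follows by composition. The only cosmetic differences are that the paper invokes the inverse (rather than implicit) function theorem and routes the H\"older estimate for $v^\ast$ through Young's inequality instead of the direct Cauchy--Schwarz bound you use.
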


\begin{proof}
For a given $(x,t,p) \in Q \times \R^d$, there exists a unique $v:= v(x,t,p)$ maximizing the function $v \in \R^d \mapsto -\langle p,v \rangle - L(x,t,v)$, which is strongly concave by \eqref{eq:grad_monotony}. It is then easy to deduce from \eqref{L_quad_growth1} and the boundedness of $L(x,t,0)$ that there exists a constant $C$, independent of $(x,t,p)$, such that
$|v(x,t,p)| \leq C( |p| +1)$.
For all $(x,t,p) \in Q \times \R^d$, we have
\begin{equation} \label{eq:easy_opti_cond}
p + L_v(x,t,v(x,t,p))= 0.
\end{equation}
Since $L_v$ is continuously differentiable with respect to $x$ and $v$, we obtain with the inverse mapping theorem that $v(x,t,p)$ is continuously differentiable with respect to $x$ and $p$.
Let $R > 0$ and let $(x_1,t_1,p_1)$ and $(x_2,t_2,p_2) \in Q \times B_R$. Let $v_i= v(x_i,t_i,p_i)$ for $i=1,2$. We have $|v_i| \leq C$, where $C$ does not depend on $x_i$, $t_i$, and $p_i$ (but depends on $R$).
Moreover, we have
\begin{align*}
& \langle p_2 - p_1, v_2 -v_1 \rangle + \langle L_v(x_2,t_2,v_2)-L_v(x_1,t_1,v_2), v_2 - v_1 \rangle \\
& \qquad \qquad \qquad \qquad \qquad + \langle L_v(x_1,t_1,v_2)-L_v(x_1,t_1,v_1), v_2- v_1 \rangle = 0.
\end{align*}
We deduce from \eqref{eq:grad_monotony}, Young's inequality, and \eqref{ass_holder} that there exists $C>0$ and $\alpha \in (0,1)$, both independent of $x_i$, $t_i$, and $p_i$ such that 
\begin{align*}
\frac{1}{C} |v_2 - v_1|^2
\leq \ & | \langle p_2 - p_1, v_2 -v_1 \rangle | + | \langle L_v(x_2,t_2,v_2)-L_v(x_1,t_1,v_2), v_2 - v_1 \rangle | \\
\leq \ & \frac{1}{2 \varepsilon} |p_2 - p_1|^2 + \varepsilon |v_2 - v_1|^2 + \frac{C}{\varepsilon} \big( |x_2 - x_1|^\alpha + |t_2 - t_1|^\alpha \big),
\end{align*}
for all $\varepsilon > 0$. Taking $\varepsilon = \frac{1}{2C}$, we deduce that the mapping $(x,t,p) \in B_R \mapsto v(x,t,p)$ is H\"older continuous. Since $L$ is H\"older continuous on bounded sets, we obtain that the Hamiltonian $H(x,t,p)= -\langle p,v(x,t,p) \rangle - L(x,t,v(x,t,p))$ is H\"older continuous on $B_R$.

One can easily check that $H_p(x,t,p)= -v(x,t,p)$, which proves that $H_p$ is H\"older continuous on $B_R$. Finally, differentiating relation \eqref{eq:easy_opti_cond} with respect to $x$ and $p$, we obtain that
\begin{align*}
D_x v(x,t,p)= \ & - L_{vv}(x,t,v(x,t,p))^{-1} L_{vx}(x,t,v(x,t,p)) \\
D_p v(x,t,p)= \ & - L_{vv}(x,t,v(x,t,p))^{-1}.
\end{align*}
We deduce then with assumption \eqref{ass_holder} that $D_x v(x,t,p)$ and $D_p v(x,t,p)$ (and thus $H_{px}$ and $H_{pp}$) are H\"older continuous on $B_R$, as was to be proved.
\qed \end{proof}

\paragraph{An example of coupling term} 

We finish this section with an example of a mapping $f$ satisfying the regularity assumptions \eqref{ass_f_bounded} and \eqref{ass_hold_f}.
Let $\varphi \in L^\infty(\R^d)$
be a given Lipschitz continuous mapping, with modulus $C_1$.
Let us set 
  $C_2= \| \varphi \|_{L^\infty(\R^d)}$.
Let $K \colon Q \times [-C_2,C_2] \rightarrow \R$ be a measurable mapping satisfying the following assumptions:
\begin{enumerate}
\item The mapping $x \in \statespace \mapsto K(x,0,0)$ lies in $L^1(\statespace)$.
\item There exist a mapping $C_3 \in L^1(\statespace)$ and $\alpha \in (0,1)$ such that for a.e.\@ $x \in \statespace$, for all $t_1$ and $t_2 \in [0,T]$ and for all $w_1$ and $w_2 \in [-C_2,C_2]$,
\begin{equation*}
|K(x,t_2,w_2)-K(x,t_1,w_1)|
\leq C_3(x) \big( |t_2-t_1|^\alpha + |w_2 - w_1|^\alpha \big).
\end{equation*}
\end{enumerate}
Let us set $\tilde{\varphi}(x) := \varphi(-x)$.
We identify $m\in L^\infty(\statespace)$ with its extension by 0 over
$\R^d$ 
so that the convolution product below is well-defined:
\be
m*\varphi (x) :=\int_{\R^d} m(x-y) \varphi(y) \; \mathrm{d} y,
\;\; x \in\statespace.
\ee
We keep in mind that
$m*\varphi$ is a function over $\statespace$.
Then
\be
\label{ets-m-varphi}
\| m*\varphi \|_{L^\infty(\statespace)} \leq \| \varphi \|_{L^\infty(\statespace)} = C_2,
\quad \text{for all $m\in  \densities$.}
\ee
In a similar way we can define
\be
f_K(x,t,m)= (K(\cdot,t,m * \varphi(\cdot)) * \tilde{\varphi})(x),
\ee
and we have that
\begin{align}
& \| f_K(x,t,m) \|_{L^\infty(\statespace)}
\leq \| K(\cdot,t,m * \varphi) \|_{L^1(\statespace)} \|\tilde{\varphi} \|_{L^\infty(\statespace)} \notag \\
& \quad \leq ( \| K(\cdot,0,0) \|_{L^1(\statespace)} + \| C_3 \|_{L^1(\statespace)}(T^\alpha + \| {\varphi} \|_{L^\infty(\statespace)}) ) \| \tilde{\varphi} \|_{L^\infty(\statespace)}. \label{lemma:ex_f-1}
\end{align}
The specific structure of $f_K$ is actually motivated by the fact that under an additional monotonicity assumption, $f_K$ derives from a potential (as proved in \cite[Example 1.1]{Cdga13}). For the moment, we have the following regularity result.

\begin{lemma} \label{lemma:ex_f}
The above mapping $f_K$
satisfies assumptions \eqref{ass_f_bounded} and \eqref{ass_hold_f}.
\end{lemma}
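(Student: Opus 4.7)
The plan is to verify the two assumptions by splitting the increment of $f_K$ into an $x$-part, a $t$-part, and an $m$-part, and for each of them reducing to the hypotheses on $K$, $\varphi$ and $\tilde\varphi$. In every step, the convolutions should be understood as integrals over $\statespace$ (a fundamental domain of volume one), thanks to the extension-by-$0$ convention that makes $m$ and $K(\cdot,t,m*\varphi(\cdot))$ compactly supported in $\R^d$.

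First I would handle assumption \eqref{ass_f_bounded}. This is essentially done in \eqref{lemma:ex_f-1}: using condition~2 on $K$ with $(t_1,w_1)=(0,0)$, one controls $\|K(\cdot,t,m*\varphi)\|_{L^1(\statespace)}$ by $\|K(\cdot,0,0)\|_{L^1}+\|C_3\|_{L^1}(T^\alpha+C_2^\alpha)$ using \eqref{ets-m-varphi}, and then Young's convolution inequality against $\tilde\varphi\in L^\infty$ yields the required uniform $L^\infty$ bound, independent of $(x,t,m)$.

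For assumption \eqref{ass_hold_f} I would write
\begin{align*}
f_K(x_2,t_2,m_2)-f_K(x_1,t_1,m_1)
= & \ [f_K(x_2,t_2,m_2)-f_K(x_2,t_2,m_1)] \\
& +[f_K(x_2,t_2,m_1)-f_K(x_2,t_1,m_1)] \\
& +[f_K(x_2,t_1,m_1)-f_K(x_1,t_1,m_1)]
\end{align*}
and estimate each bracket separately. The $x$-difference uses only the Lipschitz continuity of $\tilde\varphi$: one pulls the difference inside the convolution, applies $|\tilde\varphi(x_2-z)-\tilde\varphi(x_1-z)|\le C_1|x_2-x_1|$, and bounds the remaining $L^1$ norm of $K(\cdot,t_2,m_1*\varphi)$ as in the first step. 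The $t$-difference uses condition~2 of $K$ at fixed $w$ to obtain an integrand bounded by $C_3(z)|t_2-t_1|^\alpha\|\tilde\varphi\|_{L^\infty}$, which integrates to $\|C_3\|_{L^1}\|\tilde\varphi\|_{L^\infty}|t_2-t_1|^\alpha$.

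The only step requiring a small preliminary computation is the $m$-difference. Here I would first note that, because $m_1,m_2$ are extended by $0$ outside a fundamental domain of measure one, the elementary estimate
\begin{equation*}
\|m_2*\varphi-m_1*\varphi\|_{L^\infty(\statespace)}\le \|\varphi\|_{L^\infty}\|m_2-m_1\|_{L^\infty(\statespace)}
\end{equation*}
holds. Applying condition~2 of $K$ in the third variable with $w_i=m_i*\varphi(z)$ then gives an integrand bounded by $C_3(z)\|\varphi\|_{L^\infty}^\alpha\|m_2-m_1\|_{L^\infty}^\alpha\|\tilde\varphi\|_{L^\infty}$, and integrating over $\statespace$ yields the desired bound. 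Summing the three contributions and taking a common H\"older exponent (the smallest of the two involved) delivers \eqref{ass_hold_f}. I do not anticipate any real obstacle beyond being careful with the torus/$\R^d$ extension when bounding $m_2*\varphi-m_1*\varphi$ in $L^\infty$; all other estimates are direct consequences of the assumptions.
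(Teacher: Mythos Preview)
Your proof is correct and follows essentially the same approach as the paper. The only cosmetic difference is that the paper groups the $t$- and $m$-increments into a single term (applying condition~2 on $K$ once with both $t$ and $w$ varying), whereas you split them into two separate brackets; the ingredients---Lipschitz continuity of $\tilde\varphi$ for the $x$-part, condition~2 on $K$ for the $(t,m)$-part, and the bound $\|(m_2-m_1)*\varphi\|_{L^\infty}\le\|\varphi\|_{L^\infty}\|m_2-m_1\|_{L^\infty}$---are identical. (One minor slip: in your $x$-bracket the kernel should carry the index $t_1$, not $t_2$, given your chosen decomposition.)
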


\begin{proof}
  Assumption \eqref{ass_f_bounded}
  follows from \eqref{lemma:ex_f-1}.
  We next prove \eqref{ass_hold_f}.
  Let $(x_1,t_1)$ and $(x_2,t_2) \in Q$, let
  $m_1$ and $m_2 \in \densities$.
  Then
\begin{align*}
& |f_K(x_2,t_2,m_2)-f_K(x_1,t_2,m_2)| \\
& \qquad \leq
           \|K(\cdot,t_2,m_2 * \varphi(\cdot)) \|_{L^\infty(\statespace)}
           \|\varphi (x_2-\cdot)- \varphi(x_1-\cdot) \|_{L^\infty(\statespace)} \\
& \qquad \leq C_1 C | x_2 - x_1 |.
\end{align*}
Also, 
\begin{align*}
& |f_K(x_1,t_2,m_2)-f_K(x_1,t_1,m_1)| \\
& \qquad \leq \|K(\cdot,t_2,m_2 * \varphi(\cdot))-K(\cdot,t_1,m_1 * \varphi(\cdot)) \|_{L^1(\statespace)}
        \|\varphi\|_{L^\infty(\statespace)}\\
& \qquad \leq C_2 \| C_3 \|_{L^1(\statespace)} \big( |t_2-t_1|^\alpha + \| (m_2 - m_1) * \varphi \|_{L^\infty(\statespace)}^\alpha \big).
\end{align*}
Finally, we have $\| (m_2 - m_1) * \varphi \|_{L^\infty(\statespace)} \leq \| m_2 - m_1 \|_{L^\infty(\statespace)} \| \varphi \|_{L^\infty(\statespace)}$ and thus, assumption \eqref{ass_hold_f} follows.
\qed \end{proof}

\section{Main result and general approach} \label{section:result}

\begin{theorem} \label{theo:main}
There exists $\alpha \in (0,1)$ such that \eqref{MFGC} has a classical solution $(u,m,v,P)$, with
\begin{equation} \label{eq:main}
\begin{cases}
\begin{array}{rl}
m \in & \mathcal{C}^{2+\alpha,1+ \alpha/2}(Q), \\
u \in & \mathcal{C}^{2+\alpha,1+ \alpha/2}(Q), \\
P \in & \mathcal{C}^\alpha(0,T;\R^k), \\
v \in & \mathcal{C}^{\alpha}(Q,\R^d), \ D_x v \in \mathcal{C}^{\alpha}(Q,\R^{d \times d}).
\end{array}
\end{cases}
\end{equation}
\end{theorem}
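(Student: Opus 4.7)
The plan is to apply the Leray--Schauder fixed point theorem to a suitably defined solution operator, with the required a priori bounds coming from the (incomplete) potential formulation of Section \ref{section:potential}.

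\textbf{Step 1 (Solution operator).} I would work on a Hölder space $X = \mathcal{C}^{\alpha}(Q) \times \mathcal{C}^{1+\alpha,\alpha/2}(Q)$ for the pair $(m,u)$ and define an operator $\mathcal{T}\colon X \to X$ as follows. Given $(m,u) \in X$, use the auxiliary mappings of Section \ref{section:potential} to extract $P(t)$ and $v(x,t)$ from the coupled relations $(iii)$--$(iv)$\eqref{MFGC}; the Hölder regularity of these mappings established in Section \ref{section:auxiliary} yields $P \in \mathcal{C}^\alpha(0,T;\R^k)$ and $v \in \mathcal{C}^\alpha(Q,\R^d)$. I then solve the backward HJB equation $(i)$\eqref{MFGC} with source $f(x,t,m(t))$ and coupling $\phi^\intercal P$ to obtain a new $\tilde u$, and the forward Fokker--Planck equation $(ii)$\eqref{MFGC} with drift $v$ to obtain a new $\tilde m$; set $\mathcal{T}(m,u) := (\tilde m, \tilde u)$.

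\textbf{Step 2 (Continuity and compactness).} The linear parabolic Schauder estimates of the appendix ensure that $\tilde u, \tilde m \in \mathcal{C}^{2+\alpha,1+\alpha/2}(Q)$, a space which embeds compactly into $X$, giving compactness of $\mathcal{T}$. Continuity follows from the stability of parabolic solutions with respect to their coefficients and sources, combined with the Hölder continuity of the auxiliary mappings from Section \ref{section:auxiliary} and of $f$ via \eqref{ass_hold_f}.

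\textbf{Step 3 (A priori bounds along the homotopy).} For $\tau \in [0,1]$ I consider the scaled fixed point equation $(m,u) = \tau\,\mathcal{T}(m,u)$, corresponding to a parametric version of \eqref{MFGC}, and must bound all such fixed points uniformly in $\tau$. This is where the potential formulation of Section \ref{section:potential} is crucial: showing that fixed points minimize a convex functional on the Fokker--Planck dynamics yields $L^\infty$ control of $m$ together with bounds on $\int L(v)m$ and on $\Phi(\int \phi v m)$, and hence on $P$ via \eqref{eq:phi_linear_growth} and \eqref{ass_Psi_lin_growth}. Assumption \eqref{ass_f_bounded} then bounds the source of $(i)$\eqref{MFGC} in $L^\infty$; the $W^{2,1,p}$ parabolic estimates with $p>d+2$ and Morrey embedding produce a uniform bound on $\nabla u$ in $\mathcal{C}^\alpha(Q)$. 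Feeding this back into the fixed point equation \eqref{eq:fixed_point} controls $P$ and $v$ in $\mathcal{C}^\alpha$, and parabolic regularity for $(ii)$\eqref{MFGC} closes the loop for $m$ in $\mathcal{C}^\alpha(Q)$.

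\textbf{Step 4 (Conclusion and bootstrap).} Leray--Schauder then yields a fixed point $(m,u)$ of $\mathcal{T}$, providing a solution of \eqref{MFGC} with the associated $(P,v)$. A standard bootstrap, using that the drift $v$ and the coupling $\phi^\intercal P$ are Hölder continuous, together with $g, m_0 \in \mathcal{C}^{2+\alpha}$ from \eqref{ass_init_cond} and the parabolic Schauder theory, promotes $u$ and $m$ to $\mathcal{C}^{2+\alpha,1+\alpha/2}(Q)$; differentiating the fixed point relation and invoking Lemma \ref{lemma:reg_H} together with the regularity of $\phi$ gives $D_x v \in \mathcal{C}^\alpha(Q,\R^{d\times d})$. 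The main obstacle is clearly Step 3: unlike standard MFG, the price $P$ depends on the joint law of state and control, so the usual HJB-based a priori estimates do not apply directly, and the potential reformulation is essential to extract integral bounds on $v$ and $P$ before any pointwise bound on $\nabla u$ is available.
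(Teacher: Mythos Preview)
Your overall architecture matches the paper's, but Step~3 has a genuine gap. From the potential formulation you correctly extract a bound on $\iint_Q |v|^2 m$ and hence on $\|P\|_{L^2(0,T)}$; however, you then claim that ``the $W^{2,1,p}$ parabolic estimates\ldots produce a uniform bound on $\nabla u$ in $\mathcal{C}^\alpha$''. This is circular: the HJB equation reads $-\partial_t u-\sigma\Delta u+H(\nabla u+\phi^\intercal P)=f(m)$, and to apply the linear $W^{2,1,p}$ estimate of the appendix you must treat $H(\nabla u+\phi^\intercal P)$ as a bounded source, which requires $\nabla u$ and $P$ to be bounded in $L^\infty$---precisely what you are trying to prove. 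At this stage you only have $P\in L^2$. The paper breaks this circularity by an entirely different argument (Step~2 of Proposition~\ref{prop:fixed_point_estimate}): it uses the stochastic control representation of $u$, inserts the control $\alpha\equiv 0$ for the upper bound, and---crucially---exploits the spatial Lipschitz assumption~\eqref{ass_L_Lipschitz} on $L$ together with $\|P\|_{L^2}\le C$ to show directly that $u(\cdot,t)$ is uniformly Lipschitz. Only after $\|\nabla u\|_{L^\infty}\le C$ is obtained this way does the paper invoke Lemma~\ref{lemma:stat_existence} to get $\|P\|_{L^\infty}\le C$, and only then do the $W^{2,1,p}$ estimates apply. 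Your outline also asserts that the potential formulation ``yields $L^\infty$ control of $m$'', which it does not: $L^\infty$ bounds on $m$ come only later, via parabolic regularity for the Fokker--Planck equation once $v$ and $D_xv$ are controlled.

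Two further technical points that the paper handles and your outline omits: (i) the auxiliary maps $\mathbf{P},\mathbf{v}$ of Lemma~\ref{lemma:stat_existence} are defined only for $m\in\mathcal{D}_1(\statespace)$, so during the fixed-point iteration one must compose with a projection $\rho$ onto $L^\infty(0,T;\densities)$ (Step~1 of the proof of Theorem~\ref{theo:main}); (ii) the paper's homotopy is \eqref{MFGC_tau}, with $\tau$ inserted in front of $H$, $f$, $g$ and $\mathrm{div}(vm)$, rather than $(m,u)=\tau\,\mathcal{T}(m,u)$---your scaling would force $m\equiv 0$ at $\tau=0$, which is not a density and breaks the definition of $\mathcal{T}$. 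The paper works in $X=(W^{2,1,p}(Q))^2$ rather than Hölder spaces, but that is a matter of convenience, not substance.
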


The result is obtained with the Leray-Schauder theorem, recalled below.

\begin{theorem}[Leray-Schauder]
Let $X$ be a Banach space and let $\mathcal{T} \colon X \times [0,1] \to X$ be a continuous and compact mapping. Let $x_0 \in X$. Assume that $\mathcal{T}(x,0)=x_0$ for all $x \in X$ and assume there exists $C>0$ such that $\norm x \norm_X < C$ for all $(x,\tau) \in X \times [0,1]$ such that $\mathcal{T} (x,\tau) = x$. Then, there exists $x \in X$ such that $\mathcal{T}(x,1) = x$.
\end{theorem}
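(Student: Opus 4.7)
The plan is to apply the Leray-Schauder theorem with base Banach space $X = \mathcal{C}^{\alpha}(Q)$ (for some small $\alpha \in (0,1)$) and a fixed-point operator $\mathcal{T} \colon X \times [0,1] \to X$ constructed as follows. For $(m,\tau) \in X \times [0,1]$, I first solve the backward HJB equation $(i)$\eqref{MFGC} coupled with the fixed-point equation $(iii)$--$(iv)$\eqref{MFGC} for the feedback $v$ and price $P$, using $m$ as exogenous argument of $f$ and with coupling data scaled by $\tau$ (e.g.\ right-hand side $\tau f(\cdot,\cdot,m)$ and terminal condition $\tau g$); the auxiliary mappings of Section \ref{section:auxiliary} guarantee that, for every candidate $u$, the fixed point equation \eqref{eq:fixed_point} admits a unique solution $(v,P)$ depending in a locally Hölder way on $m$ and $\nabla u$ (using the convex reformulation built from $L$ and $\Phi$). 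I then solve the forward Fokker-Planck equation $(ii)$\eqref{MFGC} with drift $v$ and initial condition $m_0$, producing $\tilde m$, and I set $\mathcal{T}(m,\tau):=\tilde m$. At $\tau=0$ the HJB is trivial, so $u\equiv 0$ and the resulting $(v,P,\tilde m)$ do not depend on $m$, providing the constant $x_0$ required by the Leray-Schauder hypothesis.

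Next I would verify that $\mathcal{T}$ is continuous and compact. Combining the Hölder regularity of the auxiliary mappings with the parabolic maximal-regularity and Schauder estimates in $W^{2,1,p}(Q)$ collected in the appendix (recalling $p>d+2$, so that $W^{2,1,p}(Q) \hookrightarrow \mathcal{C}^{1+\beta,(1+\beta)/2}(Q)$ for some $\beta>0$), I get that $u$ and hence the drift $v$ inherit Hölder bounds from the data. A second application of Schauder estimates to the Fokker-Planck equation then yields $\tilde m \in \mathcal{C}^{2+\alpha,1+\alpha/2}(Q)$ with norm controlled by $\|m\|_X$ and $\tau$. The compact embedding $\mathcal{C}^{2+\alpha,1+\alpha/2}(Q) \hookrightarrow \mathcal{C}^{\alpha}(Q)$ gives compactness of $\mathcal{T}$, while stability of each step under perturbations of $(m,\tau)$ gives continuity.

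The main obstacle, and the point where most of the paper's machinery enters, is the uniform a priori bound: one must exhibit $C>0$ such that every $m \in X$ satisfying $\mathcal{T}(m,\tau)=m$ obeys $\|m\|_X<C$, uniformly in $\tau \in [0,1]$. Here the potential formulation promised in Section \ref{section:potential} is the key tool: any such fixed point coincides with the MFG solution of the $\tau$-deformed system and can be characterized as the minimizer of an optimal control problem of the Fokker-Planck equation whose cost involves $\int_Q L(x,t,v)m \, \mathrm{d} x\, \mathrm{d} t + \int_0^T \Phi\bigl(t,\smint \phi v m\bigr)\, \mathrm{d} t$ together with terms in $f$ and $g$. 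Comparing the minimum value to the value obtained with the feasible control $v \equiv 0$ and invoking the coercivity \eqref{L_quad_growth1}, the lower bound \eqref{eq:phi_linear_growth}, the boundedness \eqref{ass_f_bounded} and the growth \eqref{ass_Psi_lin_growth}, I obtain a uniform bound on $\int_Q |v|^2 m\, \mathrm{d} x\, \mathrm{d} t$ and, via \eqref{eq:jensen} and \eqref{ass_Psi_lin_growth}, on $\|P\|_{L^\infty}$. Plugging these bounds back into the HJB and iterating the parabolic estimates propagates control to $\nabla u$, then to $v$, and finally to $m$ in $\mathcal{C}^{\alpha}(Q)$.

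Once Leray-Schauder produces a fixed point $m^* = \mathcal{T}(m^*,1)$, the full regularity \eqref{eq:main} is obtained by a standard bootstrap: Hölder regularity of $m^*$ together with Lemma \ref{lemma:reg_H} gives Hölder regularity of the coefficients in the HJB, hence $u \in \mathcal{C}^{2+\alpha,1+\alpha/2}(Q)$; this in turn upgrades $P$ and $v$ to the regularity stated in \eqref{eq:main}; a final application of Schauder estimates to the Fokker-Planck equation gives $m^* \in \mathcal{C}^{2+\alpha,1+\alpha/2}(Q)$, completing the proof.
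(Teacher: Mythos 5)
There is a genuine gap here, and it is of a basic kind: your proposal does not prove the stated theorem at all. The statement to be proved is the abstract Leray--Schauder fixed-point theorem for a continuous compact map $\mathcal{T}\colon X\times[0,1]\to X$ on an arbitrary Banach space. What you have written is instead a sketch of how to \emph{apply} that theorem to the system \eqref{MFGC} (construction of the operator, continuity, compactness, a priori bounds via the potential formulation, bootstrap) --- i.e.\ essentially an outline of the proof of Theorem \ref{theo:main} in Sections \ref{section:apriori}--\ref{section:leray-schauder}. Your own text even says ``Once Leray--Schauder produces a fixed point\dots'', so the argument invokes as a black box precisely the statement it was supposed to establish; as a proof of the Leray--Schauder theorem it is circular, and as written it establishes nothing about a general Banach space $X$ and a general compact homotopy $\mathcal{T}$.

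For the record, the paper does not reprove the abstract theorem either: it cites \cite[Theorem 11.6]{gilbarg2015elliptic} for the case $x_0=0$ and observes that the general $x_0$ follows by a translation ($\mathcal{T}$ replaced by $\mathcal{T}(\cdot+x_0,\cdot)-x_0$, say). If you wanted to actually prove the statement, the standard route is either Leray--Schauder degree theory (homotopy invariance of the degree of $I-\mathcal{T}(\cdot,\tau)$ on the ball $B(X,C)$), or a reduction to the Schauder fixed-point theorem: use the a priori bound $\|x\|_X<C$ to define a truncated map (e.g.\ $\tilde{\mathcal{T}}(x)=\mathcal{T}\bigl(x,\min(1,\tfrac{2(C-\|x\|_X)_+}{C})\bigr)$ suitably rescaled) that sends a closed ball compactly into itself, extract a fixed point by Schauder, and check that the a priori bound forces this fixed point to satisfy $\mathcal{T}(x,1)=x$. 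None of this appears in your proposal.
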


A proof of the theorem can be found in \cite[Theorem 11.6]{gilbarg2015elliptic}, for $x_0=0$. The extension to a general value of $x_0$ can be easily obtained with a translation argument that we do not detail.
The application of the Leray-Schauder theorem and the construction of
$\mathcal{T}$ will be detailed in Section
\ref{section:leray-schauder}.
Let us mention that the set of fixed points of $\mathcal{T}(\cdot,\tau)$,
for $\tau \in [0,1]$, 
will coincide with the set of solutions of the following parametrization of \eqref{MFGC}:
\begin{equation}\label{MFGC_tau}
\left\lbrace 
\begin{array}{l l l}
(i) \quad &-\partial_t u -\sigma \Delta u + \tau H( \nabla u
            + \phi^\intercal \price (t) ) = \tau f(m(t)) \quad &(x,t) \in Q, \\[5pt]
(ii) \quad & \partial_t m - \sigma \Delta m + \tau \mathrm{div} (mv)= 0 \quad &(x,t) \in Q, \\[5pt]
(iii) \quad & \price(t) = \Psi \left(t, \int_{ \statespace } \phi(x,t) v(x,t) m(x,t) \; \mathrm{d} x \right) \quad & t \in [0,T],\\[5pt]
(iv) \quad & v(x,t) =  - H_p(x,t, \nabla u (x,t) + \phi(x,t)^\intercal\price(t) ) &(x,t) \in Q, \\[5pt]
(v) \quad & m(x,0)= m_0(x), \quad u(x,T) = \tau g(x) \quad & x
                                                                  \in
                                                                  \statespace,\\[5pt]
\end{array} 
\tag{MFGC$_\tau$} \right.
\end{equation}
Of course, \eqref{MFGC_tau} corresponds to \eqref{MFGC} for $\tau= 1$.
Let us introduce the spaces $X$ and $X'$, used for the formulation of the fixed-point equation:
\begin{equation*}
X := \big( W^{2,1,p}(Q) \big)^2, \quad
X' := X \times L^\infty(Q,\R^d) \times L^\infty(0,T;\R^k).
\end{equation*}
The HJB equation $(i)$ and the Fokker-Planck
equation $(ii)$ are classically understood in the viscosity and weak
sense, respectively. However, due to the choice of the solution spaces,
we may interpret these equations as equalities in $L^p(Q)$: in particular, if $u \in W^{2,1,p}(Q)$ and $P \in L^\infty(0,T;\R^k)$, we have that $\nabla u \in L^\infty(Q;\R^d)$ (by Lemma \ref{lemma:max_reg_embedding}), and thus $H(\nabla u + \phi^\intercal P(t)) \in L^\infty(Q)$.
A first and important step of our analysis is the construction of auxiliary mappings allowing to express $v$ and $P$ as functions of $m$ and $u$. These mappings cannot be obtained in a straightforward way, since in $(iii)$, $P$ depends on $v$ and in $(iv)$, $v$ depends on $P$. 

\begin{lemma} \label{lemma:max_principle}
Let $\tau \in [0,1]$, let $(m,v) \in W^{2,1,p}(Q) \times L^\infty(Q,\R^d)$ be a weak solution to the Fokker-Planck equation $\partial_t m - \sigma \Delta m + \tau \text{\emph{div}}(vm)= 0$, $m(\cdot,0)= m_0(\cdot)$.
Then $m \geq 0$ and for all $t \in [0,T]$, $\int_{\statespace} m(x,t) \; \mathrm{d} x = 1$.
\end{lemma}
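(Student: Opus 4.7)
The plan has two independent parts: conservation of mass and non-negativity.

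For the conservation of mass, I would integrate the Fokker-Planck equation over $\statespace$. Since we work on the flat torus, there is no boundary, so $\int_{\statespace} \Delta m(\cdot,t) \, \mathrm{d}x = 0$ and $\int_{\statespace} \mathrm{div}(v m)(\cdot,t) \, \mathrm{d}x = 0$ (the latter being justified in the weak sense, by testing the weak formulation against the constant function $1$). Hence $\frac{\mathrm{d}}{\mathrm{d}t} \int_{\statespace} m(x,t) \, \mathrm{d}x = 0$, which combined with the initial condition $\int m_0 = 1$ gives the conservation $\int m(\cdot,t) = 1$ for all $t \in [0,T]$.

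For the non-negativity, I would use a Stampacchia-type energy argument. Set $w = m^- := \max(-m,0)$. Since $m \in W^{2,1,p}(Q)$ and $p > d+2$, $m$ is H\"older continuous on $\bar Q$, and $w$ inherits the Sobolev regularity needed so that $\nabla w = -\nabla m \cdot \mathbf{1}_{\{m<0\}}$ almost everywhere and $m w = -w^2$ a.e. Testing the weak Fokker-Planck equation against $-w$ and integrating over $\statespace$ (legitimate because $w(\cdot,t) \in W^{1,p}(\statespace)$, $v \in L^\infty$ and $m \in L^\infty$) yields
\begin{equation*}
\frac{\mathrm{d}}{\mathrm{d}t} \tfrac{1}{2} \int_{\statespace} w^2 \, \mathrm{d}x + \sigma \int_{\statespace} |\nabla w|^2 \, \mathrm{d}x = \tau \int_{\statespace} v \cdot \nabla w \cdot w \, \mathrm{d}x,
\end{equation*}
where on the right-hand side I have used $-\int \nabla w \cdot v m \, \mathrm{d}x = \int \nabla w \cdot v \, w \, \mathrm{d}x$ (as $m = -w$ on the set $\{w>0\}$). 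Young's inequality with a small parameter absorbs $\sigma \int |\nabla w|^2$, leaving
\begin{equation*}
\frac{\mathrm{d}}{\mathrm{d}t} \int_{\statespace} w^2 \, \mathrm{d}x \leq C(\tau,\sigma,\|v\|_\infty) \int_{\statespace} w^2 \, \mathrm{d}x.
\end{equation*}
Since $m_0 \geq 0$, we have $w(\cdot,0) \equiv 0$, so Gr\"onwall's lemma forces $w \equiv 0$ a.e., hence everywhere on $\bar Q$ by continuity, giving $m \geq 0$.

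The main technical subtlety is justifying the chain rule for $w$ and the integration by parts in the diffusion and drift terms, given that the Fokker-Planck equation is only a weak identity and $v$ is merely $L^\infty$. I would handle this by noting that $m \in W^{2,1,p}(Q)$ with $p>d+2$ makes $\nabla m \in L^\infty(Q,\R^d)$ (by Lemma~\ref{lemma:max_reg_embedding} invoked elsewhere in the paper), so that $w \in L^2(0,T;W^{1,p}(\statespace))$ and all integrals above are absolutely convergent; the chain rule $\nabla w = -\nabla m \, \mathbf{1}_{\{m<0\}}$ and the vanishing of $\partial_t w$ on $\{m \geq 0\}$ are standard Stampacchia truncation results. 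The rest is a straightforward energy/Gr\"onwall computation.
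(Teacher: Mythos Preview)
Your proof is correct and follows essentially the same approach as the paper: the paper multiplies by $\mu=\min(0,m)=-m^-$ (your $-w$), integrates over $\statespace\times(0,t)$, uses $\nabla\mu=\mathbf{1}_{\{m<0\}}\nabla m$, absorbs the gradient term via Young's inequality, and concludes by Gronwall; mass conservation is obtained exactly as you describe by integrating the equation in space. Your presentation is in fact slightly more careful about justifying the Stampacchia chain rule and the integrability of the various terms.
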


\begin{proof}
  Multiply (MFGC$_\tau$)(ii) by $\mu(x,t):=\min(0,m(x,t))$.
  Use $\nabla\mu(x,t)=\1B_{\{m(x,t)<0\}} \nabla m(x,t)$, so that
  integrating (by parts) over $Q_t:=\statespace\times (0,t)$,
  since $v$ is essentially bounded, we get that
  \begin{align*}
  & \frac{1}{2} \int_{\statespace} \mu(x,t)^2 \; \mathrm{d} x
  + \sigma \iint_{Q_t}  | \nabla \mu(x,s)|^2 \; \mathrm{d} x \; \mathrm{d} s
  =
  \tau \iint_{Q_t} \langle v, \nabla \mu \rangle m \; \mathrm{d} x \; \mathrm{d} s \\
  & \qquad =
  \tau \iint_{Q_t} \langle v,\nabla \mu \rangle \mu \; \mathrm{d} x \; \mathrm{d} s
  \leq  C \iint_{Q_t} |\mu|^2 \; \mathrm{d} x \; \mathrm{d} s
  + \sigma \iint_{Q_t}  | \nabla \mu|^2 \; \mathrm{d} x \; \mathrm{d} s,
  \end{align*}
  so that after cancellation of the contribution of
  $\nabla\mu$, we obtain, applying
  Gronwall's lemma to $a(t):=\int_{\statespace} \mu(x,t)^2$,
  that $a(t)=0$ for all $t$ which means that $m$ is
  non-negative.
  Moreover, for all $t \in [0,T]$,
\begin{equation*}
\int_{\statespace} m(x,t) \; \mathrm{d} x
= \int_{\statespace} m(x,0) \; \mathrm{d} x + \iint_{Q_t} \sigma \Delta m - \tau \text{div}(v m) \; \mathrm{d} x \; \mathrm{d} s.
\end{equation*}
Integrating by parts the double integral we see that it is equal to 0,
and we conclude by noting that
$\int_{\statespace} m(x,0) \; \mathrm{d} x = \int_{\statespace} m_0(x) \; \mathrm{d} x = 1$.
\qed \end{proof}

\section{Potential formulation} \label{section:potential}

In this section, we first establish a potential formulation of the mean field game problem \eqref{MFGC_tau}, that is to say, we prove that for $(u_\tau,m_\tau,v_\tau,P_\tau) \in X'$ satisfying \eqref{MFGC_tau}, $(m_\tau,v_\tau)$ is a solution to an optimal control problem. We prove then that for all $t$, $v_\tau(\cdot,t)$ is the unique solution of some optimization problem, which will enable us to construct the announced auxiliary mappings.

Let us introduce the cost functional $B \colon W^{2,1,p}(Q) \times L^\infty(Q,\R^d) \times L^\infty(Q) \rightarrow \R$, defined by
\begin{align}
& B (m,v;\tilde{f}) =
\iint_Q \big( L ( x,t, v(x,t)) + \tilde{f}(x,t) \big) m(x,t) \; \mathrm{d} x \; \mathrm{d} t \notag \\
& \qquad + \int_0^T \Phi \Big(t, \int_{\statespace} \phi(x,t) v(x,t)  m(x,t) \; \mathrm{d} x \Big) \, \mathrm{d} t + \int_{\statespace} g(x) m(x,T) \; \mathrm{d} x. \label{potential}
\end{align}
We have the following result. 

\begin{lemma} \label{lemma:var_princ_1}
For all $\tau \in [0,1]$ and $(u_\tau,m_\tau,v_\tau,P_\tau) \in X'$ satisfying \eqref{MFGC_tau}, the pair $(m_\tau,v_\tau)$ is the solution to the following optimization problem:
\begin{equation} \label{eq:primal_problem}
\min_{\begin{subarray}{c} m \in W^{2,1,p}(Q) \\ v \in
    L^\infty(Q,\R^d) \end{subarray}} \
B(m,v;\tilde{f}_\tau), \
\text{s.t.: }
\begin{cases}
\begin{array}{rl}
\partial_t m - \sigma \Delta m + \tau \mathrm{div} (vm)= & 0, \\
m(x,0)= & m_0(x),
\end{array}
\end{cases}
\end{equation}
where $\tilde{f}_\tau(x,t)= f(x,t,m_\tau(t))$.
\end{lemma}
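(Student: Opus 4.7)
The plan is a classical duality argument: $u_\tau$ will serve as costate for the Fokker--Planck-constrained optimization problem, and pointwise Fenchel--Young together with the convexity of $\Phi$ will furnish the necessary lower bound on $B$. Fix any admissible $(m,v) \in W^{2,1,p}(Q) \times L^\infty(Q,\R^d)$, i.e.\ satisfying the Fokker--Planck constraint with $m(\cdot,0)=m_0$. Computing $\frac{\mathrm{d}}{\mathrm{d} t}\int_{\statespace} u_\tau m \, \mathrm{d} x$ via the two PDEs, integrating over $[0,T]$, and applying Green's identity on the torus (so that the $\sigma\Delta$ cross-terms cancel and $-\int u_\tau\,\mathrm{div}(vm)\,\mathrm{d} x = \int \langle \nabla u_\tau, v\rangle m \, \mathrm{d} x$), I obtain the identity
\begin{equation*}
\tau \int_{\statespace} g \, m(\cdot,T) \, \mathrm{d} x - \int_{\statespace} u_\tau(\cdot,0)\, m_0 \, \mathrm{d} x = \tau \iint_Q \big[ H(p_\tau) + \langle \nabla u_\tau, v\rangle - \tilde{f}_\tau \big] m \, \mathrm{d} x \, \mathrm{d} t,
\end{equation*}
where $p_\tau := \nabla u_\tau + \phi^\intercal P_\tau$. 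The regularity $m, u_\tau \in W^{2,1,p}(Q) \hookrightarrow \mathcal{C}^{1+\alpha,(1+\alpha)/2}(Q)$ together with $v \in L^\infty$ makes every integral well-defined.

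For $\tau > 0$, dividing this identity by $\tau$ and substituting for $\int g\,m(\cdot,T)\,\mathrm{d} x$ in $B(m,v;\tilde{f}_\tau)$, the $\tilde{f}_\tau m$ terms cancel and the cost rewrites as $B(m,v;\tilde{f}_\tau) = \iint_Q [L(v)+H(p_\tau)+\langle \nabla u_\tau, v\rangle] m \, \mathrm{d} x \, \mathrm{d} t + \int_0^T \Phi(t, \smint \phi v m) \, \mathrm{d} t + \frac{1}{\tau}\int_{\statespace} u_\tau(\cdot,0) m_0 \, \mathrm{d} x$. I then apply two convexity estimates. By \eqref{eq:v_eq_minus_Hp}, the Fenchel--Young inequality reads $L(v)+H(p_\tau)+\langle p_\tau, v\rangle \geq 0$ with equality iff $v = -H_p(p_\tau)$, which rearranges to $L(v)+H(p_\tau)+\langle \nabla u_\tau, v\rangle \geq -\langle P_\tau, \phi v\rangle$. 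Second, the convexity of $\Phi(t,\cdot)$ combined with $P_\tau(t) = \Phi_z(t, z_\tau(t))$, where $z_\tau(t) := \int \phi v_\tau m_\tau \, \mathrm{d} x$, gives $\Phi(t,z) - \langle P_\tau(t), z\rangle \geq \Phi(t, z_\tau(t)) - \langle P_\tau(t), z_\tau(t)\rangle$. Integrating the first bound against $m$ and the second in $t$, the cross-terms $\int_0^T \langle P_\tau, \smint \phi v m\rangle \, \mathrm{d} t$ cancel, producing a lower bound on $B(m,v;\tilde{f}_\tau)$ that is independent of $(m,v)$. Equality in both inequalities is attained at $(m,v) = (m_\tau, v_\tau)$---the first by equation $(iv)$ of \eqref{MFGC_tau}, the second trivially at $z = z_\tau(t)$---so $(m_\tau, v_\tau)$ realizes this lower bound and is therefore a minimizer.

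The case $\tau = 0$ requires a separate but simpler treatment, since dividing by $\tau$ is invalid: the Fokker--Planck constraint then uniquely determines $m$ as the heat flow of $m_0$, and $B(m,v;\tilde{f}_0)$ reduces, up to an additive constant, to a convex functional of $v$ alone whose first-order optimality condition coincides with equation $(iv)$ of \eqref{MFGC_tau} specialized at $\tau = 0$; hence $(m_0, v_0)$ is again a minimizer. The only delicate point overall is the justification of the integration by parts in the identity above, which follows directly from the Sobolev embedding; I do not anticipate any deeper obstacle.
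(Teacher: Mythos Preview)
Your argument is correct and follows essentially the same route as the paper: both use $u_\tau$ as costate, combine the pointwise Fenchel--Young inequality $L(v)+H(p_\tau)+\langle p_\tau,v\rangle\ge 0$ (with equality at $v_\tau=-H_p(p_\tau)$) with the convexity of $\Phi$ linearized at $z_\tau$, integrate by parts against the Fokker--Planck equation, and treat $\tau=0$ separately. The one step you should make explicit is that integrating the Fenchel--Young lower bound against $m$ preserves the inequality only because $m\ge 0$, which the paper secures via Lemma~\ref{lemma:max_principle}.
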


\begin{remark}
Let us emphasize that the above optimal control problem is only an incomplete potential formulation, since the term $\tilde{f}_\tau$ still depends on $m_\tau$.
\end{remark}

\begin{proof}[Lemma \ref{lemma:var_princ_1}]
Let us consider the case where $\tau \in (0,1]$.
Let $(m,v) \in W^{2,1,p}(Q) \times L^\infty(Q,\R^d)$
be a feasible pair, i.e., it satisfies the constraint in \eqref{eq:primal_problem}. 
For all $(x,t) \in Q$, we have $v_\tau = - H_p(\nabla u_\tau + \phi^\intercal P_\tau)$. Therefore, by \eqref{eq:conjugate} and \eqref{eq:v_eq_minus_Hp}, we have that
\begin{align*}
L (v) \geq \ & - H(\nabla u_\tau + \phi^\intercal \price_\tau ) - \myprod{\nabla u_\tau + \phi^\intercal \price_\tau}{v},  \\
L (v_\tau ) = \ & - H(\nabla u_\tau + \phi^\intercal \price_\tau ) - \myprod{\nabla u_\tau + \phi^\intercal \price_\tau }{v_\tau},
\end{align*}
for all $(x,t) \in Q$.
Moreover, by Lemma \ref{lemma:max_principle}, $m \geq 0$ and $m_\tau \geq 0$.
Therefore,
\begin{align} 
& L (v) m - L (v_\tau) m_\tau \notag \\
& \qquad \geq - H(\nabla u_\tau + \phi^\intercal \price_\tau) (m - m_\tau) -\myprod{\nabla u_\tau + \phi^\intercal \price_\tau }{vm - v_\tau m_\tau }. \label{eq:form_var1}
\end{align}
Using $(i)$\eqref{MFGC_tau}, we obtain
\begin{align*}
& L (v) m - L (v_\tau) m_\tau \\
& \qquad \geq
\frac{1}{\tau}(-\partial_t u_\tau -\sigma \Delta u_\tau- \tau \tilde{f}_\tau) ( m -
m_\tau )
- \myprod{\nabla u_\tau + \phi^\intercal \price_\tau }{v m - v_\tau m_\tau }.
\end{align*}
After integration with respect to $x$, we obtain that for all $t$,
\begin{align*}
& \int_{\statespace} (L (v) m - L (v_\tau) m_\tau) + \tilde{f}_\tau(m-m_\tau) \; \mathrm{d} x \notag \\
  & \qquad \geq \frac{1}{\tau} \int_{\statespace}
    (-\partial_t u_\tau -\sigma \Delta u_\tau) ( m - m_\tau ) \; \mathrm{d} x
- \int_{\statespace} \myprod{\nabla u_\tau}{v m - v_\tau m_\tau } \; \mathrm{d} x \\
& \qquad  \qquad - \langle P_\tau, \smint \phi(v m - v_\tau m_\tau) \rangle.
\end{align*}
We obtain with the convexity of $\Phi$ and $(iii)$\eqref{MFGC_tau} that
\begin{align}
\Phi (\smint \phi m v) - \Phi (\smint \phi v_\tau m_\tau)
\geq \ & \langle \Psi (\smint \phi m_\tau v_\tau), \smint \phi (v m - v_\tau m_\tau) \rangle \notag \\
= \ & \langle P_\tau, \smint \phi (m v - m_\tau v_\tau) \rangle. \label{eq:form_var2}
\end{align}
Using the previous calculations to bound $B(m,v;\tilde{f}_\tau)-B(m_\tau,v_\tau;\tilde{f}_\tau)$
from below, we observe that the
term $\langle P_\tau, \smint \phi (m - m_\tau v_\tau) \rangle$
cancels out and obtain
\begin{align*}
& B (m,v;\tilde{f}_\tau) - B (m_\tau,v_\tau;\tilde{f}_\tau) \\
& \qquad \geq \iint_Q \frac{1}{\tau}(-\partial_t u_\tau -\sigma \Delta u_\tau) (m-m_\tau)
         - \myprod{\nabla u_\tau}{m v - m_\tau v_\tau} \; \mathrm{d} x \; \mathrm{d} t \\
& \qquad \qquad + \int_{\statespace} g(x) (m(x,T) - m_\tau(x,T) ) \; \mathrm{d} x.
\end{align*}
Integrating by parts and using $(ii)$\eqref{MFGC_tau}, we finally obtain that
\begin{equation*}
B (m,v;\tilde{f}_\tau) - B (m_\tau,v_\tau;\tilde{f}_\tau) \geq \frac{1}{\tau} \int_{\statespace} u_\tau(0,x)( m(0,x)-m_\tau(0,x) )\; \mathrm{d} x = 0,
\end{equation*}
as was to be proved.
We do not detail the proof for the case $\tau=0$, which is actually simpler. Indeed, for $\tau=0$, the solution to the Fokker-Planck equation is independent of $v$ and thus $m= m_\tau$ in the above calculations.
\qed \end{proof}

We have proved that the pair $(m_\tau,v_\tau)$ is the solution to an
optimal control problem. Therefore, for all $t$, $v_\tau(\cdot,t)$
minimizes the Hamiltonian associated with problem
\eqref{eq:primal_problem}. Let us introduce some notation, in order to
exploit this property. For $m \in \densities$, we denote by
$L_m^2(\statespace,\R^d)$ the Hilbert space of measurable mappings $v
\colon \statespace \rightarrow \R^d$ such that $\int_{\statespace}
|v|^2 m < \infty$, equipped with the scalar product
$\int_{\statespace} \langle v_1,v_2 \rangle m$.
An element of $L_m^2(\statespace)$ is an
equivalent class of functions equal  $m$-almost everywhere.
Note that $L^\infty(\statespace) \subset L_m^2(\statespace)$.

For $t \in [0,T]$, $m \in \densities$, and
$w \in L^\infty(\statespace,\R^d)$,
we consider the mapping
\begin{equation*}
v \in L_m^2(\statespace, \R^d) \mapsto
J(v;t,m,w):= \Phi \big( t, \smint \phi v m \big) + \int_{\statespace} \big( L(v)+ \langle w,v \rangle \big) m \; \mathrm{d} x.
\end{equation*}
Combining inequalities \eqref{eq:form_var1} and \eqref{eq:form_var2} (with $m=m_\tau$),
we directly obtain that for all $t \in [0,T]$, for all $v \in L_{m}^2(\statespace,\R^d)$ with $m=m_\tau(\cdot,t)$,
\begin{equation*}
J \big( v;t,m_\tau(t),\nabla u_\tau(t) \big) \geq J \big( v_\tau(t);t,m_\tau(t),\nabla u_\tau(t) \big).
\end{equation*}
The following lemma will enable us to express $P_\tau(t)$ and $v_\tau(\cdot,t)$ as functions of $m_\tau(\cdot,t)$ and $u_\tau(\cdot,t)$. The key idea is, roughly speaking, to prove the existence and uniqueness of a minimizer to $J(\cdot;t,m,w)$.

\begin{lemma} \label{lemma:stat_existence}
For all $t \in [0,T]$, for all $m \in \densities$, for all $R > 0$, and for all $w \in L^\infty(\statespace,\R^d)$ such that $\| w \|_{L^\infty(\statespace,\R^d)} \leq R$, there exists a unique pair $(v,P) \in L^\infty(\statespace,\R^d) \times \R^k$,  such that
\begin{equation} \label{eq:v_equal_H_p_0}
\begin{cases}
\begin{array}{rl}
v(x)= & - H_p (x,t, w(x) +  \phi(x,t)^\intercal P), \quad \text{for a.e. $x \in \statespace$}, \\
P= & \Psi(t, \smint \phi v m).
\end{array}
\end{cases}
\end{equation}
The pair $(v,P)$ is then denoted $(\mathbf{v}(t,m,w),\mathbf{P}(t,m,w))$.
Moreover, we have
\begin{equation} \label{eq:bound_v_P}
\| \mathbf{v}(t,m,w) \|_{L^\infty(\statespace,\R^d)} \leq C \quad \text{and} \quad |\mathbf{P}(t,m,w)| \leq C,
\end{equation}
where the constant $C$ is independent of $t$, $m$, and $w$ (but depends on $R$).
\end{lemma}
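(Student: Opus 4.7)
The plan is to identify the pair $(v,P)$ satisfying \eqref{eq:v_equal_H_p_0} with the first-order optimality conditions of a convex minimization problem, namely the minimization over the Hilbert space $L_m^2(\statespace,\R^d)$ of the functional $v \mapsto J(v;t,m,w)$ introduced just before the lemma. My first task would be to show that $J$ is continuous, strongly convex, and coercive on $L_m^2$. Strong convexity follows from integrating \eqref{eq:grad_monotony} against $m$, which yields $\int \myprod{L_v(v_2) - L_v(v_1)}{v_2 - v_1} m \geq \tfrac{1}{C}\|v_2 - v_1\|_{L_m^2}^2$; the term $v \mapsto \Phi(t, \smint \phi v m)$ is convex because $\Phi$ is convex and the inner map is linear; the last term is linear. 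Coercivity follows from \eqref{L_quad_growth1} (giving $\int L(v) m \geq \tfrac{1}{C}\|v\|_{L_m^2}^2 - C$, as $\int m = 1$), from \eqref{eq:phi_linear_growth} combined with the Cauchy--Schwarz estimate $|\smint \phi v m| \leq \|\phi\|_{L^\infty} \|v\|_{L_m^2}$, and from $|\int \myprod{w}{v} m| \leq R\|v\|_{L_m^2}$. Continuity on $L_m^2$ is a consequence of the upper bounds \eqref{ass_L_quad_growth2} and \eqref{ass_Psi_lin_growth}. Standard convex analysis then provides a unique minimizer $v^\star \in L_m^2$, and comparing $J(v^\star) \leq J(0) = 0$ (using $\Phi(t,0)=0$) with the coercive lower bound, a Young-type argument delivers the quantitative bound $\|v^\star\|_{L_m^2} \leq C(R)$.

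Next I would write down the Euler--Lagrange equation. Setting $P := \Psi(t,\smint \phi v^\star m) \in \R^k$, optimality at $v^\star$ reads
\begin{equation*}
\int_{\statespace} \myprod{L_v(x,t,v^\star) + w(x) + \phi(x,t)^\intercal P}{\delta v} m \,\mathrm{d} x = 0
\quad \text{for all } \delta v \in L_m^2(\statespace,\R^d).
\end{equation*}
The test function $\delta v = L_v(v^\star) + w + \phi^\intercal P$ lies in $L_m^2$ by the linear growth $|L_v(v)| \leq C|v|+C$ (a consequence of \eqref{eq:grad_monotony} and the boundedness of $L_v(\cdot,\cdot,0)$), and it forces $L_v(v^\star) + w + \phi^\intercal P = 0$ for $m$-a.e.\@ $x$. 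By the equivalence \eqref{eq:v_eq_minus_Hp}, this means $v^\star = -H_p(x,t,w + \phi^\intercal P)$ $m$-a.e. I would then \emph{define} $v$ on all of $\statespace$ by the pointwise formula $v(x) := -H_p(x,t,w(x) + \phi(x,t)^\intercal P)$; this representative coincides with $v^\star$ $m$-a.e., so $\smint \phi v m = \smint \phi v^\star m$ and the identity $P = \Psi(t,\smint \phi v m)$ is preserved. The bound on $P$ follows from $|\smint \phi v^\star m| \leq \|\phi\|_{L^\infty} \|v^\star\|_{L_m^2} \leq C(R)$ combined with \eqref{ass_Psi_lin_growth}, and the $L^\infty$-bound on $v$ follows because $w + \phi^\intercal P$ is uniformly bounded in terms of $R$ and $H_p$ is continuous on bounded sets by Lemma \ref{lemma:reg_H}.

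Finally, for uniqueness I would reverse the direction: any pair $(v,P) \in L^\infty(\statespace,\R^d) \times \R^k$ satisfying \eqref{eq:v_equal_H_p_0} yields, through \eqref{eq:v_eq_minus_Hp}, the identity $L_v(v) + w + \phi^\intercal P = 0$ everywhere, so $v$ is a critical point of $J(\,\cdot\,;t,m,w)$ in $L_m^2$; by strict convexity this critical point must coincide with $v^\star$ in $L_m^2$, hence $\smint \phi v m = \smint \phi v^\star m$ and $P$ is uniquely determined, after which the pointwise formula uniquely determines $v$ everywhere. The main subtlety throughout is the interplay between $m$-almost-everywhere and everywhere statements: the variational argument naturally produces only an $m$-a.e.\@ solution, so one must exploit the explicit pointwise relation $v = -H_p(w + \phi^\intercal P)$ to promote $v$ to an $L^\infty$ function defined on all of $\statespace$ and to derive the stated bounds; once this distinction is handled carefully, the remainder of the argument is essentially standard convex analysis on a Hilbert space.
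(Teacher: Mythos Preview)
Your proposal is correct and follows essentially the same route as the paper: minimize the strongly convex functional $J(\cdot;t,m,w)$ on $L_m^2$, read off the Euler--Lagrange identity $L_v(v^\star)+w+\phi^\intercal P=0$ $m$-a.e., promote $v^\star$ to an everywhere-defined $L^\infty$ function via the pointwise formula $v=-H_p(w+\phi^\intercal P)$, and then recover uniqueness by noting that any solution of \eqref{eq:v_equal_H_p_0} is a critical point of $J$. The only slip is the claim $J(0)=0$: in fact $J(0)=\int L(x,t,0)\,m\,\mathrm{d}x$, which is merely bounded by a constant (the paper writes $J(0)=C$), but this does not affect the argument.
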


\begin{proof}
If the pair $(v,P)$ satifies \eqref{eq:v_equal_H_p_0}, then
\begin{equation} \label{eq:v_equal_H_p}
  v= - H_p ( w + \phi^\intercal \Psi(\smint \phi v m) )
  \quad \text{a.e. on $\statespace$}.
\end{equation}
One can easily check that for proving the existence and uniqueness of a pair $(v,P)$ satisfying \eqref{eq:v_equal_H_p_0}, it is sufficient to prove the existence and uniqueness of $v \in L^\infty(\statespace,\R^d)$ satisfying \eqref{eq:v_equal_H_p}. For future reference, let us observe that by \eqref{eq:v_eq_minus_Hp}, relation \eqref{eq:v_equal_H_p} is equivalent to
\begin{equation} \label{eq:opti_cond_v}
\phi^\intercal(x,t) \Psi(t,\smint \phi v m) + L_v(x,t,v(x)) + w(x)= 0, \quad \text{for a.e. $x \in \statespace$}.
\end{equation}

\emph{Step 1}: existence and uniqueness of a minimizer of
$J(\cdot;t,m,w)$.\\
In view of (A1),
$v \mapsto \int_{\statespace} L(v)m \; \mathrm{d} x$
is strongly convex over $L_m^2(\statespace,\R^d)$.
Since the sum of a l.s.c.\@ convex function 
and of a l.s.c.\@ strongly convex function is l.s.c.\@ and strongly
convex,
so is the function $J(\cdot;t,m,w)$.
Thus, it possesses a unique minimizer $\bar{v}$ in
$L_m^2(\statespace,\R^d)$. We obtain
\begin{equation}
  \label{eq:estim_bar_v} 
C\|\bar v\|^2_{L_m^2(\statespace,\R^d)} - C \leq J(\bar v;t,m,w)) \leq J(0;t,m,w))  =C,
\end{equation}
so that
$\|\bar v\|^2_{L_m^2(\statespace,\R^d)} \leq C$, with
$C$ independent of $t$, $m$, and $w$, but depending on $R$, as all constants $C$ used in the proof.

\emph{Step 2:} existence of $\mathbf{v}(t,m,w)$ and a priori bound. \\
One can check that the mapping $\delta v \in L^\infty(\statespace,\R^d) \mapsto J(\bar{v}+ \delta v;t,m,w)$ is differentiable.
Since $\bar{v}$ is optimal, the derivative of the above mapping is null at $\delta v=0$ and thus
\begin{equation*}
\big[ \phi^\intercal(x,t) \Psi(t,\smint \phi(x')\bar v(x')m(x')\, \mathrm{d} x') + L_v(x,t,\bar v(x)) + w(x) \big] m(x)= 0,
\end{equation*}
for a.e.\@ $x \in \statespace$.
Using then the equivalence of \eqref{eq:v_equal_H_p} and \eqref{eq:opti_cond_v}, we obtain that
\begin{equation*}
m(x) > 0 \Longrightarrow
\bar{v}(x)= - H_p \big(x,t,w(x) + \phi^\intercal(x,t) \Psi(t, \smint \phi(x',t) \bar{v}(x')m(x') \, \mathrm{d} x') \big),
\end{equation*}
for a.e.\@ $x \in \statespace$.
Consider now the measurable function $v$ defined by
\begin{equation*}
v(x)= - H_p \big(x,t,w(x) + \phi^\intercal(x,t) \Psi(t,\smint \phi(x',t) \bar{v}(x') m(x')\, \mathrm{d} x'\big),
\end{equation*}
for a.e.\@ $x \in \statespace$.
The two functions $v$ and $\bar{v}$ may not be equal for a.e.\@ $x$ if $m(x)=0$ on a subset of $\statespace$ of non-zero measure. Still they are equal in $L_m^2(\statespace,\R^d)$, which ensures in particular that $\smint \phi(x',t) \bar{v}(x') m(x') \, \mathrm{d} x' = \smint \phi(x',t) {v}(x') m(x') \, \mathrm{d} x'$ and finally that $v$ satisfies \eqref{eq:v_equal_H_p} and lies in $L^\infty(\statespace,\R^d)$, as a consequence of the continuity of $H_p$ (proved in Lemma \ref{lemma:reg_H}). We also have that $\| \bar{v} \|_{L_m^2(\statespace,\R^d)}= \| v \|_{L_m^2(\statespace,\R^d)} \leq C$, by \eqref{eq:estim_bar_v}.
Using the Cauchy-Schwarz inequality and assumption \eqref{ass_holder}, we obtain that $|\smint \phi v m| \leq C$. We obtain then with assumption \eqref{ass_Psi_lin_growth} that for $P= \Psi( \smint \phi v m)$, we have $|P| \leq C$. Using assumption \eqref{ass_holder} and the continuity of $H_p$, we finally obtain that $\| v \|_{L^\infty(\statespace,\R^d)} \leq C$. Thus the bound \eqref{eq:bound_v_P} is satisfied.

\emph{Step 3:} uniqueness of $\mathbf{v}(t,m,w)$. \\
Let $v_1$ and $v_2 \in L^\infty(\statespace,\R^d)$ satisfy \eqref{eq:v_equal_H_p}. Then $DJ(v_i;t,m,w)= 0$,
proving that $v_1$ and $v_2$ are minimizers of $J(\cdot;t,m,w)$ and thus are equal in $L_m^2(\statespace,\R^d)$. Therefore $\smint \phi(x',t) v_1(x') m(x') \; \mathrm{d} x' = \smint \phi(x',t) v_2(x') m(x') \; \mathrm{d} x'$ and finally that $v_1= v_2$, by \eqref{eq:v_equal_H_p}.
\qed \end{proof}

\section{Regularity results for the auxiliary mappings} \label{section:auxiliary}

We provide in this section some regularity results for the mappings $\mathbf{v}$ and $\mathbf{P}$.
We begin by proving that $\mathbf{P}(\cdot,\cdot,\cdot)$ is locally H\"older continuous. For this purpose, we perform a stability analysis of the optimality condition \eqref{eq:opti_cond_v}.

\begin{lemma} \label{lemma:stability}
Let $t_1$ and $t_2 \in [0,T]$, let $w_1$ and $w_2 \in L^\infty(\statespace,\R^d)$, let $m_1$ and $m_2 \in \densities$. Let $R>0$ be such that $\| w_i \|_{L^\infty(\statespace,\R^d)} \leq R$,
for $i=1,2$.
Then, there exist a constant $C>0$ and an exponent $\alpha \in (0,1)$, both independent of $t_1$, $t_2$, $w_1$, $w_2$, $m_1$, and $m_2$ but depending on $R$, such that
\begin{align}
& | \mathbf{P}(t_2,m_2,w_2)-\mathbf{P}(t_1,m_1,w_1) | \notag \\
& \qquad \leq C \big( |t_2-t_1|^{\alpha} + \| w_2 - w_1 \|_{L^\infty(\statespace,\R^d)}^{\alpha} + \| m_2 - m_1 \|_{L^1(\statespace)}^{\alpha} \big). \label{eq:stability_2}
\end{align}
\end{lemma}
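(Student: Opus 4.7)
The plan is to perform a stability analysis of the optimality condition \eqref{eq:opti_cond_v}, exploiting both the strong convexity of $L$ in $v$ from \eqref{eq:grad_monotony} and the monotonicity of $\Psi(t,\cdot)$ induced by the convexity of $\Phi(t,\cdot)$ via \eqref{eq:Phi_convex}. I abbreviate $v_i := \mathbf{v}(t_i, m_i, w_i)$, $P_i := \mathbf{P}(t_i, m_i, w_i)$, and $D_i := \smint \phi(\cdot, t_i) v_i m_i$ for $i=1,2$. By Lemma~\ref{lemma:stat_existence}, $\|v_i\|_{L^\infty(\statespace,\R^d)}$ and $|P_i|$ are bounded by constants depending only on $R$, so that all H\"older continuity properties in \eqref{ass_holder} apply with arguments lying in fixed bounded sets.

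The first step is to subtract the two identities \eqref{eq:opti_cond_v} (for $i=1$ and $i=2$) and isolate the difference of $L_v$ evaluated at the common parameters $(x,t_1)$:
\[
L_v(x,t_1,v_2) - L_v(x,t_1,v_1) + \phi(x,t_1)^\intercal (P_2 - P_1) = -E(x) \quad \text{for a.e.\@ $x \in \statespace$,}
\]
with $E(x) := (w_2-w_1)(x) + (\phi(x,t_2)-\phi(x,t_1))^\intercal P_2 + \bigl[L_v(x,t_2,v_2(x)) - L_v(x,t_1,v_2(x))\bigr]$ satisfying $\|E\|_{L^\infty} \leq C(|t_2-t_1|^\alpha + \|w_2-w_1\|_{L^\infty})$. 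Pairing this identity with $(v_2-v_1)(x)$ and integrating against $m_1(x)\,\mathrm{d}x$, the strong monotonicity \eqref{eq:grad_monotony} bounds the $L_v$-term from below and yields
\[
\tfrac{1}{C} \int_{\statespace} |v_2-v_1|^2 m_1 \; \mathrm{d}x \leq -\int_{\statespace} \langle E, v_2-v_1 \rangle m_1 \; \mathrm{d}x - \langle P_2 - P_1, I \rangle,
\]
where $I := \smint \phi(\cdot,t_1)(v_2-v_1) m_1$.

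The main obstacle is the cross-term $-\langle P_2-P_1, I\rangle$: a naive Young inequality would reintroduce $O(|P_2-P_1|^2)$, which is precisely the unknown we wish to control. To bypass this, I would rewrite $I = (D_2 - D_1) - E_1$, where
\[
E_1 := \smint (\phi(\cdot,t_2)-\phi(\cdot,t_1)) v_2 m_2 + \smint \phi(\cdot,t_1) v_2 (m_2 - m_1)
\]
satisfies $|E_1| \leq C(|t_2-t_1|^\alpha + \|m_2-m_1\|_{L^1})$, thanks to the uniform bound on $v_2$ and \eqref{ass_holder}. The monotonicity $\langle \Psi(t_1,D_2) - \Psi(t_1,D_1), D_2 - D_1 \rangle \geq 0$ (from $\Psi = \Phi_z$ with $\Phi$ convex), combined with the $t$-H\"older continuity of $\Psi$ and the uniform bound $|D_i| \leq C$, then yields $\langle P_2-P_1, D_2-D_1\rangle \geq -C|t_2-t_1|^\alpha$. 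Using the a priori bound $|P_2-P_1| \leq C$, this implies $-\langle P_2-P_1, I\rangle \leq C(|t_2-t_1|^\alpha + \|m_2-m_1\|_{L^1})$, independently of $|P_2-P_1|$.

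Substituting back and applying Young's inequality to the $E$-term so as to absorb a $\|v_2-v_1\|^2_{L^2_{m_1}}$ contribution into the left-hand side, I expect to reach
\[
\|v_2-v_1\|^2_{L^2_{m_1}(\statespace,\R^d)} \leq C \bigl( |t_2-t_1|^\alpha + \|w_2-w_1\|_{L^\infty}^2 + \|m_2-m_1\|_{L^1} \bigr).
\]
Cauchy--Schwarz together with $\int m_1 = 1$ then bounds $|I| \leq C \|v_2-v_1\|_{L^2_{m_1}}$, so that $|D_2-D_1| \leq |I| + |E_1|$ inherits a $1/2$-power H\"older estimate in each datum. Finally, writing $P_2 - P_1 = \Psi(t_2,D_2) - \Psi(t_1,D_1)$ and invoking the H\"older continuity \eqref{ass_holder} of $\Psi$ on the bounded set containing the $D_i$ delivers \eqref{eq:stability_2} with a new exponent of order $\alpha^2/2$, still in $(0,1)$ as required.
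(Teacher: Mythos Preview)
Your argument is correct and rests on the same two pillars as the paper's proof: the strong convexity \eqref{eq:grad_monotony} of $L$ in $v$, and the monotonicity of $\Psi(t,\cdot)$ inherited from the convexity of $\Phi$. The organizational difference is the choice of test function. The paper subtracts the two instances of \eqref{eq:opti_cond_v} and pairs the difference with $v_2 m_2 - v_1 m_1$; this makes the monotonicity term appear directly (their term $(a_{12})\ge 0$) but forces them to control three error terms via Young's inequality with a common parameter $\varepsilon$, and to separately estimate $\|v_2 m_2 - v_1 m_1\|_{L^1}$ in terms of $\|v_2-v_1\|_{L^2_{m_1}}$ before absorbing. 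You instead pair with $(v_2-v_1)m_1$, which isolates the strong-convexity term immediately; the price is the cross-term $-\langle P_2-P_1, I\rangle$, which you handle by rewriting $I=(D_2-D_1)-E_1$, using monotonicity on $D_2-D_1$, and exploiting the a~priori bound $|P_2-P_1|\le C$ from Lemma~\ref{lemma:stat_existence} on the remainder. Both routes yield the same intermediate estimate $\|v_2-v_1\|_{L^2_{m_1}}^2 \le C(|t_2-t_1|^\alpha + \|w_2-w_1\|_{L^\infty}^2 + \|m_2-m_1\|_{L^1})$ and conclude identically; yours is marginally shorter because it sidesteps the $\varepsilon$-absorption step.
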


\begin{proof}
Note that all constants $C>0$ and all exponents $\alpha \in (0,1)$
involved below are independent of $t_1$, $t_2$, $w_1$, $w_2$, $m_1$,
and $m_2$. They are also independent of $x \in \statespace$ and
$\varepsilon > 0$.
For $i=1,2$, we set $v_i= \mathbf{v}(t_i,m_i,w_i)$ and $\phi_i= \phi(\cdot,t_i) \in L^\infty(\statespace)$. By \eqref{eq:bound_v_P}, we have
\begin{equation} \label{eq:bound_v_P_2}
\| v_i \|_{L^\infty(\statespace,\R^d)} \leq  C.
\end{equation}
By the optimality condition \eqref{eq:opti_cond_v}, we have that
\begin{equation} \label{eq:stat_v}
\phi_i^\intercal \Psi \big( t_i, \smint \phi_i v_i m_i \big)
+ L_v (t_i,v_i) + w_i= 0, \quad \text{for a.e. $x \in \statespace$}.
\end{equation}
Consider the difference of \eqref{eq:stat_v} for $i=2$ with \eqref{eq:stat_v} for $i=1$. Integrating with respect to $x$ the scalar product of the obtained difference with $v_2m_2 - v_1m_1$, we obtain that
$(a_1) + (a_2) + (a_3)= 0$,
where
\begin{align*}
(a_1)= \ & \int_{\statespace} \big\langle \phi_2^\intercal \Psi(t_2, \smint \phi_2 v_2 m_2) - \phi_1^\intercal \Psi(t_1, \smint \phi_1 v_1 m_1)  ,v_2 m_2 - v_1 m_1 \big\rangle \; \mathrm{d} x, \\
(a_2)= \ & \int_{\statespace} \langle L_v(t_2,v_2) - L_v(t_1,v_1), v_2 m_2 - v_1 m_1 \rangle \; \mathrm{d} x, \\
(a_3)= \ & \int_{\statespace} \langle w_2 - w_1, v_2 m_2 - v_1 m_1 \rangle \; \mathrm{d} x.
\end{align*}
We look for a lower estimate of these three terms.
Let us mention that the term $v_2 m_2 - v_1 m_1$, appearing in the three terms, will be estimated only at the end.

\emph{Estimation from below of $(a_1)$.} We have $(a_1)= (a_{11}) + (a_{12})$, where
\begin{align*}
(a_{11})= \ & \int_{\statespace} \big\langle \phi_2^\intercal \Psi(t_2,\smint \phi_2 v_2 m_2) - \phi_1^\intercal \Psi(t_1, \smint \phi_1 v_2 m_2), v_2 m_2 - v_1 m_1 \rangle \; \mathrm{d} x \\
(a_{12})= \ & \int_{\statespace} \big\langle \phi_1^\intercal \Psi(t_1, \smint \phi_1 v_2 m_2) - \phi_1^\intercal \Psi(t_1, \smint \phi_1 v_1 m_1 ), v_2 m_2 - v_1 m_1 \big\rangle \; \mathrm{d} x.
\end{align*}
By monotonicity of $\Psi$, we have that
\begin{equation*}
(a_{12})=  \Big\langle  \Psi(t_1, \smint \phi_1 v_2 m_2) - \Psi(t_1,
\smint \phi_1 v_1 m_1 ), \int_{\statespace} \phi_1 v_2 m_2 - \phi_1
v_1 m_1  \; \mathrm{d} x \Big\rangle \geq 0.
\end{equation*}
Let us consider $(a_{11})$. We set
\begin{equation*}
\begin{cases}
\begin{array}{rl}
\Psi_i= & \Psi(t_i, \smint \phi_i v_2 m_2), \quad \text{for $i=1,2$}, \\
\xi(x)= & \phi_2(x)^\intercal \Psi_2  - \phi_1(x)^\intercal \Psi_1,
\end{array}
\end{cases}
\end{equation*}
so that $(a_{11})= \int_{\statespace} \langle \xi, v_2 m_2 - v_1 m_1 \rangle \, \mathrm{d} x$.
Using assumption \eqref{ass_holder}, one can check that $|\Psi_i | \leq C$ and that $\big| \Psi_2 - \Psi_1 \big| \leq C|t_2- t_1|^\alpha$.
Since $\xi= (\phi_2-\phi_1)^\intercal \Psi_2 + \phi_1^\intercal (\Psi_2-\Psi_1)$, we obtain with assumption \eqref{ass_holder} again that
\begin{align*}
\| \xi \|_{L^\infty(\statespace,\R^d)} \leq C \big( |\Psi_2-\Psi_1| + \| \phi_2- \phi_1 \|_{L^\infty(\statespace,\R^{k \times d})} \big) \leq C |t_2-t_1|^\alpha
\end{align*}
and further with Young's inequality that
\begin{equation*}
|(a_{11})|
\leq \frac{C}{\varepsilon} |t_2-t_1|^\alpha + \frac{\varepsilon}{2} \| v_2 m_2 - v_1 m_1 \|_{L^1(\statespace,\R^d)}^2.
\end{equation*}

\emph{Estimation from below of $(a_2)$.} We have $(a_2)= (a_{21}) + (a_{22}) + (a_{23})$, where
\begin{align*}
(a_{21})= \ & \int_{\statespace} \langle L_v(t_2,v_2)- L_v(t_1,v_2), v_2 m_2 - v_1 m_1 \rangle \; \mathrm{d} x \\
(a_{22})= \ & \int_{\statespace} \langle L_v(t_1,v_2)- L_v(t_1,v_1), v_2(m_2-m_1) \rangle \; \mathrm{d} x \\
(a_{23}) = \ & \int_{\statespace} \langle L_v(t_1,v_2)- L_v(t_1,v_1), (v_2-v_1)m_1 \rangle \; \mathrm{d} x.
\end{align*}
As a consequence of \eqref{eq:bound_v_P_2}, assumption \eqref{ass_holder}, and Young's inequality, we have
\begin{align*}
|(a_{21})|
\leq \ & \frac{1}{2 \varepsilon} \| L_v(t_2,v_2(\cdot))-L_v(t_1,v_2(\cdot)) \|_{L^\infty(\statespace)}^2 + \frac{\varepsilon}{2} \| v_2 m_2 - v_1 m_1 \|_{L^1(\statespace,\R^d)}^2 \\
\leq  \ & \frac{C}{\varepsilon} |t_2 -t_1 |^\alpha + \frac{\varepsilon}{2} \| v_2 m_2 - v_1 m_1 \|_{L^1(\statespace,\R^d)}^2.
\end{align*}
By \eqref{eq:bound_v_P_2} and assumption \eqref{ass_holder}, $| L_v(t_1,x,v_i(x)) | \leq C$, therefore
\begin{equation*}
|(a_{22})| \leq C \| m_2 - m_1 \|_{L^1(\statespace,\R^d)}.
\end{equation*}
Finally, since $m_1 \geq 0$ and by assumption \eqref{eq:grad_monotony}, we have
\begin{equation*}
(a_{23}) \geq \frac{1}{C} \int_{\statespace} |v_2-v_1|^2 m_1 \; \mathrm{d} x.
\end{equation*}

\emph{Estimation from below of $(a_3)$.}
Using \eqref{eq:estim_v2m2-v1m1} and Young's inequality, we obtain that
\begin{equation*}
|(a_{3})|
\leq \frac{1}{2\varepsilon} \| w_2-w_1 \|_{L^\infty(\statespace,\R^d)}^2 + \frac{\varepsilon}{2} \| v_2 m_2 - v_1 m_1 \|_{L^1(\statespace,\R^d)}^2.
\end{equation*}

\noindent \emph{Conclusion.}
We have proved that
\begin{align}
& \frac{1}{C} \int_{\statespace} |v_2 -v_1|^2 m_1 \; \mathrm{d} x
\leq  (a_{23})  = (a_2) - (a_{21}) - (a_{22}) \notag \\
& \qquad \qquad = -(a_{1}) - (a_{21}) - (a_{22}) - (a_3) \notag \\
& \qquad \qquad \leq -(a_{11}) - (a_{21}) - (a_{22}) - (a_3) \notag \\
& \qquad \qquad \leq \frac{C}{\varepsilon} |t_2 -t_1 |^\alpha 
+ \frac{1}{2\varepsilon} \| w_2 - w_1 \|_{L^\infty(\statespace,\R^d)}^2
 \notag \\
& \qquad \qquad \qquad  + C \| m_2- m_1 \|_{L^1(\statespace,\R^d)} + \frac{3}{2} \varepsilon \| v_2 m_2 - v_1 m_1
                                                       \|_{L^1(\statespace;\R^d)}^2.
                                                       \label{eq:intermed_ineq}
\end{align}
Let us estimate $\| v_2 m_2 - v_1 m_1 \|_{L^1(\statespace;\R^d)}$. Using the Cauchy-Schwarz inequality, we obtain that
\begin{align}
& \| v_2 m_2 - v_1 m_1 \|_{L^1(\statespace,\R^d)}
\leq \| v_2(m_2-m_1) \|_{L^1(\statespace,\R^d)} + \| (v_2-v_1) m_1 \|_{L^1(\statespace,\R^d)} \notag \\
& \qquad \leq C \| m_2 - m_1 \|_{L^1(\statespace,\R^d)} + \Big( \int_{\statespace} |v_2- v_1 |^2 m_1 \; \mathrm{d} x \Big)^{1/2}. \label{eq:estim_v2m2-v1m1}
\end{align}
Injecting this inequality in \eqref{eq:intermed_ineq} 
and taking $\varepsilon= \frac{1}{3C}$, we obtain that
\begin{equation} \label{eq:stability_1}
\int_{\statespace} |v_2 -v_1|^2 m_1
\leq C \Big( |t_2 - t_1|^\alpha + \| m_2 - m_1 \|_{L^1(\statespace)} + \| w_2 - w_1 \|_{L^\infty(\statespace,\R^d)}^2 \Big).
\end{equation}
Let us prove \eqref{eq:stability_2}. We have
\begin{align*}
& \int_{\statespace} \phi_2 v_2 m_2 \; \mathrm{d} x - \int_{\statespace} \phi_1 v_1 m_1 \; \mathrm{d} x
= 
\int_{\statespace} (\phi_2-\phi_1) v_2 m_2 \; \mathrm{d} x \\
& \qquad \qquad
+ \int_{\statespace} \phi_1 v_2 (m_2-m_1) \; \mathrm{d} x
+ \int_{\statespace} \phi_1 (v_2-v_1) m_1 \; \mathrm{d} x.
\end{align*}
Therefore, using assumption \eqref{ass_holder} and \eqref{eq:stability_1}, we obtain that
\begin{align*}
& \Big| \int_{\statespace} \phi_2 v_2 m_2 \; \mathrm{d} x - \int_{\statespace} \phi_1 v_1 m_1 \; \mathrm{d} x \Big| \\
& \quad \leq C \Big( \| \phi_2-\phi_1 \|_{L^\infty(\statespace,\R^{k\times d})}
+ \| m_2 - m_1 \|_{L^1(\statespace)}
+ \Big( \int_{\statespace} |v_2-v_1|^2 m_1 \Big)^{1/2}
\Big) \\
& \quad \leq C \Big( |t_2-t_1|^{\alpha} + \| m_2 - m_1 \|_{L^1(\statespace)}^{1/2} + \| w_2 - w_1 \|_{L^\infty(\statespace,\R^d)} \Big).
\end{align*}
Inequality \eqref{eq:stability_2} follows, using assumption \eqref{ass_holder}. The lemma is proved.
\qed \end{proof}

Given $m \in L^\infty(0,T;\densities)$ and $w \in L^\infty(Q)$, we consider the Nemytskii operators associated with $\mathbf{v}$ and $\mathbf{P}$, that we still denote by $\mathbf{v}$ and $\mathbf{P}$ without risk of confusion:
\begin{align*}
& \mathbf{v}(m,w) \in L^\infty(Q,\R^d), && 
\mathbf{v}(m,w)(x,t) = \mathbf{v}(t,m(\cdot,t),w(\cdot,t))(x), \\
& \mathbf{P}(m,w) \in L^\infty(0,T;\R^k), &&
\mathbf{P}(m,w)(t)
= \mathbf{P}(t,m(\cdot,t),w(\cdot,t)),
\end{align*}
for all $(x,t) \in Q$.
We use now Lemma \ref{lemma:stability} to prove regularity properties
of the Nemytskii operators $\mathbf{v}$ and $\mathbf{P}$.
We recall that $X = \big( W^{2,1,p}(Q) \big)^2$.

\begin{lemma} \label{lemma:continuity_l_infty}
For all $R>0$, the mapping
\begin{align}
& (m,w) \in L^\infty(0,T;\densities) \times B \big( L^\infty(Q,\R^d),R) \notag \\
& \qquad \qquad \mapsto \mathbf{P}(m,w) \in L^\infty(0,T;\R^k) \label{eq:mapping_1}
\end{align}
and the mapping
\begin{align}
& (u,m) \in B(W^{2,1,p}(Q),R) \times L^\infty(0,T;\densities) \notag \\
& \qquad \qquad \mapsto \mathbf{v}(m,\nabla u) \in L^\infty(Q,\R^d) \cap L^p(0,T;W^{1,p}(\statespace))  \label{eq:mapping_2}
\end{align}
are both H\"older continuous, that is, there exist $\alpha \in (0,1)$ and $C>0$ such that
\begin{align*}
& \| \mathbf{P}(m_2,w_2)-\mathbf{P}(m_1,w_1) \|_{L^\infty(0,T;\R^k)} \\
& \qquad \qquad \leq C \big( \| m_2 - m_1 \|_{L^\infty(Q)}^\alpha + \| w_2 - w_1 \|_{L^\infty(Q)}^\alpha \big),\\[0.2em]
& \| \mathbf{v}(m_2,\nabla u_2)-\mathbf{v}(m_1,\nabla u_1) \|_{L^\infty(Q,\R^d) \cap L^p(0,T;W^{1,p}(\statespace))} \\
& \qquad \qquad \leq C \big( \| u_2 - u_1 \|_{W^{2,1,p}(Q)}^\alpha + \| m_2 - m_1 \|_{L^\infty(Q)}^\alpha \big),
\end{align*}
for all $m_1$ and $m_2 \in L^\infty(0,T;\densities)$, for all $w_1$ and $w_2 \in B(L^\infty(Q,\R^d),R)$, and for all $u_1$ and $u_2$ in $B(W^{2,1,p}(Q),R)$.
\end{lemma}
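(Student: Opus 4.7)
The plan is to reduce both claims to the pointwise stability estimate of Lemma~\ref{lemma:stability}, combined with (i) the finiteness of $|\statespace|$, which gives $\|m_2(\cdot,t)-m_1(\cdot,t)\|_{L^1(\statespace)} \leq C\|m_2 - m_1\|_{L^\infty(Q)}$ pointwise in $t$, and (ii) the maximal regularity embedding $W^{2,1,p}(Q) \hookrightarrow L^\infty(0,T;W^{1,\infty}(\statespace))$ for $p>d+2$ (Lemma~\ref{lemma:max_reg_embedding}), which controls $\nabla u$ and its differences in $L^\infty(Q)$ by the $W^{2,1,p}(Q)$ norm.

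\emph{Step 1 (continuity of $\mathbf{P}$).} Fix $t \in [0,T]$ and apply Lemma~\ref{lemma:stability} with $t_1 = t_2 = t$, $m_i = m_i(\cdot,t)$, $w_i = w_i(\cdot,t)$. Using $\|m_2(\cdot,t)-m_1(\cdot,t)\|_{L^1(\statespace)} \leq C \|m_2-m_1\|_{L^\infty(Q)}$, we get a pointwise estimate, and taking the supremum over $t$ yields the H\"older bound for the mapping \eqref{eq:mapping_1}.

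\emph{Step 2 ($L^\infty$ continuity of $\mathbf{v}(m,\nabla u)$).} By Lemma~\ref{lemma:stat_existence}, $\mathbf{v}(m,\nabla u)(x,t) = -H_p(x,t,q(x,t))$ with $q = \nabla u + \phi^\intercal \mathbf{P}(m,\nabla u)$. For $u \in B(W^{2,1,p}(Q),R)$, the embedding gives $\|\nabla u\|_{L^\infty(Q,\R^d)} \leq C R$ and a Lipschitz bound on $u \mapsto \nabla u$ as an $L^\infty(Q)$-valued map. Combining this with the H\"older continuity of $\mathbf{P}$ from Step~1 and the boundedness of $\phi$, the arguments $q_1,q_2$ stay in a common ball of $L^\infty(Q,\R^d)$ and satisfy
\[
\|q_2 - q_1\|_{L^\infty(Q,\R^d)} \leq C\bigl(\|u_2-u_1\|_{W^{2,1,p}(Q)}^\alpha + \|m_2-m_1\|_{L^\infty(Q)}^\alpha\bigr).
\]
The local H\"older continuity of $H_p$ on bounded sets (Lemma~\ref{lemma:reg_H}) then yields the $L^\infty(Q,\R^d)$ H\"older estimate for $\mathbf{v}(m,\nabla u)$.

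\emph{Step 3 ($L^p(0,T;W^{1,p}(\statespace))$ continuity).} Differentiating the identity $v = -H_p(x,t,q)$ in $x_j$ yields
\[
\partial_{x_j} v = -H_{px_j}(x,t,q) - H_{pp}(x,t,q)\bigl(\partial_{x_j}\nabla u + (\partial_{x_j}\phi)^\intercal P\bigr),
\]
which lies in $L^p(Q)$ since $D^2 u \in L^p(Q)$ and all other factors are in $L^\infty(Q)$. For the difference $\partial_{x_j}v_2 - \partial_{x_j}v_1$, split into (a) terms of the form $\bigl(H_{px_j}(q_2)-H_{px_j}(q_1)\bigr)$ and $\bigl(H_{pp}(q_2)-H_{pp}(q_1)\bigr)\cdot(\text{factor in } L^p(Q))$, whose $L^p(Q)$ norms are controlled by $C\|q_2 - q_1\|_{L^\infty(Q)}^\alpha$ via Lemma~\ref{lemma:reg_H}; and (b) the $L^p(Q)$ term $H_{pp}(q_1)\bigl(\partial_{x_j}(\nabla u_2 - \nabla u_1) + (\partial_{x_j}\phi)^\intercal(P_2 - P_1)\bigr)$, bounded by $C\bigl(\|u_2 - u_1\|_{W^{2,1,p}(Q)} + \|P_2-P_1\|_{L^\infty(0,T;\R^k)}\bigr)$ since $H_{pp}(q_1)$ is uniformly bounded. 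Inserting Step~1 closes the estimate.

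\emph{Main obstacle.} The main subtlety lies in Step~3, namely the interplay between the \emph{local} H\"older estimates of Lemma~\ref{lemma:reg_H} for $H_{px}$ and $H_{pp}$ and the $L^p$ control on $D^2 u$. One must first verify, using Step~2 and the $W^{2,1,p}(Q)$ bound on $u_i$, that the arguments $q_1,q_2$ stay in a common bounded set of $L^\infty(Q,\R^d)$, so that the H\"older constants of Lemma~\ref{lemma:reg_H} can be chosen uniformly; then H\"older's inequality is used to combine the $L^\infty(Q)$ estimates for the $H$-factors with the $L^p(Q)$ estimates for the derivative terms.
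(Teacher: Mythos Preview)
Your proposal is correct and follows essentially the same route as the paper's proof: first invoke Lemma~\ref{lemma:stability} pointwise in $t$ to get the $L^\infty(0,T;\R^k)$ H\"older estimate on $\mathbf{P}$, then write $\mathbf{v}(m,\nabla u)=-H_p(q)$ and $D_x\mathbf{v}(m,\nabla u)=-H_{px}(q)-H_{pp}(q)(\nabla^2 u + D\phi^\intercal P)$ with $q=\nabla u+\phi^\intercal \mathbf{P}(m,\nabla u)$, and conclude via the local H\"older continuity of $H_p$, $H_{px}$, $H_{pp}$ from Lemma~\ref{lemma:reg_H}. Your Step~3 merely spells out the product splitting that the paper leaves implicit in the sentence ``using then the relations \eqref{eq:formula_v} \ldots\ we obtain that the second mapping is H\"older continuous'', and your remark that the arguments $q_1,q_2$ must stay in a common bounded set (so the constants of Lemma~\ref{lemma:reg_H} are uniform) is exactly the point the paper is relying on.
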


\begin{proof}
The H\"older continuity of the first mapping is a direct consequence of Lemma \ref{lemma:stability}. As a consequence, the mapping
\begin{align*}
& (u,m) \in B(W^{2,1,p}(Q),R) \times L^\infty(0,T;\densities) \\
& \qquad \qquad \qquad \mapsto \nabla u + \phi^\intercal \mathbf{P}(m,\nabla u) \in L^\infty(Q,\R^d)
\end{align*}
is H\"older continuous. Using then the relations
\begin{equation} \label{eq:formula_v}
\begin{array}{rl}
\mathbf{v}(m,\nabla u) = & - H_p(\nabla u + \phi^\intercal \mathbf{P}(m,\nabla u)), \\
D_x \mathbf{v}(m,\nabla u) = & - H_{px}(\nabla u + \phi^\intercal \mathbf{P}(m,\nabla u)) \\
& \quad - H_{pp}(\nabla u + \phi^\intercal \mathbf{P}(m,\nabla u))(\nabla^2 u + D\phi^\intercal \mathbf{P}(m,\nabla u)),
\end{array}
\end{equation}
and the H\"older continuity of $H_p$, $H_{px}$, and $H_{pp}$ on
bounded sets (Lemma \ref{lemma:reg_H}), we obtain that the second mapping is H\"older continuous.
\qed \end{proof}

\begin{remark} \label{remark:technical}
As a consequence of Lemma \ref{lemma:continuity_l_infty}, the images of the mappings given by \eqref{eq:mapping_1} and \eqref{eq:mapping_2} are bounded. This fact will be used in the steps 3 and 5 of the proof of Proposition \ref{prop:fixed_point_estimate}.
\end{remark}

\begin{lemma} \label{lemma:continuity_holder_1}
Let $R>0$ and $\beta \in (0,1)$. Then, there exists $\alpha \in (0,1)$ and $C>0$ such that for all $u \in B(W^{2,1,p}(Q),R)$ and for all $m \in B(\mathcal{C}^{\beta}(Q),R) \cap L^\infty(0,T;\densities)$,
$\| \mathbf{P}(m,\nabla u) \|_{\mathcal{C}^\alpha(0,T;\R^k)} \leq C$.
\end{lemma}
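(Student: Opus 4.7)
The plan is to apply the quantitative stability estimate of Lemma~\ref{lemma:stability} directly to the fiberwise pair $\big(m(\cdot,t),\nabla u(\cdot,t)\big)$ and translate spatial-$L^\infty$ regularity of $\nabla u$ plus $\mathcal{C}^\beta$ regularity of $m$ into time-regularity of $t \mapsto \mathbf{P}(t,m(\cdot,t),\nabla u(\cdot,t))$. The uniform modulus comes for free from Lemma~\ref{lemma:stat_existence} (once we ensure $\|\nabla u\|_{L^\infty(Q,\R^d)}$ is bounded in terms of $R$), so the only work is controlling the two variable slots in the stability bound as $t_1 \to t_2$.

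First I would fix $R>0$ and $\beta \in (0,1)$ and take $u \in B(W^{2,1,p}(Q),R)$, $m \in B(\mathcal{C}^{\beta}(Q),R) \cap L^\infty(0,T;\densities)$. Since $p>d+2$, the embedding result recalled in the paper (Lemma~\ref{lemma:max_reg_embedding}) gives $\nabla u \in \mathcal{C}^{\gamma,\gamma/2}(Q,\R^d)$ for some $\gamma \in (0,1)$ with a norm bounded by a constant $C(R)$. In particular, there exists $R'=R'(R)$ with $\|\nabla u(\cdot,t)\|_{L^\infty(\statespace,\R^d)} \leq R'$ uniformly in $t$, and
\[
\|\nabla u(\cdot,t_2) - \nabla u(\cdot,t_1)\|_{L^\infty(\statespace,\R^d)} \leq C(R)\,|t_2-t_1|^{\gamma/2},
\quad t_1,t_2 \in [0,T].
\]
Likewise the assumption $m \in \mathcal{C}^{\beta}(Q)$ yields
\[
\|m(\cdot,t_2)-m(\cdot,t_1)\|_{L^\infty(\statespace)} \leq R\,|t_2-t_1|^{\beta},
\]
and, since $\statespace$ has finite measure, the same estimate holds in $L^1(\statespace)$ up to a multiplicative constant.

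Now apply Lemma~\ref{lemma:stability} with $t_i$, $m_i := m(\cdot,t_i)$, $w_i := \nabla u(\cdot,t_i)$, using $R'$ as the $L^\infty$-bound for $w_i$. There exist an exponent $\alpha_0 \in (0,1)$ and a constant $C = C(R,R')$ with
\[
\bigl|\mathbf{P}(t_2,m_2,w_2) - \mathbf{P}(t_1,m_1,w_1)\bigr|
\leq C\bigl(|t_2-t_1|^{\alpha_0} + \|w_2-w_1\|_{L^\infty}^{\alpha_0} + \|m_2-m_1\|_{L^1}^{\alpha_0}\bigr).
\]
Substituting the two moduli above, every term on the right-hand side is bounded by $C\,|t_2-t_1|^{\alpha}$ where $\alpha := \alpha_0 \cdot \min\{1,\gamma/2,\beta\} \in (0,1)$. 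Combined with the uniform pointwise bound $|\mathbf{P}(t,m(\cdot,t),\nabla u(\cdot,t))| \leq C$ from \eqref{eq:bound_v_P} (valid since $\|\nabla u(\cdot,t)\|_{L^\infty} \leq R'$), this yields $\|\mathbf{P}(m,\nabla u)\|_{\mathcal{C}^\alpha(0,T;\R^k)} \leq C$, with $\alpha$ and $C$ depending only on $R$ and $\beta$.

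The only subtlety is making sure the bound $R'$ on $\|\nabla u(\cdot,t)\|_{L^\infty}$ really is uniform in $u \in B(W^{2,1,p}(Q),R)$ and in $t$ — which is exactly the content of the embedding $W^{2,1,p}(Q) \hookrightarrow \mathcal{C}^{1+\gamma,(1+\gamma)/2}(Q)$ for $p>d+2$; everything else is a direct substitution into Lemma~\ref{lemma:stability}. No nontrivial obstacle is expected.
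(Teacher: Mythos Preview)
Your proof is correct and follows exactly the same approach as the paper: invoke Lemma~\ref{lemma:max_reg_embedding} to obtain uniform H\"older regularity of $\nabla u$ from the $W^{2,1,p}$ bound, then feed the time-sliced data $(t,m(\cdot,t),\nabla u(\cdot,t))$ into Lemma~\ref{lemma:stability}. The paper's proof is simply the two-line version of what you wrote out in detail.
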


\begin{proof}
We recall that by Lemma \ref{lemma:max_reg_embedding}, $\| \nabla u \|_{\mathcal{C}^{\alpha}(Q,\R^d)} \leq C \| u \|_{W^{2,1,p}(Q)}$.
We obtain then the bound on $\| \mathbf{P}(m,\nabla u) \|_{\mathcal{C}^\alpha(0,T;\R^k)}$ with Lemma \ref{lemma:stability}.
\qed \end{proof}

\begin{lemma} \label{lemma:continuity_holder_2}
Let $R>0$ and $\beta \in (0,1)$. There exist $\alpha \in (0,1)$ and $C>0$ such that for all $u \in B(\mathcal{C}^{2+\beta,1+\beta/2}(Q),R)$ and for all $m \in B(\mathcal{C}^\beta(Q),R) \cap L^\infty(0,T;\densities)$,
\begin{equation*}
\| \mathbf{v}(m,\nabla u) \|_{\mathcal{C}^{\alpha}(Q,\R^d)} \leq C \quad \text{and} \quad
\| D_x \mathbf{v}(m,\nabla u) \|_{\mathcal{C}^{\alpha}(Q,\R^{d \times d})} \leq C.
\end{equation*} 
\end{lemma}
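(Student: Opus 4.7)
The plan is to combine the explicit formulas \eqref{eq:formula_v} for $\mathbf{v}(m,\nabla u)$ and $D_x\mathbf{v}(m,\nabla u)$ with three ingredients: the H\"older time-regularity of $\mathbf{P}(m,\nabla u)$ from Lemma \ref{lemma:continuity_holder_1}, the H\"older regularity of $\nabla u$ and $\nabla^2 u$ coming from $u \in \mathcal{C}^{2+\beta,1+\beta/2}(Q)$, and the H\"older regularity on bounded sets of $H_p$, $H_{px}$, $H_{pp}$ from Lemma \ref{lemma:reg_H}. As usual, composing two H\"older maps yields a H\"older map with a possibly smaller exponent, so the final $\alpha$ will be the minimum of a finite number of exponents produced along the way.

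First, I would use the continuous embedding $\mathcal{C}^{2+\beta,1+\beta/2}(Q) \hookrightarrow W^{2,1,p}(Q)$ to bound $\|u\|_{W^{2,1,p}(Q)}$ in terms of $R$, which allows me to invoke Lemma \ref{lemma:continuity_holder_1}: there exists $\alpha_1 \in (0,1)$ and $C_1 > 0$, depending only on $R$ and $\beta$, such that $\|\mathbf{P}(m,\nabla u)\|_{\mathcal{C}^{\alpha_1}(0,T;\R^k)} \leq C_1$. Moreover, from $u \in B(\mathcal{C}^{2+\beta,1+\beta/2}(Q),R)$ I get uniform bounds on $\nabla u$ in $\mathcal{C}^{1+\beta,(1+\beta)/2}(Q,\R^d)$ (in particular in $\mathcal{C}^{\beta,\beta/2}(Q,\R^d)$) and on $\nabla^2 u$ in $\mathcal{C}^{\beta,\beta/2}(Q,\R^{d\times d})$.

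Next, set $p(x,t) := \nabla u(x,t) + \phi(x,t)^\intercal \mathbf{P}(m,\nabla u)(t)$. Since $\phi$ and $\mathbf{P}(m,\nabla u)$ are H\"older continuous (assumption \eqref{ass_holder} and the previous step), the product $\phi^\intercal \mathbf{P}(m,\nabla u)$ is H\"older continuous on $Q$; adding $\nabla u$ gives $p \in \mathcal{C}^{\alpha_2}(Q,\R^d)$ with $\|p\|_{\mathcal{C}^{\alpha_2}(Q,\R^d)} \leq C_2$ for some $\alpha_2 \in (0,1)$ and $C_2 > 0$ depending only on $R$ and $\beta$. In particular, $\|p\|_{L^\infty(Q,\R^d)} \leq C_2$, so Lemma \ref{lemma:reg_H} supplies exponents and constants for the H\"older regularity of $H_p$, $H_{px}$, and $H_{pp}$ on the bounded set $Q \times B(\R^d,C_2)$. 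Composing with $(x,t) \mapsto (x,t,p(x,t))$ then yields $H_p(\cdot,\cdot,p(\cdot,\cdot)) \in \mathcal{C}^\alpha(Q,\R^d)$, and analogously for $H_{px}$ and $H_{pp}$, with uniform bounds. The formula \eqref{eq:formula_v} for $\mathbf{v}(m,\nabla u)$ then yields the first bound directly.

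For the second bound, I treat the two summands in the expression for $D_x\mathbf{v}(m,\nabla u)$ separately. The first summand is $-H_{px}(\cdot,\cdot,p(\cdot,\cdot))$, already handled above. For the second summand, the factor $H_{pp}(\cdot,\cdot,p(\cdot,\cdot))$ is H\"older as just noted, and the factor $\nabla^2 u + D_x\phi^\intercal \mathbf{P}(m,\nabla u)$ is H\"older because $\nabla^2 u$ is H\"older, $D_x\phi$ is H\"older by \eqref{ass_holder}, and $\mathbf{P}(m,\nabla u)$ is H\"older in time; a product of bounded H\"older functions is H\"older. Taking $\alpha$ to be the smallest of the exponents produced yields the desired estimate. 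I do not see any serious obstacle here; the only care needed is to ensure that every quantity entering a composition or a product is indeed bounded and H\"older continuous on $Q$, which is why Lemma \ref{lemma:continuity_holder_1} (for the $t$-regularity of $\mathbf{P}$) and the assumption $u \in \mathcal{C}^{2+\beta,1+\beta/2}(Q)$ (rather than merely $W^{2,1,p}$) are both essential.
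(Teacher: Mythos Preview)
Your proposal is correct and follows essentially the same approach as the paper, which simply says that the result follows from relations \eqref{eq:formula_v}, Lemma \ref{lemma:continuity_holder_1}, and the H\"older continuity of $H_p$, $H_{px}$, and $H_{pp}$ on bounded sets. You have spelled out the details the paper leaves implicit, including the use of the embedding $\mathcal{C}^{2+\beta,1+\beta/2}(Q) \hookrightarrow W^{2,1,p}(Q)$ to access Lemma \ref{lemma:continuity_holder_1}.
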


\begin{proof}
The result follows from relations \eqref{eq:formula_v}, Lemma \ref{lemma:continuity_holder_1}, and from the H\"older continuity of $H_p$, $H_{px}$, and $H_{pp}$ on bounded sets.
\qed \end{proof}

\section{A priori estimates for fixed points} \label{section:apriori}

\begin{proposition} \label{prop:fixed_point_estimate}
There exist a constant $C>0$ and an exponent $\alpha \in (0,1)$ such that for all $\tau \in [0,1]$, for all $(u_\tau,m_\tau,v_\tau,P_\tau) \in X'$ satisfying \eqref{MFGC_tau},
\begin{align*}
m_\tau \in & \ \mathcal{C}^{2+\alpha,1+ \alpha/2}(Q), && \| m_\tau \|_{\mathcal{C}^{2+\alpha,1+ \alpha/2}(Q)} \leq C, \\
u_\tau \in & \ \mathcal{C}^{2+\alpha,1+ \alpha/2}(Q), && \| u_\tau \|_{\mathcal{C}^{2+\alpha,1+ \alpha/2}(Q)} \leq C, \\
P_\tau \in & \ \mathcal{C}^\alpha(0,T;\R^k), && \| P_\tau \|_{\mathcal{C}^\alpha(0,T;\R^k)} \leq C, \\
v_\tau \in & \ \mathcal{C}^{\alpha}(Q,\R^d), && \| v_\tau \|_{\mathcal{C}^{\alpha}(Q,\R^d)} \leq C, \\
D_x v_\tau \in & \ \mathcal{C}^{\alpha}(Q,\R^{d\times d}), && \| D_x v_\tau \|_{\mathcal{C}^{\alpha}(Q,\R^{d \times d})} \leq C.
\end{align*}
\end{proposition}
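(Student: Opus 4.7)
The plan is to proceed by bootstrap, starting from an energy estimate extracted from the potential formulation (Lemma \ref{lemma:var_princ_1}) and then upgrading regularity by alternating parabolic estimates on the Fokker--Planck and HJB equations of \eqref{MFGC_tau}, combined with the H\"older continuity results of Lemmas \ref{lemma:continuity_l_infty}--\ref{lemma:continuity_holder_2} for the auxiliary mappings $\mathbf{v},\mathbf{P}$. The first step is the energy estimate: by Lemma \ref{lemma:var_princ_1}, $(m_\tau,v_\tau)$ minimizes $B(\cdot,\cdot;\tilde f_\tau)$, so comparing with the admissible competitor $(m^h,0)$, where $m^h$ solves the pure heat equation with initial datum $m_0$, yields $B(m_\tau,v_\tau;\tilde f_\tau)\le C$ thanks to \eqref{ass_f_bounded} and \eqref{ass_holder}. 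Combined with the quadratic lower bound \eqref{L_quad_growth1}, the linear lower bound \eqref{eq:phi_linear_growth} on $\Phi$, Young's inequality and $\int m_\tau = 1$ (Lemma \ref{lemma:max_principle}), this will extract $\iint_Q |v_\tau|^2 m_\tau \le C$, and Cauchy--Schwarz \eqref{eq:jensen} together with \eqref{ass_Psi_lin_growth} then gives $\|P_\tau\|_{L^2(0,T;\R^k)}\le C$.

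The second and hardest step promotes this $L^2$ bound on $P_\tau$ into $L^\infty$ bounds on $u_\tau$ and $\nabla u_\tau$. The sup bound on $u_\tau$ will follow easily from the stochastic representation \eqref{cost}: the constant control $\alpha\equiv 0$ gives $u_\tau\le C$ using the boundedness of $L(\cdot,\cdot,0)$, $f$ and $g$, while the Fenchel--Young inequality $L(v)+\langle w,v\rangle\ge -H(w)\ge -C(|w|^2+1)$ (from \eqref{eq:v_eq_minus_Hp} and the quadratic growth of $L$) will provide the lower bound $u_\tau\ge -C(1+\|P_\tau\|_{L^2}^2)\ge -C$. The gradient bound $\|\nabla u_\tau\|_{L^\infty(Q)}\le C$ is the main obstacle: since the HJB is quasilinear with quadratic Hamiltonian, I will appeal to a Bernstein-type / quasilinear parabolic a priori estimate from the appendix, depending only on $\|u_\tau\|_\infty$, $\|P_\tau\|_{L^2}$, and the data \eqref{ass_f_bounded}, \eqref{ass_init_cond}.

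Once $\nabla u_\tau\in L^\infty(Q)$ is in hand, Remark \ref{remark:technical} (or directly Lemma \ref{lemma:continuity_l_infty}) will provide $\|P_\tau\|_{L^\infty(0,T;\R^k)}\le C$ and $\|v_\tau\|_{L^\infty(Q,\R^d)}\le C$. The Fokker--Planck equation then has bounded drift, so the parabolic $W^{2,1,p}$ estimate from the appendix together with \eqref{ass_init_cond} will yield $\|m_\tau\|_{W^{2,1,p}(Q)}\le C$; similarly, the HJB with bounded right-hand side gives $\|u_\tau\|_{W^{2,1,p}(Q)}\le C$. By Lemma \ref{lemma:max_reg_embedding}, both $u_\tau$ and $m_\tau$ will then be uniformly bounded in some $\mathcal{C}^{\alpha'}(Q)$ with $\alpha'\in(0,1)$.

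The final step bootstraps to the Schauder scale. Lemmas \ref{lemma:continuity_holder_1} and \ref{lemma:continuity_holder_2} will upgrade $P_\tau$, $v_\tau$ and $D_xv_\tau$ to $\mathcal{C}^\alpha$. Plugging these bounds together with \eqref{ass_hold_f} into the HJB produces a $\mathcal{C}^\alpha$ right-hand side and a $\mathcal{C}^\alpha$ Hamiltonian term (via Lemma \ref{lemma:reg_H}), so linear parabolic Schauder theory with terminal datum $\tau g\in\mathcal{C}^{2+\alpha}$ delivers the bound on $\|u_\tau\|_{\mathcal{C}^{2+\alpha,1+\alpha/2}(Q)}$. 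A second application of Lemma \ref{lemma:continuity_holder_2} then furnishes the H\"older bounds on $v_\tau$ and $D_xv_\tau$, and Schauder theory applied to the Fokker--Planck equation $(ii)$\eqref{MFGC_tau} with $\mathcal{C}^\alpha$ coefficients and initial datum $m_0\in\mathcal{C}^{2+\alpha}$ yields the analogous bound on $m_\tau$, completing the proof.
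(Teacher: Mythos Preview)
Your overall bootstrap strategy matches the paper's ten-step proof closely, and the energy estimate, the $L^\infty$ bound on $u_\tau$ via the stochastic representation, and the subsequent Schauder bootstrap are all correct. The genuine gap is at the gradient step. You invoke ``a Bernstein-type / quasilinear parabolic a priori estimate from the appendix'', but the appendix contains only \emph{linear} parabolic estimates (Theorems \ref{theo:max_reg1}, \ref{theo:max_reg2}, \ref{theo:holder_reg_classical}); there is no Bernstein-type result there. More to the point, a standard Bernstein/maximum-principle argument is problematic here because at this stage $P_\tau$ is only in $L^2(0,T;\R^k)$, so the effective Hamiltonian $H(\cdot,\cdot,\nabla u_\tau+\phi^\intercal P_\tau)$ has a time coefficient that is pointwise unbounded, and the usual bound on $\sup|\nabla u_\tau|^2$ does not close.

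The paper gets $\|\nabla u_\tau\|_{L^\infty(Q)}\le C$ by a probabilistic Lipschitz argument instead: pick an $\varepsilon$-optimal control $\tilde\alpha$ for $u_\tau(x,t)$, translate the resulting trajectory by $y-x$ to obtain a competitor for $u_\tau(y,t)$, and estimate the cost difference term by term. This is exactly where assumption \eqref{ass_L_Lipschitz} (Lipschitz in $x$ of $L$, uniformly in $v$), the Lipschitz continuity of $\phi$, $g$ and the $x$-Lipschitz part of \eqref{ass_hold_f} enter; the cross term $\int\langle(\phi(Y_s)-\phi(X_s))^\intercal P_\tau,\tilde\alpha_s\rangle\,\mathrm{d} s$ is controlled via Cauchy--Schwarz using only $\|P_\tau\|_{L^2}\le C$ and the $L^2$ bound on $\tilde\alpha$ inherited from $\|u_\tau\|_{L^\infty}$. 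One minor ordering point downstream: you cannot get $m_\tau\in W^{2,1,p}$ directly from ``bounded drift'', since in non-divergence form the Fokker--Planck equation has zeroth-order coefficient $\mathrm{div}(v_\tau)$, which requires $D_x v_\tau\in L^p$; the paper first obtains $u_\tau\in W^{2,1,p}$ from the HJB (right-hand side now bounded since $P_\tau\in L^\infty$), then uses \eqref{eq:formula_v} and Lemma \ref{lemma:continuity_l_infty} to get $D_x v_\tau\in L^p$, and only then bounds $m_\tau$.
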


\begin{proof}
Let us fix $\tau \in [0,1]$ and
$(u_\tau,m_\tau,v_\tau,P_\tau) \in X'$ satisfying
\eqref{MFGC_tau}. All constants $C$ and all exponents $\alpha \in
(0,1)$ involved below are independent of
$(u_\tau,m_\tau,v_\tau,P_\tau)$ and $\tau$. Let us recall that
$\tilde{f}_\tau \in L^\infty(Q)$ has been defined in Lemma
\ref{lemma:var_princ_1} by $\tilde{f}_\tau(x,t)= f(x,t,m_\tau(t))$.

\emph{Step 1:} $\| P_\tau \|_{L^2(0,T;\R^k)} \leq C$. \\
Let $v^0=0$ and let $m^0$ be the solution to
$\partial_t m^0 - \sigma \Delta m^0= 0$, $m^0(x,0) = m_0 (x)$.
By Lemma \ref{lemma:var_princ_1}, $B(m_\tau,v_\tau;\tilde{f}_\tau) \leq B(m^0,v^0;\tilde{f}_\tau)$.
Since $\| \phi \|_{L^\infty(Q,\R^{k \times d})} \leq C$, we have for all $\varepsilon >0$ and for all $t \in [0,T]$ that
\begin{align*}
& \Big| \int_{\statespace} \phi v_\tau m_\tau \; \mathrm{d} x \Big|
\leq C \int_{\statespace} |v_\tau| m_\tau \; \mathrm{d} x \\
& \qquad \leq C \Big( \int_{\statespace} |v_\tau|^2 m_\tau \; \mathrm{d} x \Big)^{1/2} 
\leq \frac{C}{\varepsilon} + C \varepsilon \int_{\statespace} |v_\tau|^2 m_\tau \; \mathrm{d} x,
\end{align*}
by the Cauchy-Schwarz inequality and Young's inequality.
The constant $C$ is also independent of $\varepsilon$.
Using then the lower bounds \eqref{L_quad_growth1} and
\eqref{eq:phi_linear_growth} and
assumptions \eqref{ass_f_bounded} and \eqref{ass_init_cond}, we obtain that
\begin{align*}
C \geq \ & B(m^0,v^0;\tilde{f}_\tau) \geq B(m_\tau,v_\tau;\tilde{f}_\tau) \\
\geq \ & \iint_Q \frac{1}{C} | v_\tau |^2 m_\tau \; \mathrm{d} x \; \mathrm{d} t
- C \Big| \int_{Q} \phi v_\tau m_\tau \; \mathrm{d} x \; \mathrm{d} t \Big| - C \\
\geq \ & \Big( \frac{1}{C} - C \varepsilon \Big) \iint_Q |v_\tau|^2 m_\tau \; \mathrm{d} x \; \mathrm{d} t - C\Big(1 + \frac{1}{\varepsilon} \Big).
\end{align*}
Taking $\varepsilon = 1 / (2C^2)$, we deduce that $\iint_Q |v_\tau|^2 m_\tau \, \mathrm{d} x \, \mathrm{d} t \leq C$.
Using then assumption \eqref{ass_Psi_lin_growth}, the boundedness of
$\phi$, the Cauchy-Schwarz inequality and the estimate obtained
previously, we deduce that
\begin{align}
\| P_\tau \|_{L^2(0,T;\R^k)}
= \ & \int_0^T | \Psi(t, \smint \phi v_\tau m_\tau) |^2 \; \mathrm{d} t
\leq C+C \int_0^T \Big| \int_{\statespace} \phi v_\tau m_\tau \; \mathrm{d} x \Big|^2 \mathrm{d} t \notag \\
\leq \ & C + C\iint_Q |v_\tau|^2 m_\tau \; \mathrm{d} x \; \mathrm{d} t \leq C. \label{eq:p_tau_l2}
\end{align}

\emph{Step 2:} $\| u_\tau \|_{L^\infty(Q)} \leq C$, $\| \nabla u_\tau \|_{L^\infty(Q,\R^d)} \leq C$.\\
The argument is classical. We have that $u_\tau$ is the unique solution to the HJB equation $(i)$\eqref{MFGC_tau}. It is therefore the value function associated with the following stochastic optimal control problem:
\begin{equation}\label{cost2}
u_\tau(x,t) = \tau \Big( \, \inf_{\alpha \in L_{\mathbb{F}}^2(t,T;\R^d)} J_\tau(x,t,\alpha) \Big),
\end{equation}
where $J_\tau(x,t,\alpha)$ is defined by
\begin{align*}
\E \Big[ \int_{t}^{T} \big( L(X_s,s, \alpha_s) +
\myprod{\phi(X_s,s)^\intercal P_\tau (s)}{\alpha_s} +  \tilde{f}_\tau(X_s,s) \big) \; \mathrm{d} s + g(X_T)  \Big],
\end{align*}
and
$(X_s)_{s \in [t,T]}$ is the solution to the stochastic dynamic $\mathrm{d} X_s = \tau \alpha_s \mathrm{d} s + \sqrt{2\sigma} \mathrm{d} B_s, \; X_t = x$.
Here, $L_{\mathbb{F}}^2(t,T;\R^d)$ denotes the set of stochastic processes on $(t,T)$, with values in $\R^d$, adapted to the filtration $\mathbb{F}$ generated by the Brownian motion $(B_s)_{s \in [0,T]}$, and such that
$\mathbb{E}\big[ \int_t^T |\alpha(s)|^2 \; \mathrm{d} s \big] < \infty$.
Then, the boundedness of $u_\tau$ from above can be immediately
obtained by choosing $\alpha = 0$ in \eqref{cost2} and using the
boundedness of $g$. We can as well bound $u_\tau$ from below since for
all $(x,s) \in Q$ and for all $\alpha \in \R^d$, we have
\begin{align*}
L(x,s,\alpha) + \myprod{\phi(x,s)^\intercal P_\tau (s)}{\alpha}
\geq \ &
\frac{1}{C} | \alpha |^2 - \| \phi \|_{L^\infty(Q,\R^{k \times d})}  | P_\tau (s) | | \alpha | -C \\
\geq \ & \frac{1}{C} | \alpha |^2 - C |P_\tau(s)|^2 -C,
\end{align*}
for some constant $C$ independent of $(x,s)$, $\alpha$, and $P_\tau(s)$.
We already know from the previous step that $ \| P_\tau \|_{L^2(0,T;\R^k)} \leq C$. So we can conclude that $u_\tau$ is also bounded from below, and thus $\| u_\tau \|_{L^\infty(Q)} \leq C$. We also deduce from the above inequality that for all $\alpha \in L_{\mathbb{F}}^2(t,T;\R^d)$,
\begin{equation} \label{eq:bound_norm_alpha}
\mathbb{E} \Big[ \int_t^T |\alpha_s|^2 \; \mathrm{d} s \Big] \leq C \big( J_\tau(x,t,\alpha) + 1 \big).
\end{equation}
 
Let us bound $\nabla u_\tau$. Choose $\varepsilon \in (0,1)$.
For  arbitrary $(x,t)$, take an $\varepsilon$-optimal stochastic optimal control $\tilde \alpha$ for \eqref{cost2}. We can deduce from the boundedness of the map $u_\tau$ and inequality \eqref{eq:bound_norm_alpha} that
\begin{equation} \label{eq:bound_alpha_tilde}
\mathbb{E} \Big[ \int_t^T |\tilde{\alpha}_s|^2 \; \mathrm{d} s \Big]
\leq C \big( J_\tau(x,t,\alpha) + 1 \big)
\leq C(u_\tau(x,t)+ \varepsilon + 1)
\leq C,
\end{equation}
where $C$ is independent of $(\tau,x,t)$ and $\varepsilon$.
Let $y \in \statespace$. Set
\begin{equation}
\mathrm{d} X_s = \tau \tilde \alpha_s \mathrm{d} s + \sqrt{2\sigma} \mathrm{d} B_s , \; X_t = x, \quad \text{and} \quad Y_s = X_s - x + y,
\end{equation}
then obviously $\mathrm{d} Y_s = \tilde \alpha_s \mathrm{d} s + \sqrt{2\sigma} \mathrm{d} B_s , \; Y_t = y$.
We have
\begin{align*}
u_\tau(x,t) + \varepsilon \geq &
\ \tau \E \Big[ \int_{t}^{T} L(X_s,s, \tilde\alpha_s) + \myprod{P_\tau (s)}{ \phi(X_s,s)^\intercal \tilde \alpha_s} \; \mathrm{d} s \\
& \qquad \qquad + \int_t^T \tilde{f}_\tau(X_s,s) \; \mathrm{d} s + g(X_T)  \Big],
\end{align*}
\begin{align*}
u_\tau(y,t) \leq & \ \tau \E \Big[ \int_{t}^{T} \Big( L(Y_s,s,
                   \tilde\alpha_s) +  \myprod{ \phi(Y_s , s)^\intercal
                   P_\tau (s)}{ \tilde \alpha_s} \; \mathrm{d} s \\
& \qquad \qquad  + \int_t^T \tilde{f}_\tau(Y_s,s) \Big)  \; \mathrm{d} s + g(Y_T)  \Big].
\end{align*}
Therefore,
$u_\tau(y,t) - u_\tau(x,t) \leq \varepsilon + |(a)| + |(b)| + |(c)|  + |(d)|$,
where $(a)$, $(b)$, $(c)$, $(d)$ are given by
\begin{align*}
(a)= \ & \tau \mathbb{E} \Big[ \int_t^T L(Y_s,s,\tilde{\alpha}_s) - L(X_s,s,\tilde{\alpha}_s) \; \mathrm{d} s \Big], \\
(b)= \ & \tau \mathbb{E} \Big[ \int_t^T \big( \phi(Y_s,s) - \phi(X_s,s) \big)^\intercal P_\tau(s), \tilde{\alpha}_s \rangle \; \mathrm{d} s \Big] , \\
(c)= \ & \tau \mathbb{E} \big[ g(Y_T)-g(X_T) \big], \\
  (d)= \ & \tau \mathbb{E} \Big[ \int_t^T \big(
           \tilde{f}_\tau(Y_s,s)  - \tilde{f}(X_s,s)  \big) \; \mathrm{d} s \Big].
\end{align*}
First, we have
\begin{align*}
|(a)| \leq \ & \tau
\mathbb{E} \Big[ \int_t^T \big| L(Y_s,s, \tilde\alpha_s) - L(X_s,s,\tilde\alpha_s) \big| \; \mathrm{d} s \Big] \\
\leq \ & C |y-x| \Big( 1 + \mathbb{E} \Big[ \int_t^T | \tilde{\alpha}_s |^2 \; \mathrm{d} s \Big] \Big)
\leq C |y-x|,
\end{align*}
as a consequence of assumption \eqref{ass_L_Lipschitz} and \eqref{eq:bound_alpha_tilde}.
Then, using assumption \eqref{ass_holder}, \eqref{eq:p_tau_l2}, and \eqref{eq:bound_alpha_tilde}, we obtain
\begin{align*}
|(b)| \leq \ & \tau \mathbb{E} \Big[ \int_t^T |\phi(Y_s , s)-\phi(X_s , s)| |P_\tau(s)| |\tilde{\alpha}(s)| \; \mathrm{d} s \Big] \\
\leq \ & C |y-x| \, \| P_\tau \|_{L^2(0,T;\R^k)} \, \mathbb{E} \Big[ \int_t^T |\tilde{\alpha}(s)|^2 \; \mathrm{d} s \Big]
\leq C |y-x|.
\end{align*}
By assumption \eqref{ass_init_cond},
$|(c)| \leq \mathbb{E} \big[ |g(Y_T)-g(X_T)| \big] \leq C |y-x|$.
Finally, since $\tilde{f}_\tau$ is a Lipschitz function (by assumption \eqref{ass_hold_f}),
\begin{align}
|(d)| \leq \ & \tau \mathbb{E} \Big[ \int_t^T \big| \tilde{f}_\tau(Y_s,s)  - \tilde{f}_\tau(X_s,s)  \big| \; \mathrm{d} s \Big]
               \leq C |y-x|.
\end{align}
Letting $\varepsilon \to 0$, we obtain that $u_\tau(y,t) - u_\tau(x,t) \leq C |y-x|$. Exchanging $x$ and $y$, we obtain that $u_\tau$ is Lipschitz continuous with modulus $C$ and finally that $\| \nabla u_\tau \|_{L^\infty(Q,\R^d)} \leq C$.

\emph{Step 3:} $\| P_\tau \|_{L^\infty(0,T;\R^k)} \leq C$.\\
By Lemma \ref{lemma:max_principle}, $m_\tau \in L^\infty(0,T;\densities)$.
We have that $\| \nabla u_\tau \|_{L^\infty(Q,\R^d)} \leq C$ and $P_\tau= \mathbf{P}(m_\tau,\nabla u_\tau)$.
The bound on $\| P_\tau \|_{L^\infty(0,T;\R^k)}$ follows then from Lemma \ref{lemma:continuity_l_infty} and Remark \ref{remark:technical}.

\emph{Step 4:} $\| u_\tau \|_{W^{2,1,p}(Q)} \leq C$.\\
By assumption \eqref{ass_holder}, $\phi$ is bounded. We have proved
that $\| P_\tau \|_{L^\infty(0,T;\R^k)} \leq C$ and by Lemma
\ref{lemma:reg_H}, $H$ is continuous. Thus, $\|
H(\nabla u_\tau + \phi^\intercal P_\tau) \|_{L^\infty(Q)} \leq C$.
By assumption \eqref{ass_f_bounded}, $\| \tau \tilde{f}_\tau \|_{L^\infty(Q)} \leq C$.
It follows that $u_\tau$, as the solution to the HJB equation $(i)$\eqref{MFGC_tau}, is the solution to a parabolic equation with bounded coefficients. Thus, by Theorem \ref{theo:max_reg2}, $\| u_\tau \|_{W^{2,1,p}(Q)} \leq C$. 
We also obtain with Lemma \ref{lemma:max_reg_embedding} that $\| u_\tau \|_{\mathcal{C}^\alpha(Q)} \leq C$ and $\| \nabla u_\tau \|_{\mathcal{C}^\alpha(Q,\R^d)} \leq C$.

\emph{Step 5:} $\| v_\tau \|_{L^\infty(Q,\R^d)} \leq C$, $\| D_x v_\tau \|_{L^p(Q,\R^{d \times d})} \leq C$. \\
We have proved that $v_\tau = \mathbf{v}(m_\tau,\nabla u_\tau)$ and $\| u_\tau \|_{W^{2,1,p}(Q)} \leq C$. The estimate follows directly with Lemma \ref{lemma:continuity_l_infty} and Remark \ref{remark:technical}. 

\emph{Step 6:} $\| m_\tau \|_{\mathcal{C}^\alpha(Q)} \leq C$. \\
The Fokker-Planck equation can be written in the form of a parabolic equation with coefficients in $L^p$:
$\partial_t m_{\tau} - \sigma \Delta m_\tau + \tau \langle v_\tau, \nabla m_\tau \rangle + \tau m_\tau \text{div}(v_\tau)= 0$,
since $\| D_x v_\tau \|_{L^p(Q,\R^{d \times d})} \leq C$.
Combining Theorem \ref{theo:max_reg1}
and Lemma \ref{lemma:max_reg_embedding},
we get that $\| m_\tau \|_{\mathcal{C}^\alpha(Q)} \leq C$.

\emph{Step 7:} $\| P_\tau \|_{\mathcal{C}^\alpha(0,T;\R^k)} \leq C$. \\
We already know that $\| u_\tau \|_{W^{2,1,p}(Q)} \leq C$, that $\| m_\tau \|_{\mathcal{C}^\alpha(Q)} \leq C$, and that $m_\tau \in L^\infty(0,T;\densities)$. Thus Lemma \ref{lemma:continuity_holder_1} applies and yields that $\| P_\tau \|_{\mathcal{C}^\alpha(0,T;\R^k)} \leq C$.

\emph{Step 8:} $\| u_\tau \|_{\mathcal{C}^{2+\alpha,1+ \alpha/2}(Q)} \leq C$. \\
We have proved that $\| \nabla u_\tau \|_{\mathcal{C}^\alpha(Q,\R^d)}
\leq C$ and $\| P_\tau \|_{\mathcal{C}^\alpha(0,T;\R^k)} \leq
C$. Moreover, we have assumed that $\phi$ is H\"older continuous and
know that $H$ is H\"older continuous on bounded sets.
It follows that
$\| H(\nabla u_\tau + \phi^\intercal P_\tau) \|_{\mathcal{C}^\alpha(Q)} \leq C$.
It follows from assumption \eqref{ass_hold_f} that $\tau \tilde{f}_\tau$ is H\"older continuous.
Since $g \in \mathcal{C}^{2+\alpha}(\statespace)$, we finally obtain that $\| u_\tau \|_{\mathcal{C}^{2+\alpha,1+ \alpha/2}(Q)} \leq C$, by Theorem \ref{theo:holder_reg_classical}.

\emph{Step 9:} $\| v_\tau \|_{\mathcal{C}^\alpha(0,T;\R^{d})} \leq C$ and $\| D_x v_\tau \|_{\mathcal{C}^\alpha(0,T;\R^{d \times d})} \leq C$. \\
We have $\| u_\tau \|_{\mathcal{C}^{2+\alpha,1+\alpha/2}(Q)} \leq C$ and $\| m_\tau \|_{\mathcal{C}^\alpha(Q)} \leq C$. Thus Lemma \ref{lemma:continuity_holder_2} applies and the announced estimates hold true.

\emph{Step 10:} $\| m_\tau \|_{\mathcal{C}^{2+ \alpha, 1 + \alpha /2}(Q)} \leq C$. \\
A direct consequence of Step 9 is that $m_\tau$ is the solution to a parabolic equation with H\"older continuous coefficients. Therefore $\| m_\tau \|_{\mathcal{C}^{2+\alpha,1 + \alpha/2}(Q)} \leq C$, by Theorem \ref{theo:holder_reg_classical}, which concludes the proof of the proposition.
\qed \end{proof}

\section{Application of the Leray-Schauder theorem} \label{section:leray-schauder}

\begin{proof}[Theorem \ref{theo:main}]
\emph{Step 1:} construction of $\mathcal{T}$. \\
Let us define the mapping $\mathcal{T} \colon X \times [0,1] \rightarrow X$ which is used for the application of the Leray-Schauder theorem.
A difficulty is that the auxiliary mappings $\mathbf{P}$ and $\mathbf{v}$ are only defined for $m \in L^\infty(0,T;\densities)$. Therefore we need a kind of projection operator on this set.
Note that
$\int_{\statespace} 1 \, \mathrm{d} x = 1$. We consider the mapping
\begin{equation*}
\rho: m \in L^\infty(Q) \mapsto \rho(m) \in L^\infty(0,T;\densities),
\end{equation*}
defined by
\begin{equation*}
\rho(m)= \frac{m_+(x,t)}{\max ( 1, \smint m_+(y,t) \; \mathrm{d} y )}
+ 1 - \frac{\smint m_+(y,t) \; \mathrm{d} y}{\max ( 1, \smint m_+(y,t) \; \mathrm{d} y )},
\end{equation*}
where $m_+(x,t)= \max(0,m(x,t))$.
For checking that $\rho(m) \in L^\infty(0,T;\densities)$, we suggest to consider the two cases: $\smint m_+(y,t) \; \mathrm{d} y < 1$ and $\smint m_+(y,t) \; \mathrm{d} y \geq 1$ separetely.
The following properties can be easily checked:
\begin{itemize}
\item For all $m \in L^\infty(0,T;\densities)$, $\rho(m)=m$.
\item The mapping $\rho$ is locally Lipschitz continuous, from $L^\infty(Q)$ to $L^\infty(Q)$.
\item For all $\alpha \in (0,1)$, there exists a constant $C>0$ such that if $m \in \mathcal{C}^\alpha(Q)$, then $\rho(m) \in \mathcal{C}^\alpha(Q)$ and $\| \rho(m) \|_{\mathcal{C}^{\alpha}(Q)} \leq C \| m \|_{\mathcal{C}^{\alpha}(Q)}$.
\end{itemize}

For a given $(u,m,\tau) \in X \times [0,1]$, the pair $(\tilde{u},\tilde{m})= \mathcal{T}(u,m,\tau)$ is defined as follows: $\tilde{u}$ is the solution to
\begin{equation*}
\begin{cases}
\begin{array}{rll}
-\partial_t \tilde{u} -\sigma \Delta \tilde{u} + \tau H(\nabla u +
  \phi^\intercal \mathbf{P}(\rho(m),\nabla u) ) = & \tau f(\rho(m(t))) \quad &(x,t) \in Q, \\
\tilde{u}(x,T) = & \tau g(x) \quad & x \in \statespace,
\end{array}
\end{cases}
\end{equation*}
and $\tilde{m}$ is the solution to
\begin{equation*}
\begin{cases}
\begin{array}{rll}
\partial_t \tilde{m} - \sigma \Delta \tilde{m} + \tau \mathrm{div} (\mathbf{v}(\rho(m),\nabla \tilde{u}) m )= & 0 \quad &(x,t) \in Q, \\
\tilde{m}(x,0)= & m_0(x) & x \in \statespace.
\end{array}
\end{cases}
\end{equation*}
It directly follows from the definition of $\mathcal{T}$ that $\mathcal{T}(u,m,0)$ is constant, as required by the Leray-Schauder theorem.

\emph{Step 2:} a priori bound. \\
Let $\tau \in [0,1]$ and let $(u_\tau,m_\tau)$ be such that $(u_\tau,m_\tau)= \mathcal{T}(u_\tau,m_\tau,\tau)$. Then, by Lemma \ref{lemma:max_principle}, $m_\tau \in L^\infty(0,T;\densities)$. Thus, $m_\tau= \rho(m_\tau)$ and finally, by Lemma \ref{lemma:stat_existence}, the quadruplet $(u_\tau,m_\tau,P_\tau,v_\tau)$, with $P_\tau= \mathbf{P}(m_\tau,\nabla u_\tau)$ and $v_\tau= \mathbf{v}(m_\tau,\nabla u_\tau)$, is a solution to \eqref{MFGC_tau}. We directly conclude with Proposition \ref{prop:fixed_point_estimate} that $\| (u_\tau,m_\tau) \|_X \leq C$, where $C$ is independent of $\tau$.

\emph{Step 3:} continuity of $\mathcal{T}$. \\
Using the continuity of $\rho$, Lemma \ref{lemma:continuity_l_infty}, the H\"older continuity of $H$, and assumption \eqref{ass_hold_f}, we obtain that the mappings
\begin{align*}
& (u,m) \in X \mapsto H(\nabla u + \phi^\top \mathbf{P}(\rho(m),\nabla u)) - f(\rho(m)) \in L^\infty(Q), \\
& (u,m) \in X \mapsto \text{div}(\mathbf{v}(\rho(m),\nabla u)m) \in L^p(Q)
\end{align*}
are continuous.
By Theorem \ref{theo:max_reg2}, the solution to a parabolic equation of the form \eqref{eq:parabolic}, with $b$ and $c$ null (in $W^{2,1,p}(Q)$) is a continuous mapping of the right-hand side (in $L^p(Q)$).
Thus, $\tilde{u} \in W^{2,1,p}(Q)$ depends in a continuous way on $\tau H(\nabla u + \phi^\top \mathbf{P}(\rho(m),\nabla u))$ and therefore $\tilde{u}$ depends in a continuous way on $(\tau,u,m)$ by composition.
Again, by Theorem \ref{theo:max_reg2}, $\tilde{m} \in W^{2,1,p}(Q)$ depends in a continuous way on $\tau \text{div}(\mathbf{v}(\rho(m),\nabla \tilde{u})m)$ and therefore depends in a continuous way on $(\tau,u,m)$.

\emph{Step 4:} compactness of $\mathcal{T}$. \\
Let $R>0$, let $(u,m) \in B(X,R)$.
We have $\| \rho(m) \|_{\mathcal{C}^{\alpha}(Q)} \leq C$, where $C$ is independent of $(u,m)$ (but depends on $R$).
As a consequence of assumption \eqref{ass_hold_f}, and since $H$ is H\"older continuous on bounded sets, we have
\begin{align*}
& \| H(\nabla u + \phi^\intercal \mathbf{P}(\rho(m),\nabla u))- f(\rho(m)) \|_{\mathcal{C}^\alpha(Q)} \leq C, \\
\end{align*}
where $C>0$ and $\alpha \in (0,1)$ are both independent of $(u,m)$ (but depend on $R$). It follows then that $\| u \|_{\mathcal{C}^{2+\alpha,1+\alpha/2}(Q)} \leq C$ by Theorem \ref{theo:holder_reg_classical}.
Using Lemma \ref{lemma:continuity_holder_2}, we deduce then that
\begin{equation*}
\| \text{div}(\mathbf{v}(\rho(m),\nabla \tilde{u}) m) \|_{\mathcal{C}^\alpha(Q)} \leq C,
\end{equation*}
and finally obtain that $\| m \|_{\mathcal{C}^{2+\alpha,1+ \alpha/2}(Q)} \leq C$, by Theorem \ref{theo:holder_reg_classical} again.
The compactness of $\mathcal{T}$ follows, since $\mathcal{C}^{2+\alpha,1+\alpha/2}(Q)$ is compactly embedded in $W^{2,1,p}(Q)$, by the Arzel{\`a}-Ascoli theorem.

\emph{Step 5:} conclusion. \\
The existence of a fixed point $(u,m)$ to $\mathcal{T}(\cdot,\cdot,1)$ follows. With the same arguments as those of Step 2, we obtain that $(u,m,\mathbf{P}(m,\nabla u),\mathbf{v}(m,\nabla u))$ is a solution to \eqref{MFGC_tau} with $\tau=1$ and that \eqref{eq:main} holds, by Proposition \ref{prop:fixed_point_estimate}.
\qed \end{proof}

\section{Uniqueness and duality} \label{section:duality}

In this section we prove the uniqueness of the solution $(u,m,v,P)$ to \eqref{MFGC}. We also prove that $(P,v)$ is the solution to a dual problem to \eqref{eq:primal_problem}.
Both results are obtained under the following additional monotonicity assumption of $f$:
There exists a measurable mapping $F(t,m) \colon [0,T]\times \densities \to \R$
such that
\begin{equation}
\label{subdifF}
F(t,m_2) - F(t,m_1) \geq \int_{ \statespace} f(x,t,m_1) (m_2(x)-m_1(x)) \; \mathrm{d} x,
\end{equation}
for all $m_1$ and $m_2 \in \densities$ and for a.e.\@ $t$.
Thus, $F(t,\cdot)$ is a supremum of the exact affine minorants appearing in the above right-hand side, and is therefore a convex function of $m$.

\begin{remark} \label{rem:pot}
\begin{enumerate}
\item It follows from \eqref{subdifF} that $f$ is monotone:
\begin{equation} \label{eq:monotony_f}
\int_{ \statespace } (f(x,t,m_2)- f(x,t,m_2)) (m_2(x) - m_1(x)) \; \mathrm{d} x
\geq 0,
\end{equation}
for all $m_1$ and $m_2 \in \densities$ and for a.e.\@ $t$.
Conversely, \eqref{subdifF} holds true if \eqref{eq:monotony_f} is satisfied and if $F$ is a primitive of $f(.,t,.)$ in the sense that
\begin{equation*}
F(t,m_2) - F(t,m_1) = \int_{0}^{1} \int_{\statespace} f(x,t, sm_2 + (1-s)m_1) (m_2(x) - m_1(x)) \; \mathrm{d} s.
\end{equation*}
We refer to \cite[Proposition 1.2]{CDHD2015} for a further characterization of functions $f$ deriving from a potential.
\item Consider the mapping $f_K$ proposed in Lemma
  \ref{lemma:ex_f}. Assume that for all $(x,t) \in Q$, $K(x,t,\cdot)$
  is non-decreasing and consider the function $\mathcal{K}$ defined
  by
  $\mathcal{K}(x,t,w)  := \int_0^w K(x,t,w') \; \mathrm{d} w'$, for $(x,t,w) \in Q \times [-C_2,C_2]$. Then inequality \eqref{subdifF} holds true with $F_K$ defined by
\begin{equation*}
F_K(t,m)= \int_{\statespace} \mathcal{K} (x,t,m * \varphi(x)) \; \mathrm{d} x.
\end{equation*}
Indeed, since $\mathcal{K}$ is convex in its third argument, we have
\begin{align*}
& F_K(t,m_2)-F_K(t,m_1)
= \int_{\statespace} \mathcal{K}(x,t,m_2 * \varphi(x)) - \mathcal{K}(x,t,m_1 * \varphi(x)) \; \mathrm{d} x \\
& \qquad \qquad \geq \int_{\statespace} K(x,t,m_1* \varphi(x))((m_2-m_1)*\varphi)(x) \; \mathrm{d} x \\
& \qquad \qquad = \int_{\statespace} (K(\cdot,t,m_1*\varphi(\cdot))* \tilde{\varphi})(x) (m_2(x) - m_1(x)) \; \mathrm{d} x \\
& \qquad \qquad = \int_{\statespace} f_K(x,t,m)(m_2(x)-m_1(x)) \; \mathrm{d} x,
\end{align*}
as was to be proved.
\end{enumerate}
\end{remark}

Without loss of generality, we can assume that $F(t,m_0)=0$ for a.e.\@ $t \in (0,T)$. It can then be easily deduced from assumption \eqref{ass_f_bounded} and \eqref{subdifF} that there exists a constant $C$ such that
\begin{equation} \label{eq:F_bounded}
|F(t,m)| \leq C, \quad \forall m \in \densities, \text{ for a.e.\@ $t \in (0,T)$}.
\end{equation}
Let us consider the potential $B \colon W^{2,1,p}(Q) \times L^\infty(Q;\R^k) \rightarrow \R$, defined by
\begin{align}
& B (m,v) =
\iint_Q L ( x,t, v(x,t)) m(x,t) \; \mathrm{d} x \; \mathrm{d} t + \int_0^T F(t,m(t)) \; \mathrm{d} t \notag \\
& \qquad + \int_0^T \Phi \Big(t, \int_{\statespace} \phi(x,t) v(x,t)  m(x,t) \; \mathrm{d} x \Big) \, \mathrm{d} t
+ \int_{\statespace} g(x) m(x,T) \; \mathrm{d} x. \label{potential2}
\end{align}

\begin{proposition}
There exists a unique solution $(u,m,v,P) \in X'$ to \eqref{MFGC}. Moreover, the pair $(m,v)$ is the solution to the following optimal control problem
\begin{equation} \label{eq:primal_problem_2}
\min_{\begin{subarray}{c} \hat{m} \in W^{2,1,p}(Q) \\ \hat{v} \in
    L^\infty(Q,\R^d) \end{subarray}} \
B(\hat{m},\hat{v}), \quad
\text{s.t.: }
\begin{cases}
\begin{array}{rl}
\partial_t \hat{m} - \sigma \Delta \hat{m} + \mathrm{div} (\hat{v} \hat{m})= & 0, \\
\hat{m}(x,0)= & m_0(x).
\end{array}
\end{cases}
\end{equation}
\end{proposition}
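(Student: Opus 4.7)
The plan is to prove the potential formulation first and then derive uniqueness via a duality-type PDE computation in the spirit of the Lasry--Lions argument.

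For the potential formulation, fix a solution $(u,m,v,P)$ of \eqref{MFGC} and a pair $(\hat m,\hat v)\in W^{2,1,p}(Q)\times L^\infty(Q,\R^d)$ satisfying the Fokker--Planck constraint of \eqref{eq:primal_problem_2}. I would freeze $\tilde f(x,t):=f(x,t,m(t))$. Lemma \ref{lemma:var_princ_1} applied at $\tau=1$ yields $B(m,v;\tilde f)\le B(\hat m,\hat v;\tilde f)$. Comparing the definitions \eqref{potential} and \eqref{potential2}, the discrepancy between the two notions of $B$ reduces to
\[
\bigl[B(\hat m,\hat v)-B(m,v)\bigr]-\bigl[B(\hat m,\hat v;\tilde f)-B(m,v;\tilde f)\bigr]=\int_0^T\Bigl[F(t,\hat m(t))-F(t,m(t))-\int_{\statespace}f(x,t,m(t))(\hat m-m)\,\mathrm{d}x\Bigr]\,\mathrm{d}t,
\]
which is non-negative by \eqref{subdifF}. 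Adding the two inequalities gives $B(m,v)\le B(\hat m,\hat v)$, proving optimality.

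For uniqueness, take two solutions $(u_i,m_i,v_i,P_i)$ and set $p_i:=\nabla u_i+\phi^\intercal P_i$. Computing $\tfrac{\mathrm d}{\mathrm dt}\int_{\statespace}(u_1-u_2)(m_1-m_2)\,\mathrm{d}x$ from the HJB and Fokker--Planck equations, using integration by parts in $x$, and then integrating over $[0,T]$ (time-boundary terms vanish since $m_1(0)=m_2(0)=m_0$ and $u_1(T)=u_2(T)=g$), one obtains, after the decomposition $\nabla(u_1-u_2)=(p_1-p_2)-\phi^\intercal(P_1-P_2)$, the identity
\[
\iint_Q(f(m_1)-f(m_2))(m_1-m_2)=\iint_Q\bigl[(H(p_1)-H(p_2))(m_1-m_2)+\langle p_1-p_2,v_1m_1-v_2m_2\rangle\bigr]-\int_0^T\bigl\langle P_1-P_2,\smint\phi(v_1m_1-v_2m_2)\,\mathrm{d}x\bigr\rangle\,\mathrm{d}t.
\]
The crucial step is the pointwise inequality
\[
(H(p_1)-H(p_2))(m_1-m_2)+\langle p_1-p_2,v_1m_1-v_2m_2\rangle\le -\tfrac{m_1+m_2}{2C}|v_1-v_2|^2,
\]
which follows from the sharpened Fenchel--Young bound $L(v_j)+H(p_i)+\langle p_i,v_j\rangle\ge\tfrac{1}{2C}|v_i-v_j|^2$, itself a consequence of the strong convexity \eqref{eq:grad_monotony} together with the optimality relation $L_v(v_i)=-p_i$: multiplying the $(i,j)=(1,2)$ instance by $m_1\ge 0$ and the $(i,j)=(2,1)$ instance by $m_2\ge 0$, eliminating the $L(v_i)$ terms via the equality case $L(v_i)+H(p_i)+\langle p_i,v_i\rangle=0$, and summing produces exactly the claimed bound.

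The monotonicity of $\Psi$ (from convexity of $\Phi$ in \eqref{eq:Phi_convex}) applied at $z_i=\smint\phi v_im_i$ gives $\int_0^T\langle P_1-P_2,\smint\phi(v_1m_1-v_2m_2)\rangle\,\mathrm{d}t\ge 0$, so the right-hand side of the identity is non-positive, while the left-hand side is non-negative by \eqref{eq:monotony_f}. All three contributions must therefore vanish. In particular $(m_1+m_2)|v_1-v_2|^2\equiv 0$, so the common value of $v_i$ on $\{m_1+m_2>0\}$ defines a bounded field $v$ with $v_im_i=vm_i$; subtracting the two Fokker--Planck equations yields a linear parabolic equation for $m_1-m_2$ with null initial data, hence $m_1=m_2$ by standard uniqueness. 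Then $P_1=P_2$ by $(iii)$\eqref{MFGC}, and injectivity of $H_p$ (since $L_v$ is strictly monotone, $p=-L_v(v)$ is determined by $v$) together with uniqueness for the HJB equation with frozen $(m,P)$ forces $u_1=u_2$ and $v_1=v_2$. The main obstacle is the algebraic assembly of the sharpened Fenchel--Young inequalities into exactly the two terms appearing in the duality identity, allowing the $\phi^\intercal(P_1-P_2)$ correction to separate cleanly so that it can be absorbed via the monotonicity of $\Psi$.
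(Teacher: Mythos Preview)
Your argument is correct. The potential-formulation part is exactly the paper's approach. For uniqueness, however, you take a different route.

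The paper does not compute $\tfrac{\mathrm d}{\mathrm dt}\int (u_1-u_2)(m_1-m_2)$. Instead it reuses the potential structure: since both $(m_i,v_i)$ are minimizers of $B$, redoing the proof of Lemma~\ref{lemma:var_princ_1} with the sharpened Fenchel--Young inequality \eqref{eq:strong_duality} in place of the ordinary one yields $B(m_2,v_2)-B(m_1,v_1)\ge\tfrac{1}{2C}\iint|v_2-v_1|^2 m_2$, which must vanish by optimality of $(m_2,v_2)$. From $(v_2-v_1)m_2=0$ it gets $v_2m_2-v_1m_1=v_1(m_2-m_1)$ and concludes as you do. In this approach the monotonicity of $\Psi$ never appears explicitly: it is hidden inside the convexity of $\Phi$ already used in the variational inequality~\eqref{eq:form_var2}. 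Your Lasry--Lions computation is the more classical PDE route and has the merit of making each sign contribution (the $f$-monotonicity, the strong convexity of $L$, and the monotonicity of $\Psi$) visible separately; the paper's route is slightly shorter because the $\Psi$-term is already absorbed in $B$. One minor remark: at the end you do not need injectivity of $H_p$ --- once $m_1=m_2$ and $P_1=P_2$, uniqueness for the HJB equation gives $u_1=u_2$, and then relation $(iv)$ gives $v_1=v_2$ directly.
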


\begin{proof}
Let $(u,m,v,P) \in X'$ be a solution to \eqref{MFGC}. Let us prove that $(m,v)$ is a solution to \eqref{eq:primal_problem_2}. Let $(\hat{m},\hat{v})$ be a feasible pair. Denoting $\tilde{f}(x,t)= f(x,t,m(t))$, we have 
\begin{align*}
& B(\hat{m},\hat{v})-B(m,v)
= \big( B(\hat{m},\hat{v};\tilde{f})- B(m,v;\tilde{f}) \big) \\
& \qquad + \Big( \int_0^T F(t,\hat{m}(t))- F(t,m(t)) - \int_{\statespace} \tilde{f}(x,t) (\hat{m}(x,t)-m(x,t)) \; \mathrm{d} x \; \mathrm{d} t \Big).
\end{align*}
The two terms in the right-hand side are both nonnegative, as a consequence of Lemma \ref{lemma:var_princ_1} and assumption \eqref{subdifF}, respectively.

It remains to prove the uniqueness of the solution to \eqref{MFGC}.
Let us prove first a classical property: There exists a constant $C>0$ such that for all $(x,t) \in Q$, for all $p \in \R^d$ and for all $v \in \R^d$,
\begin{equation} \label{eq:strong_duality}
H(x,t,p) + L(x,t,v) + \langle p, v \rangle \geq \frac{1}{2C} |v + H_p(x,t,p)|^2.
\end{equation}
Let us set $\bar{v}= - H_p(x,t,p)$. For a fixed triple $(x,t,p)$, we have $H(x,t,p)= - \langle p,\bar{v} \rangle - L(x,t,\bar{v})$. Moreover, $L_v(x,t,\bar{v})= -p$ and thus by \eqref{eq:grad_monotony},
\begin{equation*}
L(x,t,v) \geq L(x,t,\bar{v}) - \langle p, v- \bar{v} \rangle + \frac{1}{2C} |v-\bar{v}|^2.
\end{equation*}
Inequality \eqref{eq:strong_duality} follows.

Let $(u_1,m_1,v_1,P_1)$ and $(u_2,m_2,v_2,P_2)$ be two solutions to \eqref{MFGC} in $X'$. We obtain with inequality \eqref{eq:strong_duality} that
\begin{align*}
L (v_2) \geq \ & - H(\nabla u_1 + \phi^\intercal \price_1) - \myprod{\nabla u_1 + \phi^\intercal \price_1}{v_2} + \frac{1}{2C} |v_2-v_1|^2,  \\
L (v_1 ) = \ & - H(\nabla u_1 + \phi^\intercal \price_1 ) - \myprod{\nabla u_1 + \phi^\intercal \price_1 }{v_1}.
\end{align*}
Proceeding then exactly like in the proof of Lemma \ref{lemma:var_princ_1}, we arrive at the following inequality:
\begin{equation*}
B(m_2,v_2)-B(m_1,v_1) \geq \frac{1}{2C} \iint_Q |v_2 - v_1 |^2 m_2 \; \mathrm{d} x \; \mathrm{d} t.
\end{equation*}
We also have that $B(m_1,v_1)-B(m_2,v_2) \geq 0$, thus $\iint_Q |v_2 - v_1 |^2 m_2 \; \mathrm{d} x \; \mathrm{d} t= 0$. As a consequence, $(v_2-v_1)m_2= 0$, since $m_2 \geq 0$. We obtain then that
\begin{equation} \label{eq:div_term}
v_2m_2 - v_1m_1= v_1(m_2-m_1).
\end{equation}
Let us set $m= m_2- m_1$. Using relation \eqref{eq:div_term}, we obtain that $m$ is the solution to the following parabolic equation: $\partial_t m - \sigma \Delta m + \text{div}(v_1 m)= 0$, $m(x,0)=0$.
Therefore $m=0$ and $m_2= m_1$. We already know that $v_2m_2= v_1m_2$, we deduce then that $v_2m_2= v_1m_1$. We obtain further with $(iii)$ that $P_1= P_2$, then with $(i)$ that $u_1= u_2$ and finally with $(iv)$ that $v_1= v_2$, which concludes the proof.
\qed \end{proof}

We finish this section with a duality result. For $\gamma \in L^\infty(\statespace)$, we recall that the convex conjugate of $F(t,\cdot)$ is defined by
\begin{equation*}
F^*(t,\gamma)= \sup_{m \in \densities} \int_{\statespace} \gamma(x) m(x) \; \mathrm{d} x - F(t,m).
\end{equation*}
It directly follows from the above definition that $|F^*(t,\gamma)| \leq \| \gamma \|_{L^\infty(\statespace)} + C$, where $C$ is the constant obtained in \eqref{eq:F_bounded} and thus for $\gamma \in L^\infty(Q)$, the integral $\int_0^T F^*(t,\gamma(\cdot,t)) \; \mathrm{d} t$ is well-defined.

Consider the dual criterion $D \colon (u,P,\gamma) \in W^{2,1,p}(Q) \times L^\infty(0,T;\R^k) \times L^\infty(Q) \mapsto D(u,p,\gamma) \in \R \cup \{ - \infty \}$,
defined by
\begin{equation*}
D(u,P,\gamma)= \int_{\statespace} u(x,0) m_0(x) \; \mathrm{d} x - \int_0^T
\Phi^*(t,P(t)) \; \mathrm{d} t
- \int_0^T F^*(t,\gamma(t)) \; \mathrm{d} t.
\end{equation*}
The function $\Phi^*$ is the convex conjugate of $\Phi$ with respect to its second argument. Since $\Phi(t,0)=0$, we have that $\Phi^*(t,\cdot) \geq 0$ and thus the first integral is well-defined in $\R \cup \{ \infty \}$.

\begin{lemma}
Let $(\bar{u},\bar{m},\bar{v},\bar{P})$ be the solution to \eqref{MFGC}. Let $\tilde{f}$ be defined by $\tilde{f}(x,t)= f(x,t,\bar{m}(t))$. Then, $(\bar{u},\bar{P},\tilde{f})$ is a solution to the following problem:
\begin{equation}
  \label{equ-dual}
\max_{\begin{subarray}{c} u \in W^{2,1,p}(Q) \\ P \in
    L^\infty(0,T;\R^k) \\ \gamma \in L^\infty(Q) \end{subarray}} \! \!
D(u,P,\gamma), \
\text{s.t. }
\begin{cases}
\begin{array}{rl}
-\partial_t u - \sigma \Delta u + H(\nabla u + \phi^\intercal P) \leq
  &
\gamma \\
{u}(x,T) \leq & g(x).
\end{array}
\end{cases}
\end{equation}
Moreover, for all solutions $(u,P,\gamma)$ to the dual problem, $P=\bar{P}$.
If in addition, $\gamma= \tilde{f}$ and the above inequalities hold
as equalities, then $u= \bar{u}$.
\end{lemma}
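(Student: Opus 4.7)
The plan is to establish the result via a weak duality argument between the primal problem \eqref{eq:primal_problem_2} and the dual problem \eqref{equ-dual}, and then show that equality is attained at $(\bar{u},\bar{P},\tilde{f})$ paired with $(\bar{m},\bar{v})$, so that no slack is available in any of the intermediate Fenchel-Young inequalities.

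\textbf{Step 1: Weak duality.} Fix a feasible pair $(\hat{m},\hat{v})$ for \eqref{eq:primal_problem_2} and a feasible triple $(u,P,\gamma)$ for \eqref{equ-dual}. Multiply the HJB inequality by $\hat{m} \geq 0$ (using Lemma \ref{lemma:max_principle}), integrate over $Q$, and perform integration by parts using the Fokker-Planck equation satisfied by $\hat{m}$, noting that $\hat{m}(\cdot,0)=m_0$. The boundary terms produce $\int_{\statespace} u(\cdot,0) m_0\,\mathrm{d}x - \int_{\statespace}u(\cdot,T)\hat{m}(\cdot,T)\,\mathrm{d}x$, and the latter is bounded from below using $u(\cdot,T)\leq g$ and $\hat{m}(\cdot,T)\geq 0$. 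Then apply the Fenchel-Young inequality in three places: on the Hamiltonian, using \eqref{eq:strong_duality} as an inequality $-H(\nabla u + \phi^\intercal P) - \langle \nabla u + \phi^\intercal P, \hat{v}\rangle \leq L(\hat{v})$; on $\Phi$ to bound $\langle P(t),\smint \phi \hat{v}\hat{m}\rangle \leq \Phi(t,\smint \phi \hat{v}\hat{m}) + \Phi^*(t,P(t))$; and on $F$ to bound $\int_{\statespace}\gamma \hat{m}\,\mathrm{d}x \leq F(t,\hat{m}(t)) + F^*(t,\gamma(\cdot,t))$. Summing these yields $D(u,P,\gamma) \leq B(\hat{m},\hat{v})$.

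\textbf{Step 2: Strong duality at $(\bar{u},\bar{P},\tilde{f})$.} Track the four inequalities above when $(u,P,\gamma)=(\bar{u},\bar{P},\tilde{f})$ and $(\hat{m},\hat{v})=(\bar{m},\bar{v})$. By $(i)$\eqref{MFGC} the HJB holds with equality, hence integration of the HJB against $\bar{m}$ is an equality; by $(v)$\eqref{MFGC}, $\bar{u}(\cdot,T)=g$, so the terminal boundary term matches exactly. The Fenchel-Young inequality for $H$ becomes an equality thanks to $(iv)$\eqref{MFGC} and \eqref{eq:v_eq_minus_Hp}. The Fenchel-Young inequality for $\Phi$ becomes an equality because $(iii)$\eqref{MFGC} together with $\Psi=\Phi_z$ means $\bar{P}(t)\in\partial_z\Phi(t,\smint \phi \bar{v}\bar{m})$. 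Finally, the subgradient inequality \eqref{subdifF} means $\tilde{f}(\cdot,t)=f(\cdot,t,\bar{m}(t))\in\partial F(t,\bar{m}(t))$, so the Fenchel-Young inequality for $F$ is an equality too. Hence $D(\bar{u},\bar{P},\tilde{f})=B(\bar{m},\bar{v})$, which combined with Step 1 shows that $(\bar{u},\bar{P},\tilde{f})$ is a dual maximizer.

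\textbf{Step 3: Uniqueness of $P$.} Let $(u,P,\gamma)$ be any dual solution. Then $D(u,P,\gamma)=B(\bar{m},\bar{v})$, and rerunning Step 1 with $(\hat{m},\hat{v})=(\bar{m},\bar{v})$ forces equality in each of the Fenchel-Young inequalities used. In particular, for a.e.\@ $t$, $P(t)\in\partial_z\Phi(t,\smint \phi \bar{v}\bar{m})$; since $\Phi$ is differentiable in $z$ by assumption, this subdifferential is the singleton $\{\Psi(t,\smint \phi \bar{v}\bar{m})\}=\{\bar{P}(t)\}$, giving $P=\bar{P}$.

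\textbf{Step 4: Uniqueness of $u$ under the extra assumptions.} If in addition $\gamma=\tilde{f}$ and the constraint inequalities in \eqref{equ-dual} hold as equalities, then $u$ solves $-\partial_t u -\sigma\Delta u + H(\nabla u + \phi^\intercal \bar{P})=\tilde{f}$ with $u(\cdot,T)=g$. This is exactly the linear-in-$u$ backward parabolic equation satisfied by $\bar{u}$ (with coefficients depending only on the now-fixed data $\bar{P}$ and $\tilde{f}$), so standard uniqueness for the HJB equation in $W^{2,1,p}(Q)$ yields $u=\bar{u}$. The main obstacle in the whole argument is keeping track of the integration by parts and the three Fenchel-Young inequalities simultaneously in Step 1 so that Step 2 really does saturate each of them; once this bookkeeping is clean, Steps 3 and 4 are immediate.
\qed
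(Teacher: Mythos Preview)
Your proposal is correct and follows essentially the same route as the paper: the paper also tests an arbitrary feasible $(u,P,\gamma)$ against $(\bar m,\bar v)$, integrates by parts using the Fokker--Planck equation, and isolates exactly the same five non-positive ``slack'' terms (your HJB-inequality, terminal, $H$/$L$, $\Phi$/$\Phi^*$, and $F$/$F^*$ Fenchel--Young gaps), then checks they all vanish at $(\bar u,\bar P,\tilde f)$ and uses differentiability of $\Phi$ to force $P=\bar P$. One small correction to Step~4: the HJB equation $-\partial_t u -\sigma\Delta u + H(\nabla u + \phi^\intercal \bar P)=\tilde f$ is \emph{not} linear in $u$; the conclusion $u=\bar u$ is nevertheless correct, but you should invoke uniqueness for this semilinear backward parabolic (HJB) equation rather than linearity.
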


\begin{proof}
For all $t \in [0,T]$, we have
\begin{equation} \label{eq:dual_1}
- \int_0^T \Phi^*(P) \; \mathrm{d} t
= \int_0^T \Phi(\smint \phi \bar{v} \bar{m}) \; \mathrm{d} t - \iint_Q \langle \phi^{\intercal} P, \bar{v} \bar{m} \rangle \; \mathrm{d} x \; \mathrm{d} t + (a)
\end{equation}
with
\begin{equation*}
(a)= \int_0^T - \Phi^*(P) - \Phi(\smint \phi \bar{v} \bar{m}) + \langle P, \smint \phi \bar{v} \bar{m} \rangle \; \mathrm{d} t
\leq 0.
\end{equation*}
We also have that
\begin{equation} \label{eq:dual_15}
-\int_0^T F^*(t,\gamma(t)) \; \mathrm{d} t + \iint_Q \gamma(x,t) \bar{m}(x,t) \; \mathrm{d} x \; \mathrm{d} t = \int_0^T F(t,\bar{m}(t)) \; \mathrm{d} t + (b),
\end{equation}
where
\begin{equation*}
(b)= \iint_Q \gamma(x,t)\bar{m}(x,t) \; \mathrm{d} x \; \mathrm{d} t - \int_0^T F(t,\bar{m}(t)) \; \mathrm{d} t - \int_0^T F^*(t,\gamma(t)) \; \mathrm{d} t \leq 0.
\end{equation*}
Integrating by parts (in time), we obtain that
\begin{align*}
& \int_{\statespace} {u}(x,0) m_0(x) \; \mathrm{d} x
= \iint_Q - \partial_t u \bar{m} - u \partial_t \bar{m} \; \mathrm{d} x \; \mathrm{d} t
+ \int_{\statespace} u(x,T) \bar{m}(x,T) \; \mathrm{d} x \\
& \qquad  = \ \iint_Q (\sigma \Delta {u} + \gamma
      - H(\nabla {u} + \phi^\intercal {P})) \bar{m}
       + (-\sigma \Delta \bar{m} + \text{div}(\bar{v}\bar{m})) {u} \; \mathrm{d} x \; \mathrm{d} t \\
& \qquad \qquad + \int_{\statespace} g(x) \bar{m}(x,T) \; \mathrm{d} x 
+ (c) + (d),
\end{align*}
where
\begin{align*}
(c) = & \ \iint_Q (-\partial_t {u} - \sigma \Delta {u} +
          H(\nabla {u} + \phi^\intercal {P} ) - \gamma) \bar{m} \; \mathrm{d} x \; \mathrm{d} t \leq 0 \\
(d) = & \ \int_{\statespace} ({u}(x,T)-g(x)) \bar{m}(x,T) \; \mathrm{d} x \leq 0.
\end{align*}
Integrating by parts (in space), we further obtain that
\begin{align}
\int_{\statespace} {u}(x,0) m_0(x) \; \mathrm{d} x
= \ & \iint_Q \big( \gamma - H(\nabla {u} + \phi^\intercal {P})
- \langle \nabla {u}, \bar{v} \rangle \big) \bar{m} \notag \\
& \qquad + \int_{\statespace} g(x) \bar{m}(x,T) \; \mathrm{d} x
+ (c) + (d) \notag \\
= \ & \iint_Q \big( L(\bar{v}) + \gamma \big) \bar{m} + \langle \phi^\intercal \bar{P}, \bar{v} \rangle \bar{m} \; \mathrm{d} x \; \mathrm{d} t \notag \\
& \qquad + \int_{\statespace} g(x) \bar{m}(x,T) \; \mathrm{d} x + (c) + (d) + (e), \label{eq:dual_2}
\end{align}
where
\begin{equation*}
(e)= \iint_Q \big( - H(\nabla {u} + \phi^\intercal {P}) - L(\bar{v}) - \langle \nabla {u} + \phi^\intercal {P}, \bar{v} \rangle \big) \bar{m} \; \mathrm{d} x \; \mathrm{d} t \leq 0.
\end{equation*}
Combining \eqref{eq:dual_1}, \eqref{eq:dual_15} and \eqref{eq:dual_2} together, we finally obtain that
\begin{align*}
D(u,P,\gamma)
= \ & \int_0^T \Phi(\smint \phi \bar{v} \bar{m}) \; \mathrm{d} t + \iint_Q
      L(\bar{v}) \bar{m} \; \mathrm{d} x \; \mathrm{d} t + \int_0^T F(t,m(t)) \; \mathrm{d} t  \\
& \qquad +
\int_{\statespace} g(x) \bar{m}(x,T) \; \mathrm{d} x + (a) + (b) + (c) + (d) + (e) \\
= \ & B(\bar{m},\bar{v}) + (a) + (b) + (c) + (d) + (e).
\end{align*}
The five terms $(a)$, $(b)$, $(c)$, $(d)$, $(e)$ are non-positive and
equal to zero if $(u,P,\gamma)=(\bar{u},\bar{P},\tilde{f})$,
as can be easily verified.
This proves the optimality of $(\bar{u},\bar{P},\tilde{f})$.
Moreover, since $\Phi$ is differentiable (with gradient $\Psi$), the term
$(a)$ is null if and only if
$P(t)= \Psi(\smint \phi \bar{v} \bar{m})= \bar{P}(t)$, for a.e.\@ $t \in [0,T]$.
Therefore, for all optimal solutions $(u,P,\gamma)$, $P= \bar{P}$.
If moreover $\gamma= \tilde{f}$ and the inequality constraints in
\eqref{equ-dual}
hold as equalities,
then (since the HJB equation has a unique solution)
$u= \bar{u}$, which concludes the proof.
\qed \end{proof}

\begin{remark}
It is of interest to check when the density $m(x,t)$ is
a.e.\@ positive, since this is clearly a necessary condition for
the uniqueness of the solution of \eqref{eq:primal_problem_2}.
We note that a sufficient condition for the positivity of $m$
is given in \cite[Proposition 3.10]{POR}.
\end{remark}

\section*{Conclusion}

The existence and uniqueness of a classical solution to a mean field
game of controls have been demonstrated. A particularly important
aspect of the analysis is the fact that the equations $(iii)$ and
$(iv)$\eqref{MFGC}, encoding the coupling of the agents through the
controls, are equivalent to the optimality system of a `static' convex
problem. This observation 
enabled us to eliminate the
variables $v$ and $P$ from the coupled system.

The analysis done in this article can be extended in different ways. A
more complex interaction between the agents could be considered. For
example, it would be possible to replace equations $(iii)$ and $(iv)$ by the following ones:
\begin{equation*}
\begin{array}{rl}
P(t)= & \Psi(t, \int_{\statespace} \varphi(x,t,v(x,t)) m(x,t) \, \mathrm{d} x ) \\[0.5em]
v(x,t)= & -H_p(x,t,\nabla u(x,t) D_v\varphi(x,t,v(x,t)^\intercal P(t)),
\end{array}
\end{equation*}
assuming that $\varphi$ is convex with respect to $v$ and $\Psi \geq 0$.
For a fixed $t \in [0,T]$, this system is equivalent to the optimality system associated with the following convex problem:
\begin{align*}
\inf_{v \colon \statespace \rightarrow \R^d}
& \Phi \Big(t,\int_{\statespace} \varphi(x,t,v(x) ) m(x,t) \, \mathrm{d} x \Big) \\
& \qquad + \int_{\statespace} \big( L(v(x)) + \langle \nabla u(x,t), v(x) \rangle) m(x,t) \; \mathrm{d} x.
\end{align*}
Another possibility of extension of our analysis would be to add convex constraints on the control variable.

Future research will aim at exploiting the potential structure of the problem, which can be used to solve it numerically and to prove the convergence of learning procedures, as was done in \cite{CDHD2015}.

\begin{acknowledgements}
The authors want to thank an anonymous referee
for his useful remarks.
\end{acknowledgements}


\bibliographystyle{abbrv}

    \appendix

\section{A priori bounds for parabolic equations}

In this appendix we provide estimates for the following parabolic equation:
\begin{equation}
  \label{eq:parabolic}
\begin{array}{rll}
\partial_t u - \sigma \Delta u + \langle b, \nabla u \rangle+ c u = & h, \quad & (x,t) \in Q, \\
u(x,0)= & u_0(x), & x \in \statespace,
\end{array}
\end{equation}
for different assumptions on $b$, $c$, $h$, and $u_0$.
The technique is based on the following idea.
By standard parabolic estimates 
detailed below, 
\eqref{eq:parabolic} has a unique solution $u$ in
$L^2(0,T; H^1(\statespace))$,
that we may identify with a periodic function
over $\R^d$.
Let $\varphi :\R^d\rightarrow \R$ be of class $C^\infty$,
with value 1 in a neighbourhood of the closure of
$\statespace$, and with compact support in
$\Omega := B(0,2)$.
Set $Q':=\Omega\times (0,T)$. 
Then $v := u\varphi$ is solution of
\begin{equation}
  \label{eq:parabolic-period}
\begin{array}{rll}
  \partial_t v - \sigma \Delta v + \langle b, \nabla v \rangle + c v
  = & h[u], \quad & (x,t) \in Q', \\
v(x,0)= & v_0(x), & x \in \Omega,
\end{array}
\end{equation}
with $v_0:= u_0\varphi$ and
\be
h[u] := h \varphi - 2 \sigma \langle \nabla \varphi, \nabla u \rangle - \sigma u \Delta \varphi + \langle b, \nabla \phi \rangle u.
\ee
Observe that the solution $v$ of \eqref{eq:parabolic-period}
is equal to 0 in a vicinity of
$(\partial\Omega) \times (0,T)$,
and hence, satisfies the homogeneous Neumann condition;
this allows us to apply some results of \cite{LSU}.

\begin{lemma}
  \label{lem:w21q}
  Let $y \in W^{2,1,q}(Q')$, with $q \in (1,\infty)$. Then
  $y \in L^{q'}(Q')$ and $\nabla y\in L^{q''}(Q')$,
  where
\be
\begin{cases}
\begin{array}{cl}
\frac{1}{q''} = \frac{1}{q} -\frac{1}{d+2}, & \text{if $q < 2+d$,} \\
q'' = \infty, & \text{otherwise},
\end{array}
\end{cases}
\quad
\begin{cases}
\begin{array}{cl}
\frac{1}{q'} = \frac{1}{q} -\frac{2}{d+2}, & \text{if $q< 1 + \frac{d}{2}$,} \\
q'= \infty, & \text{otherwise},
\end{array}
\end{cases}
\ee
with continuous inclusion:
\be
  \label{lem:w21q-2}
  \|y\|_{L^{q'}(Q')} + \| \nabla y \|_{L^{q''}(Q')}
  \leq c(q) \|y\|_{ W^{2,1,q}(Q')}. 
\ee
\end{lemma}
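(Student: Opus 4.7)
The statement is an instance of the classical anisotropic parabolic Sobolev embedding. The underlying heuristic is that, under the parabolic scaling $(x,t) \mapsto (\lambda x, \lambda^2 t)$, the cylinder $Q'$ has effective dimension $d+2$; thus a gain of two ``parabolic derivatives'' (the content of $W^{2,1,q}$) produces the Sobolev exponent $q' = q(d+2)/(d+2-2q)$ when $q < 1+d/2$, while a gain of only one parabolic derivative (what remains for $\nabla y$) produces the exponent $q'' = q(d+2)/(d+2-q)$ when $q < d+2$. In the complementary ranges, the embedding is into $L^\infty$.

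The plan is to deduce \eqref{lem:w21q-2} directly from the anisotropic embeddings of \cite[Chapter II, Lemma 3.3]{LSU}. Since $\Omega = B(0,2)$ has smooth boundary and we impose no boundary conditions on $y$, no extension procedure is needed. For the bound on $y$ itself, that lemma applies verbatim, giving $\|y\|_{L^{q'}(Q')} \leq c(q)\|y\|_{W^{2,1,q}(Q')}$ in the subcritical regime $q < 1+d/2$, and the $L^\infty$ bound otherwise. For the bound on $\nabla y$, I would apply the analogous one-derivative version of the same lemma to each spatial derivative $\partial_i y$: this function lies in the anisotropic Besov-type space of parabolic regularity one (half of two), which can be read off by interpolation between the two controls $\|\nabla^2 y\|_{L^q}$ and $\|\partial_t y\|_{L^q}$ that are available from $y\in W^{2,1,q}(Q')$. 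Alternatively, one can establish the gradient bound by first treating the base case $q=2$ via the standard inclusion $W^{2,1,2}(Q') \hookrightarrow C([0,T];H^1(\Omega)) \cap L^2(0,T;H^2(\Omega))$ together with Gagliardo--Nirenberg interpolation in space, and then extend to all $q \in (1,\infty)$ by complex interpolation.

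The main obstacle I anticipate is the careful bookkeeping of exponents at the three transition regimes (subcritical, critical $q = 1+d/2$ or $q=d+2$, and supercritical): at criticality the Lebesgue-Sobolev embedding fails logarithmically, so a small additional integrability has to be extracted, either by Morrey-type estimates in the parabolic metric or by bootstrapping from any strictly smaller exponent. Since all these technical details are already worked out in \cite[Chapter II]{LSU}, the proof essentially amounts to a dictionary translation: verify that our $(q',q'')$ coincide with the exponents arising there, and invoke the corresponding inequalities on $Q'$.
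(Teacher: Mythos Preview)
Your proposal is correct and matches the paper's approach: the paper's proof consists solely of the citation ``See \cite[Lemma 3.3, page 80]{LSU}'', and you likewise reduce the statement to that reference after supplying the parabolic-scaling heuristic. Your additional discussion of the critical exponents and the alternative interpolation route for $\nabla y$ is extra commentary, not a different method.
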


\begin{proof}
See \cite[Lemma 3.3, page 80]{LSU}.
\qed \end{proof}
  
\begin{theorem}
  \label{theo:max_reg-lad}
  Let $q\in (1,\infty)$,
  $w_0\in W^{2-2/q,q}(\Omega)$,
and $h \in L^q(Q')$.
Then the heat equation
  \begin{equation}
  \label{eq:heat}
\begin{array}{rll}
  \partial_t w - \sigma \Delta w 
  = & h, \quad & (x,t) \in Q', \\
w(x,0)= & w_0(x), \quad & x \in \Omega,
\end{array}
\end{equation}
with homogeneous Neumann boundary condition on
$\partial\Omega\times (0,T)$,
  has a unique solution in
  $W^{2,1,q}(Q')$ that satisfies 
\begin{equation*}
  \| w\|_{W^{2,1,q}(Q')} \leq C \big( \| w_0 \|_{W^{2-2/q,q}(\Omega)} + \| h \|_{L^q(Q')} \big).
\end{equation*}
\end{theorem}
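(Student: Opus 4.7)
The plan is to establish this classical maximal $L^q$-regularity result for the heat equation with homogeneous Neumann boundary condition on the smooth bounded domain $\Omega = B(0,2)$, by reducing to the case of zero initial datum and invoking the corresponding estimate from \cite{LSU}.

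First, I would reduce to the case of zero initial condition. By the parabolic trace theorem of \cite[Chapter II]{LSU}, the space $W^{2-2/q,q}(\Omega)$ is (up to a compatibility condition that is automatic for $q$ small enough and can be arranged here by cutting off $w_0$ near $\partial\Omega$ if needed) the trace space of $W^{2,1,q}(Q')$ at $t=0$, so there exists an extension $W \in W^{2,1,q}(Q')$ with $W(\cdot,0)= w_0$, vanishing in a neighbourhood of $\partial\Omega$, and
\begin{equation*}
\| W \|_{W^{2,1,q}(Q')} \leq C \| w_0 \|_{W^{2-2/q,q}(\Omega)}.
\end{equation*}
Setting $w = W + \tilde{w}$, the new unknown $\tilde{w}$ must solve the heat equation with zero initial condition, homogeneous Neumann boundary condition, and right-hand side $\tilde{h} := h - \partial_t W + \sigma \Delta W \in L^q(Q')$, whose norm is controlled by $\| h \|_{L^q(Q')} + \| w_0 \|_{W^{2-2/q,q}(\Omega)}$.

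Second, for the reduced problem with zero initial datum and forcing $\tilde{h} \in L^q(Q')$, I would invoke the maximal $L^q$-regularity of the Neumann Laplacian on $\Omega$. Concretely, the operator $-\sigma\Delta$ with domain $\{ z \in W^{2,q}(\Omega) : \partial_\nu z = 0 \text{ on } \partial\Omega \}$ generates an analytic semigroup on $L^q(\Omega)$ for $q \in (1,\infty)$; on a smooth bounded domain it enjoys the maximal $L^q$-regularity property, which produces a unique $\tilde{w} \in W^{2,1,q}(Q')$ satisfying
\begin{equation*}
\| \tilde{w} \|_{W^{2,1,q}(Q')} \leq C \| \tilde{h} \|_{L^q(Q')}.
\end{equation*}
This is essentially \cite[Chapter IV, Theorem 9.1]{LSU}. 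Adding the two estimates yields the announced bound. Uniqueness follows immediately by linearity: applying the estimate to the difference of two solutions with identical data gives a vanishing norm.

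The main technical obstacle, which is hidden behind the reference to \cite{LSU}, is the maximal $L^q$-regularity inequality itself: its proof relies on singular integral estimates for the heat kernel together with a localisation/flattening of $\partial\Omega$ to handle the Neumann condition. At the level of the present appendix, the only remaining care is to verify that the mixed-norm space $W^{2,1,q}(Q') = L^q(0,T;W^{2,q}(\Omega)) \cap W^{1,q}(Q')$ coincides with the parabolic Sobolev space of \cite{LSU}, and to check that the extension of $w_0$ constructed above is compatible with the Neumann boundary condition, both of which are routine.
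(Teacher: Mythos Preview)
Your proposal is correct and, in essence, coincides with the paper's approach: the paper's proof consists of a single line citing \cite[Theorem IV.9.1, page 341]{LSU}, which is exactly the reference you invoke for the core maximal $L^q$-regularity estimate. Your write-up simply unfolds the reduction to zero initial data and the uniqueness argument that are implicit in that citation, so there is no substantive difference between the two.
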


\begin{proof}
See \cite[Theorem IV.9.1, page 341]{LSU}.
\qed \end{proof}

\begin{theorem} 
\label{theo:max_reg1}
Let $p > d+2$. For all $R>0$, there exists $C>0$ such that for all $u_0 \in W^{2-2/p,p}(\statespace)$, for all $b \in L^p(Q,\R^d)$, for all $c \in L^p(Q)$, for all $h \in L^p(Q)$, satisfying
\begin{equation*}
\| u_0 \|_{W^{2-2/p,p}(\statespace)} \leq R, \quad
\| b \|_{L^p(Q,\R^d)} \leq R, \quad
\| c \|_{L^p(Q)} \leq R, \quad
\| h \|_{L^p(Q)} \leq R,
\end{equation*}
equation \eqref{eq:parabolic} has a unique solution $u$ in $W^{2,1,p}(Q)$ satisfying moreover $\| u \|_{W^{2,1,p}(Q)} \leq C$.
\end{theorem}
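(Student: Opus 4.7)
\textbf{Proof proposal for Theorem \ref{theo:max_reg1}.}
The plan is to reduce the equation on $Q$ to a heat equation with Neumann boundary conditions on $Q' = \Omega \times (0,T)$ via the cutoff construction described before Lemma \ref{lem:w21q}, apply the maximal $L^p$-regularity result of Theorem \ref{theo:max_reg-lad}, and treat the drift $\langle b, \nabla u \rangle$ and potential $cu$ as right-hand side perturbations. The key enabling fact is that for $p > d+2$, Lemma \ref{lem:w21q} gives $q' = q'' = \infty$, so any $v \in W^{2,1,p}(Q')$ automatically satisfies $\|v\|_{L^\infty(Q')} + \|\nabla v\|_{L^\infty(Q')} \leq C_1 \|v\|_{W^{2,1,p}(Q')}$.

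Concretely, taking $\varphi \in C_c^\infty(\Omega)$ with $\varphi \equiv 1$ near $\overline{\statespace}$ and setting $v = u\varphi$, the function $v$ satisfies \eqref{eq:parabolic-period} with right-hand side $h[v] - \langle b, \nabla v \rangle - cv$ (after rewriting $h[u]$ in terms of $v$ on $\supp \varphi \cap \statespace$). Theorem \ref{theo:max_reg-lad} then yields
\[
\|v\|_{W^{2,1,p}(Q')} \leq C_0 \Big( \|u_0\|_{W^{2-2/p,p}(\statespace)} + \|h\|_{L^p(Q)} + \|b \cdot \nabla v\|_{L^p(Q')} + \|cv\|_{L^p(Q')} + \text{lower order}(v) \Big),
\]
and Hölder's inequality combined with the embedding above gives
\[
\|b \cdot \nabla v\|_{L^p(Q')} + \|cv\|_{L^p(Q')} \leq C_1 \big( \|b\|_{L^p(Q,\R^d)} + \|c\|_{L^p(Q)} \big) \|v\|_{W^{2,1,p}(Q')}.
\]

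Because the product $C_0 C_1 (\|b\|_{L^p} + \|c\|_{L^p})$ need not be less than $1$, direct absorption is not available. To circumvent this, I would partition $[0,T]$ into $N$ sub-intervals $0 = t_0 < \dots < t_N = T$ chosen so that on each cylinder $Q_i' = \Omega \times (t_i,t_{i+1})$,
\[
\|b\|_{L^p(Q_i',\R^d)}^p + \|c\|_{L^p(Q_i')}^p \leq (4 C_0 C_1)^{-p}.
\]
By absolute continuity of the integral applied to $t \mapsto \|b(\cdot,t)\|_{L^p}^p + \|c(\cdot,t)\|_{L^p}^p$, whose total integral is at most $2R^p$, such a partition exists with $N \leq N(R)$ depending only on $R$. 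Applying the localized heat estimate on each $Q_i'$ with initial datum $v(\cdot,t_i)$ in $W^{2-2/p,p}(\Omega)$ (which lies in that space by the trace embedding $W^{2,1,p} \hookrightarrow C([0,T]; W^{2-2/p,p})$), the perturbation terms can be absorbed into the left-hand side, producing
\[
\|v\|_{W^{2,1,p}(Q_i')} \leq 2 C_0 \big( \|v(\cdot,t_i)\|_{W^{2-2/p,p}(\Omega)} + \|h\|_{L^p(Q_i')} + R \big).
\]
Iterating through the $N(R)$ sub-intervals yields the desired global bound $\|u\|_{W^{2,1,p}(Q)} \leq C(R)$. Existence follows by a Picard iteration on each short sub-interval (the contraction estimate being exactly the absorbed inequality above) and gluing; uniqueness follows by applying the same a priori bound to the difference of two solutions with zero data.

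\textbf{Main obstacle.} The delicate step is Step 3: the smallness of $\|b\|_{L^p} + \|c\|_{L^p}$ needed to absorb the perturbation is not pointwise, so the partition must be tuned to the concentration of $b$ and $c$ in time. What saves the argument is the quantitative bound $N \leq N(R)$ coming from absolute continuity, which guarantees that the final constant $C$ depends only on $R$ (through the geometric growth $(2C_0)^{N(R)}$ across sub-intervals) and not on the particular coefficients $b$, $c$.
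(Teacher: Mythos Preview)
Your approach is correct in spirit but genuinely different from the paper's, and there is one technical point you should tighten.

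\textbf{Comparison.} The paper does not absorb the lower-order terms via time-splitting. Instead, it first produces a weak solution $u\in W(0,T)$ by the standard $H^1$--$L^2$ variational theory, then runs a \emph{bootstrapping} argument: writing the equation as a pure heat equation with right-hand side $h''[u]=h\varphi-2\sigma\langle\nabla\varphi,\nabla u\rangle-\sigma u\Delta\varphi+u\langle b,\nabla\varphi\rangle-\varphi(\langle b,\nabla u\rangle+cu)$, one uses Lemma~\ref{lem:w21q} to show that $v\in W^{2,1,q_k}(Q')$ implies $h''[u]\in L^{q_{k+1}}(Q')$ with $1/q_{k+1}\leq 1/q_k+1/p-1/(d+2)$. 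Since $p>d+2$, the exponents increase by a fixed positive amount at each step, so after finitely many iterations $q_K\geq d+2$, which yields $u,\nabla u\in L^\infty$ and then $v\in W^{2,1,p}$. No smallness is ever needed; the gain comes purely from the Sobolev gap $1/p-1/(d+2)<0$. Your time-splitting route is the other standard technique (it is, for instance, the method in Krylov's monographs), and it has the advantage of giving existence and the estimate in one contraction argument, without having to first appeal to a separate weak-solution theory.

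\textbf{A point to fix.} As written, your absorption uses $\|\nabla v\|_{L^\infty(Q_i')}\leq C_1\|v\|_{W^{2,1,p}(Q_i')}$ with $C_1$ independent of the length of the sub-interval. This is false for general $v$: for a constant function the ratio blows up like $\tau^{-1/p}$. The usual cure is to split $v=z+w$ on each $Q_i'$, where $z$ solves the heat equation with initial datum $v(\cdot,t_i)$ and zero right-hand side, and $w$ has zero initial datum. Then $z$ is controlled in $C^1$ by $\|v(\cdot,t_i)\|_{W^{2-2/p,p}}$ (note $2-2/p>1+d/p$ when $p>d+2$), while $w$ extends by zero to $t<t_i$, so the embedding for $w$ is uniform. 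Run the absorption on $w$ alone and carry $\|v(\cdot,t_i)\|_{W^{2-2/p,p}}$ as data; the iteration through the $N(R)$ sub-intervals then goes through exactly as you describe. The cutoff terms in $h[u]$ supported on $\{\nabla\varphi\neq0\}$ are lower order (coefficients $\nabla\varphi$, $\Delta\varphi$ are bounded) and can be absorbed by the same mechanism together with an interpolation inequality for $\|\nabla v\|_{L^p}$.
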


\begin{proof}
  We first check that there is a solution in
  the standard variational setting with 
  spaces $H:=L^2(\statespace)$,
  $V := H^1(\statespace)$. 
  Let us show that, if $y\in V$, then
  $\langle b, \nabla y \rangle$ and $cy$ belong to $V^*$.
  By the Sobolev inclusion,
  $V\subset L^{q_1}(\statespace)$,
  $1/q_1=1/2-1/d$,
  with dense inclusion, so that
  $V^*\subset L^{q_1}(\statespace)^* = L^{q_2}(\statespace)$,
  with $1/q_2=1-1/q_1 = 1/2+1/d$.
  Now $ \langle b, \nabla y \rangle \in L^r(\statespace)$ with
\begin{equation*}
  \frac{1}{r} = \frac{1}{2} + \frac{1}{p} < \frac{1}{2} + \frac{1}{d+2} < \frac{1}{q_2},
  \end{equation*}
  so that
  $\langle b, \nabla y \rangle$ belongs to $V^*$.
  Similarly,
  $c y \in L^r(\statespace)$ with
\begin{equation*}
\frac{1}{r} = \frac{1}{q_1} + \frac{1}{p} < \frac{1}{2} - \frac{1}{d} +\frac{1}{1+d/2}< \frac{1}{q_2},
\end{equation*}
  so that
  $cy$ belongs to $V^*$.  
  So, \eqref{eq:parabolic} has a unique solution in 
  the space
  \be
W(0,T) := \{v \in L^2(0,T;V); \; \partial_t v \in L^2(0,T;V^*) \}.
\ee
Then we easily check that
$h[u] \in L^{q_0}(Q')$,
for some $q_0\in (1,2)$.
Then, by Theorem \ref{theo:max_reg-lad},
$v \in W^{2,1,q_0}(Q)$.  
We next compute by induction a finite sequence $(q_k)_{k=0,1,...,K}$ such that
\begin{equation*}
(i) \ v \in W^{2,1,q_k}(Q'), \ \forall k=0,...,K, \quad
(ii) \ q_k \in (1,d+2), \ \forall k=0,...,K-1, \quad (iii) \ q_K \geq d+2.
\end{equation*}
The first element $q_0$ has already been fixed and satifies $v \in W^{2,1,q_0}(Q')$. If $q_0 \geq d+2$, we can stop and set $K=0$.
Let $k \in \mathbb{N}$, assume that $q_k \in (1,d+2)$ and that $v \in W^{2,1,q_k}(Q')$.
Then $v$
is solution of
\begin{equation}
\label{eq:parabolic-period2}
\begin{array}{rll}
\partial_t v - \sigma \Delta v = & h''[u], \quad & (x,t) \in Q', \\
u(x,0)= & v_0(x), & x \in \Omega,
\end{array}
\end{equation}
where
\be
h''[u] := h \varphi - 2 \sigma \langle \nabla \varphi, \nabla u \rangle - \sigma u \Delta \varphi
+ u \langle b, \nabla \varphi \rangle
- \varphi ( \langle b, \nabla u \rangle + cu).
\ee
We construct now $q_{k+1}$ in such a way that $h''[u] \in L^{q_{k+1}}(Q')$.
Since $v \in W^{2,1,q_k}(Q')$, we have that $u \in W^{2,1,q_k}(Q)$ and thus
by Lemma \ref{lem:w21q}, $\langle b, \nabla u\rangle\in L^{r'}(Q')$ with
\begin{equation} \label{r_prime}
\frac{1}{r'} = \frac{1}{q_k} + \frac{1}{p} - \frac{1}{d+2}.
\end{equation}
If $q_k < 1 + d/2$, then $c u \in L^{r''}(Q')$ with
\begin{equation}
\frac{1}{r''} = \frac{1}{q_k} + \frac{1}{p} - \frac{2}{d+2}.
\end{equation}
Note that $r'' > r'$.
If $q_k \geq 1 + d/2$, then $u \in L^\infty(Q')$ and thus $cu \in L^p(Q')$.
We set now $q_{k+1}= \min(r',p)$. We observe that in both cases, $cu \in L^{q_{k+1}}(Q')$.
One can verify that the other terms of $h''[u]$ also lie in $L^{q_{k+1}}(Q')$.
Therefore, by Theorem \ref{theo:max_reg-lad}, $v\in W^{2,1,q_{k+1}}(Q')$.
If $q_{k+1} \geq d+2$, we stop the construction of the sequence and set $K=k+1$.
It remains to prove that the construction of the sequence stops after finitely many iterations. If that was not the case, we would have that $q_{k+1}= r'$, with $r'$ defined in \eqref{r_prime}, for all $k \in \mathbb{N}$, implying that
\begin{equation*}
\frac{1}{q_k} = \frac{1}{q_0} + k \Big( \frac{1}{p} - \frac{1}{d+2} \Big) \underset{k \to \infty}{\longrightarrow} -\infty,
\end{equation*}
which is a contradiction.
Now we know that $v \in W^{2,1,q_K}(Q')$, with $q_K \geq d+2$. This implies that $u \in L^\infty(Q')$ and $\nabla u \in L^\infty(Q',\R^d)$ (by Lemma \ref{lem:w21q}) and thus that $h''[u] \in L^p(Q')$. Finally, $v \in W^{2,1,p}(Q')$ (by Theorem \ref{theo:max_reg-lad}) and $u \in W^{2,1,p}(Q)$, since $u$ and $v$ coincide on $Q$.

Observing that $q_0$,...,$q_K$ only depend on $p$ and $d$, the reader can check that $v$ (and thus $u$) can be bounded in $W^{2,1,p}(Q')$ by a constant depending on $R$ only.
\qed \end{proof}

\begin{theorem}
\label{LAD-l3.4p82}
For $q \in (1,\infty)$, the trace at time $t=0$ of elements of $W^{2,1,q}(Q')$
belongs to $W^{2-2/q,q}(\Omega)$. 
\end{theorem}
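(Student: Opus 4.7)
The statement is a standard trace theorem for the anisotropic parabolic Sobolev space $W^{2,1,q}(Q')$, so my approach would be to recast it as an application of the Lions--Peetre trace theorem in the framework of real interpolation. Recall that if $X_0 \hookrightarrow X_1$ are Banach spaces and $q \in (1,\infty)$, then any $u$ with $u \in L^q(0,T;X_0)$ and $\partial_t u \in L^q(0,T;X_1)$ admits a well-defined trace at $t=0$ valued in the real interpolation space $(X_0,X_1)_{1/q,q}$, with norm estimate
\[
\|u(\cdot,0)\|_{(X_0,X_1)_{1/q,q}} \le C\bigl(\|u\|_{L^q(0,T;X_0)} + \|\partial_t u\|_{L^q(0,T;X_1)}\bigr).
\]
I would specialize to $X_0 = W^{2,q}(\Omega)$ and $X_1 = L^q(\Omega)$; by the very definition of $W^{2,1,q}(Q')$ one has the continuous inclusion $W^{2,1,q}(Q') \hookrightarrow L^q(0,T;X_0) \cap W^{1,q}(0,T;X_1)$, so the Lions--Peetre result immediately provides a continuous trace map into $(W^{2,q}(\Omega), L^q(\Omega))_{1/q,q}$.

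It then remains to identify this interpolation space with $W^{2-2/q,q}(\Omega)$. The general formula $(W^{m,q}(\Omega), L^q(\Omega))_{\theta,q} = B^{m(1-\theta)}_{q,q}(\Omega)$, valid for sufficiently regular $\Omega$ and $m(1-\theta) \notin \mathbb{N}$, gives, with $m=2$ and $\theta=1/q$, the Besov space $B^{2-2/q}_{q,q}(\Omega)$; combined with the coincidence $B^{s}_{q,q}(\Omega) = W^{s,q}(\Omega)$ for noninteger $s$, this matches the statement. (Since $q \in (1,\infty)$, the exponent $2-2/q$ lies in $(0,2)$, and one only needs to exclude the isolated value $q=2$, where $2-2/q = 1$ is an integer; this endpoint is handled by either a direct definition via the $K$-functional or by a limiting argument.)

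The main technical obstacle I would anticipate is not the abstract trace theorem itself, but the identification of the interpolation space on the bounded domain $\Omega$. To handle this cleanly I would employ a total extension operator $E \colon W^{s,q}(\Omega) \to W^{s,q}(\mathbb{R}^d)$ that is continuous for $s \in \{0,2\}$ (Stein-type extension suffices for the ball $\Omega = B(0,2)$), reducing the interpolation computation to $\mathbb{R}^d$, where the Fourier-analytic characterization of $B^{s}_{q,q}$ makes the identification routine; one then recovers the result on $\Omega$ by restriction. Alternatively, since the paper cites \cite[Lemma 3.4, p.~82]{LSU}, one may simply invoke that reference, whose proof proceeds by a direct mollification-and-freezing argument on small parabolic cylinders, bypassing the interpolation machinery altogether.
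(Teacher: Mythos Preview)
Your proposal is correct and in fact goes well beyond what the paper does: the paper's entire proof is the one-line citation ``See \cite[Lemma 3.4, page 82]{LSU}.'' You supply a genuine argument via the Lions--Peetre trace theorem and the identification $(W^{2,q}(\Omega),L^q(\Omega))_{1/q,q}=W^{2-2/q,q}(\Omega)$, which is the standard modern route and is sound (the endpoint $q=2$ you flag is indeed harmless, since $W^{1,2}=H^1$ coincides with the relevant interpolation space in the Hilbert setting). You also explicitly note the citation alternative, so your proposal subsumes the paper's approach. The only thing to be aware of is that the paper treats this as a black-box reference rather than something to be proved, so for the purposes of matching the paper your final sentence alone would suffice.
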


\begin{proof}
See \cite[Lemma 3.4, page 82]{LSU}.
\qed \end{proof}

\begin{theorem} \label{theo:max_reg2}
Let $p > d+2$. There exists $C>0$ such that for all $u_0 \in W^{2-2/p,p}(\statespace)$ and for all $h \in L^p(Q)$, the unique solution $u$ to \eqref{eq:parabolic} (with $b= 0$ and $c=0$) satisfies the following estimate:
\begin{equation*}
\| u \|_{W^{2,1,p}(Q)} \leq C \big( \| u_0 \|_{W^{2-2/p,p}(\statespace)} + \| h \|_{L^p(Q)} \big).
\end{equation*}
\end{theorem}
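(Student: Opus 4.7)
My plan is to deduce this linear-in-data estimate directly from Theorem \ref{theo:max_reg1} by exploiting the linearity of the heat equation; no new PDE work is required once the existence and the nonlinear bound have been established in the previous theorem.

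First, I would invoke Theorem \ref{theo:max_reg1} with $b\equiv 0$, $c\equiv 0$ (so that the two hypotheses $\|b\|_{L^p}\le R$ and $\|c\|_{L^p}\le R$ are trivially satisfied) and with $R=1$. This provides a \emph{single} constant $C_0>0$ such that, for every $u_0$ and $h$ satisfying $\|u_0\|_{W^{2-2/p,p}(\statespace)}\le 1$ and $\|h\|_{L^p(Q)}\le 1$, the equation \eqref{eq:parabolic} with $b=c=0$ has a unique solution $u\in W^{2,1,p}(Q)$ with $\|u\|_{W^{2,1,p}(Q)}\le C_0$. In particular, this handles both existence/uniqueness and a uniform bound on the unit ball of data.

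Next, for arbitrary nonzero data, I would normalise. Set
\[
M=\|u_0\|_{W^{2-2/p,p}(\statespace)}+\|h\|_{L^p(Q)},
\]
and put $\tilde u_0=u_0/M$, $\tilde h=h/M$; if $M=0$ the statement is trivial by uniqueness. By construction $\|\tilde u_0\|_{W^{2-2/p,p}(\statespace)}\le 1$ and $\|\tilde h\|_{L^p(Q)}\le 1$, so the previous step yields a unique solution $\tilde u$ of the normalised problem with $\|\tilde u\|_{W^{2,1,p}(Q)}\le C_0$. The heat operator $\partial_t-\sigma\Delta$ and the trace at $t=0$ are linear, and Theorem \ref{theo:max_reg1} guarantees uniqueness in $W^{2,1,p}(Q)$; therefore the solution associated with $(u_0,h)$ is exactly $u=M\tilde u$, and
\[
\|u\|_{W^{2,1,p}(Q)}=M\|\tilde u\|_{W^{2,1,p}(Q)}\le C_0\,M=C_0\bigl(\|u_0\|_{W^{2-2/p,p}(\statespace)}+\|h\|_{L^p(Q)}\bigr).
\]
This gives the claim with $C=C_0$.

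There is no real obstacle here: the only point that needs to be checked is that the passage from the $R$-dependent bound of Theorem \ref{theo:max_reg1} to a genuinely linear estimate is legitimate, and this is precisely where the vanishing of $b$ and $c$ matters, since it is exactly the absence of drift and zeroth-order terms that makes the data-to-solution map linear and the scaling $u\mapsto u/M$ compatible with the equation. Alternatively, one could re-run the bootstrap appearing in the proof of Theorem \ref{theo:max_reg1} on the periodic extension $v=u\varphi$ of $u$, invoking Theorem \ref{theo:max_reg-lad} at each stage and observing that every estimate is linear in the data when $b=c=0$; but the scaling argument above is strictly shorter and yields the same constant $C=C_0$.
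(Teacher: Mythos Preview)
Your proof is correct. The paper takes a slightly different but closely related route: it considers the linear map $u\mapsto(u(\cdot,0),\partial_t u-\sigma\Delta u)$ from $W^{2,1,p}(Q)$ to $W^{2-2/p,p}(\statespace)\times L^p(Q)$, observes that it is continuous (by the trace theorem, Theorem~\ref{LAD-l3.4p82}) and bijective (by Theorem~\ref{theo:max_reg1}), and then invokes the open mapping theorem to conclude that its inverse is continuous, which is exactly the desired estimate. Your scaling argument is the hands-on version of the same idea: rather than appealing to the open mapping theorem, you exploit the homogeneity of the linear data-to-solution map directly to pass from the unit-ball bound supplied by Theorem~\ref{theo:max_reg1} to a genuinely linear estimate. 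Your approach is slightly more elementary (no functional-analytic black box), while the paper's is more conceptual; both are equally short and both hinge on the same observation you correctly highlight, namely that $b=c=0$ is what makes the map linear.
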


\begin{proof}
Consider the map
$u \in W^{2,1,p}(Q) \mapsto (u(\cdot,0), \partial_t u - \sigma \Delta u - h)
\in W^{2-2/p,p}(\Omega),L^p(Q)).$
By Theorem \ref{LAD-l3.4p82}, it is continuous and by Theorem \ref{theo:max_reg2}, it is bijective.
As a consequence of the open mapping theorem,
its inverse is also continuous.
The result follows.
\qed \end{proof}

\begin{lemma}
  \label{lemma:max_reg_embedding}
Let $p>d+2$.
There exists $\delta \in (0,1)$ and $C>0$ such that for all $u \in
W^{2,1,p}(Q)$,
\begin{equation*}
\| u \|_{\mathcal{C}^\delta(Q)} + \| \nabla u \|_{\mathcal{C}^\delta(Q,\R^d)}
\leq C \| u \|_{W^{2,1,p}(Q)}.
\end{equation*}
\end{lemma}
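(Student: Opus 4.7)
The plan is to reduce the estimate on $Q = \T^d\times[0,T]$ to a standard anisotropic Sobolev embedding on a cylinder over a bounded Euclidean open set, via the same periodicity/cutoff device already introduced in the appendix. First, identify $u \in W^{2,1,p}(Q)$ with a $\Z^d$-periodic function on $\R^d\times[0,T]$, pick the smooth cutoff $\varphi \in C_c^\infty(\Omega)$ with $\Omega = B(0,2)$ and $\varphi \equiv 1$ in a neighborhood of $\bar{\T^d}$, and set $v := u\varphi$. Using $\|\varphi\|_{C^2(\Omega)} \leq C$ and the Leibniz rule, one checks that
\[
v \in W^{2,1,p}(Q'), \qquad \|v\|_{W^{2,1,p}(Q')} \leq C\,\|u\|_{W^{2,1,p}(Q)},
\]
where $Q' = \Omega\times(0,T)$, and moreover $v$ vanishes in a neighborhood of $\partial\Omega\times(0,T)$.

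Next, invoke the parabolic Sobolev embedding for $W^{2,1,p}(Q')$ when $p > d+2$: there exists $\delta\in(0,1)$ (e.g., $\delta = 1 - (d+2)/p$) and $C>0$ such that
\[
\|v\|_{\mathcal{C}^\delta(\bar{Q'})} + \|\nabla v\|_{\mathcal{C}^\delta(\bar{Q'},\R^d)} \leq C\,\|v\|_{W^{2,1,p}(Q')}.
\]
This is a Morrey-type refinement of Lemma \ref{lem:w21q} and is the content of the embedding $W^{2,1,p}(Q') \hookrightarrow \mathcal{C}^{1+\delta,(1+\delta)/2}(\bar{Q'})$; see [LSU, Ch.~II, Lemma 3.3] and the ensuing corollaries. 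The critical fact is that $p>d+2$ places us above the threshold where Lemma \ref{lem:w21q} already forces $\nabla v$ to be bounded ($q'' = \infty$); a standard iteration of integral H\"older-type estimates (essentially Campanato's characterization adapted to parabolic scaling) then upgrades boundedness to H\"older continuity for both $v$ and $\nabla v$.

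Finally, since $\varphi \equiv 1$ on a neighborhood of $\bar{\T^d}$, we have $v = u$ and $\nabla v = \nabla u$ on $Q$, so
\[
\|u\|_{\mathcal{C}^\delta(Q)} + \|\nabla u\|_{\mathcal{C}^\delta(Q,\R^d)} \leq \|v\|_{\mathcal{C}^\delta(\bar{Q'})} + \|\nabla v\|_{\mathcal{C}^\delta(\bar{Q'},\R^d)},
\]
and chaining the previous two inequalities yields the claim.

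The only genuine difficulty is pinning down the H\"older refinement of Lemma \ref{lem:w21q}: the $L^{q'}$, $L^{q''}$ estimates recalled earlier give only integrability, whereas here one needs the stronger $\mathcal{C}^{\delta}$ bounds. This is standard once $p > d+2$, but it does require citing (or establishing) the appropriate parabolic Morrey embedding, which is the real content of the lemma. Everything else is bookkeeping with the cutoff.
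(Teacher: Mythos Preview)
Your proposal is correct and aligns with the paper's approach: the paper's proof is simply the one-line citation ``See \cite[Lemma II.3.3, page 80 and Corollary, page 342]{LSU},'' i.e.\ it defers entirely to the parabolic Morrey embedding in Lady\v{z}enskaja--Solonnikov--Ural'ceva. Your version spells out the torus-to-Euclidean reduction via the cutoff $\varphi$ (consistent with the framework already set up in the appendix) before invoking the same LSU result, which is a reasonable elaboration but not a different argument.
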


\begin{proof}
See \cite[Lemma II.3.3, page 80 and Corollary, page 342]{LSU}.
\qed \end{proof}

  \begin{theorem}
    \label{theo:holder_reg_classical}
    Let $p>d+2$.
    For all $\alpha \in (0,1)$, for all $R>0$, there exist $\beta \in (0,1)$ and $C>0$ such that for all $u_0 \in \mathcal{C}^{2+ \alpha}(\statespace)$, $b \in \mathcal{C}^{\alpha,\alpha/2}(Q,\R^d)$, $c \in \mathcal{C}^{\alpha,\alpha/2}(Q)$ and $h \in \mathcal{C}^{\alpha,\alpha/2}(Q)$ satisfying
\begin{equation*}
\| u_0 \|_{\mathcal{C}^{2+ \alpha}(\statespace)} \leq R, \ \
\| b \|_{\mathcal{C}^{\alpha,\alpha/2}(Q,\R^d)} \leq R, \ \
\| c \|_{\mathcal{C}^{\alpha,\alpha/2}(Q)} \leq R, \ \ \text{and} \ \
\| h \|_{\mathcal{C}^{\alpha,\alpha/2}(Q)} \leq R,
\end{equation*}
the solution to \eqref{eq:parabolic} lies in $\mathcal{C}^{2+\beta,1+\beta/2}(Q)$ and satisfies
$\| u \|_{\mathcal{C}^{2+\beta,1+\beta/2}(Q)} \leq C$.
\end{theorem}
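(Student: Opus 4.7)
The strategy is to bootstrap from $L^p$-maximal regularity to classical H\"older regularity, and then invoke a classical parabolic Schauder estimate on a bounded reference domain via the same cutoff construction already used in the appendix.

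First, since $\mathcal{C}^{2+\alpha}(\statespace)$ embeds continuously into $W^{2-2/p,p}(\statespace)$ and $\mathcal{C}^{\alpha,\alpha/2}(Q)\hookrightarrow L^\infty(Q)\hookrightarrow L^p(Q)$, the hypotheses of Theorem~\ref{theo:max_reg1} are satisfied with an enlarged constant depending only on $R$. Hence $u\in W^{2,1,p}(Q)$ with $\|u\|_{W^{2,1,p}(Q)}\leq C(R)$. Lemma~\ref{lemma:max_reg_embedding} then yields some $\delta\in(0,1)$, depending only on $p$ and $d$, such that
\begin{equation*}
\|u\|_{\mathcal{C}^\delta(Q)}+\|\nabla u\|_{\mathcal{C}^\delta(Q,\R^d)}\leq C(R).
\end{equation*}
Set $\beta:=\min(\alpha,\delta)\in(0,1)$.

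Second, following the appendix construction, let $\varphi\in C^\infty(\R^d)$ equal $1$ on a neighborhood of $\statespace$ and with compact support in $\Omega=B(0,2)$, and set $v:=u\varphi$ on $Q'=\Omega\times(0,T)$. Then $v$ satisfies
\begin{equation*}
\partial_t v - \sigma\Delta v + \langle b,\nabla v\rangle + cv = h[u] \quad\text{on }Q',\qquad v(\cdot,0)=\varphi u_0 \text{ on }\Omega,
\end{equation*}
with homogeneous Dirichlet (in particular Neumann) boundary condition on $\partial\Omega\times(0,T)$ since $v\equiv 0$ in a neighborhood of the lateral boundary. The right-hand side
\begin{equation*}
h[u]=h\varphi - 2\sigma\langle\nabla\varphi,\nabla u\rangle -\sigma u\Delta\varphi + \langle b,\nabla\varphi\rangle u
\end{equation*}
is bounded in $\mathcal{C}^{\beta,\beta/2}(Q')$ by a constant depending only on $R$, since each factor is either Hölder of exponent $\alpha$ (for $h, b, \varphi$ and its derivatives) or $\beta$ (for $u$ and $\nabla u$). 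The coefficients $b$ and $c$ remain in $\mathcal{C}^{\beta,\beta/2}(Q')$ with norms controlled by $R$, and the initial datum $\varphi u_0$ lies in $\mathcal{C}^{2+\beta}(\overline\Omega)$ with analogous bound. Because $v$ vanishes near the lateral boundary, every compatibility condition at $t=0$ is trivially satisfied.

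Third, apply the classical linear parabolic Schauder estimate \cite[Theorem~IV.5.3]{LSU} on the smooth bounded domain $\Omega$ with homogeneous Neumann boundary condition: this yields $v\in\mathcal{C}^{2+\beta,1+\beta/2}(Q')$ with $\|v\|_{\mathcal{C}^{2+\beta,1+\beta/2}(Q')}\leq C$, the constant depending only on $R$, $\Omega$, $\beta$ and $\sigma$. Since $u=v$ on $Q$, the conclusion $\|u\|_{\mathcal{C}^{2+\beta,1+\beta/2}(Q)}\leq C$ follows.

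The main subtlety is the bootstrap step that provides the Hölder regularity of $u$ and $\nabla u$ needed to ensure $h[u]\in\mathcal{C}^{\beta,\beta/2}(Q')$; without it, we could not directly feed the equation into the classical Schauder estimate. The other potentially delicate point, namely compatibility of the initial and boundary data for the cutoff problem, is handled for free because $v$ identically vanishes near the lateral boundary, so no interior-to-boundary matching is ever required.
\qed
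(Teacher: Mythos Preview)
Your proposal is correct and follows essentially the same route as the paper's proof: both bootstrap via Theorem~\ref{theo:max_reg1} and Lemma~\ref{lemma:max_reg_embedding} to obtain $u,\nabla u\in\mathcal{C}^\delta(Q)$, set $\beta=\min(\alpha,\delta)$, observe that $h[u]$ is then $\mathcal{C}^{\beta,\beta/2}$ with a uniform bound, apply the classical Schauder estimate from \cite{LSU} to the cutoff function $v$, and conclude since $u=v$ on $Q$. The only cosmetic difference is that the paper cites \cite[Theorem~IV.5.1]{LSU} rather than IV.5.3, and is terser about the cutoff/compatibility details that you spell out.
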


\begin{proof}
In the proof, $C$ denotes constants
    that depend only on $\alpha$ and $R$.
Combining Theorem \ref{theo:max_reg1}
and Lemma \ref{lemma:max_reg_embedding},
  we obtain that $h[u]$ is H\"older continuous,  
    with exponent $\beta:=\min(\delta,\alpha)$
    (where $\delta$ is given by 
    Lemma \ref{lemma:max_reg_embedding};
    we use the fact that a product of H\"older functions is H\"older,
    with exponent equal to the minimum exponent),
    and $\| h[u]\|_{\mathcal{C}^{\beta,\beta/2}(Q)} \leq C.$
    By \cite[Theorem IV.5.1, page 320]{LSU},
        $\| v \|_{\mathcal{C}^{2+\beta,1+\beta/2}(Q)} \leq C$.
  Since $u$ and $v$ coincide on
  $\statespace$, the conclusion follows.
\qed \end{proof}

 \end{document}